\documentclass[11pt]{amsart}
\usepackage{amssymb,amsmath,amsthm,enumitem,colonequals,mlmodern,tikz-cd,microtype}
\usepackage[cal=euler,bfcal,bb=px,bfbb]{mathalpha}
\usepackage{epsfig}
\usetikzlibrary{positioning}
\usepackage{url}
\usepackage[colorlinks=true,citecolor=blue,urlcolor=blue,linkcolor=blue]{hyperref}
\usepackage{setspace}
\usepackage{fancyhdr}
\usepackage{pdfpages}
\usepackage{color}
\usepackage{verbatim}
\usepackage{fancyhdr}
\usepackage[top=2.5cm, bottom=2.5cm, left=2cm, right=2cm]{geometry}
\usepackage{xcolor}
\colorlet{darkblue}{blue!55!black}
\colorlet{darkcyan}{cyan!50!black}
\colorlet{darkgreen}{green!60!black} 
\PassOptionsToPackage{hyphens}{url}
\usepackage{hyperref}
\hypersetup{
    colorlinks=true,
    linkcolor=darkblue,
    urlcolor=darkcyan,
    citecolor=darkgreen,
}
\def\eqref#1{\textcolor{darkblue}{(\ref{#1})}}
\usepackage[nameinlink]{cleveref} 
\Crefformat{section}{#2\S#1#3}
\Crefmultiformat{section}{#2\S\S#1#3}{ and~#2#1#3}{, #2#1#3}{, and~#2#1#3}
\usepackage[pagewise]{lineno}
\let\oldequation\equation
\let\oldendequation\endequation
\renewenvironment{equation}{\linenomathNonumbers\oldequation}{\oldendequation\endlinenomath}
\expandafter\let\expandafter\oldequationstar\csname equation*\endcsname
\expandafter\let\expandafter\oldendequationstar\csname endequation*\endcsname
\renewenvironment{equation*}{\linenomathNonumbers\oldequationstar}{\oldendequationstar\endlinenomath}
\let\oldalign\align
\let\oldendalign\endalign
\renewenvironment{align}{\linenomathNonumbers\oldalign}{\oldendalign\endlinenomath}
\expandafter\let\expandafter\oldalignstar\csname align*\endcsname
\expandafter\let\expandafter\oldendalignstar\csname endalign*\endcsname
\renewenvironment{align*}{\linenomathNonumbers\oldalignstar}{\oldendalignstar\endlinenomath}

\newcommand {\Hom} {\mathsf{Hom}}

\newcommand  {\Bun}   {\mathrm{Bun}}

\theoremstyle{plain}
\newtheorem{theorem}{Theorem}[section]

\newtheorem{corollary}[theorem]{Corollary}
\newtheorem{proposition}[theorem]{Proposition}
\theoremstyle{definition}
\newtheorem{definition}[theorem]{Definition}
\newtheorem{example}[theorem]{Example}
\newtheorem{remark}[theorem]{Remark}

\newtheorem{construction}[theorem]{Construction}

\newtheorem*{ack}{Acknowledgments}
\AddToHook{env/conjecture/begin}{\crefalias{theorem}{conjecture}}
\AddToHook{env/lemma/begin}{\crefalias{theorem}{lemma}}
\AddToHook{env/corollary/begin}{\crefalias{theorem}{corollary}}
\AddToHook{env/proposition/begin}{\crefalias{theorem}{proposition}}
\AddToHook{env/definition/begin}{\crefalias{theorem}{definition}}
\AddToHook{env/remark/begin}{\crefalias{theorem}{remark}}
\AddToHook{env/example/begin}{\crefalias{theorem}{example}}
\AddToHook{env/hypothesis/begin}{\crefalias{theorem}{hypothesis}}
\AddToHook{env/notation/begin}{\crefalias{theorem}{notation}}
\numberwithin{equation}{section}
\numberwithin{theorem}{section}

\title[Geometry of a semistable degeneration of moduli of vector bundles on curves]{Geometry of a semistable degeneration of moduli of vector bundles on curves}
\author{Sourav Das}
\begin{document}
\maketitle
\begin{abstract}
We study the Gieseker moduli space $\mathcal{M}_{X_0}$ of stable rank-$2$, degree-$1$ vector
bundles on an irreducible curve $X_0$ with one node. Its normalization is a two-step blow-up of
the moduli space of generalized parabolic bundles on the normalization $\tilde X_0$, and
$\mathcal{M}_{X_0}$ is recovered by gluing along a natural involution on the boundary. From this
we compute $\operatorname{Pic}(\mathcal{M}_{X_0})$ and, via a Harder--Narasimhan stratification,
the virtual Poincar\'e polynomial of $\mathcal{M}_{X_0}$. Since the normalization morphism is 
small, the intersection cohomology of $\mathcal{M}_{X_0}$ equals the ordinary cohomology of the
smooth normalization, which gives a closed formula in terms of the arithmetic genus.
\end{abstract}

\tableofcontents

\section{Introduction}

The moduli spaces of holomorphic vector bundles and Higgs bundles on compact Riemann surfaces (or, equivalently, on smooth projective algebraic curves) form one of the most important and well-studied classes of moduli spaces in algebraic geometry. These spaces lie at the crossroads of several major themes in modern mathematics: they provide concrete geometric realizations of character varieties, serve as the setting for the non-abelian Hodge correspondence, and play a central role in the geometric Langlands program. When the underlying curve is smooth, the geometry of these moduli spaces is relatively well understood. For classical groups, their cohomology rings, Picard groups, and (in the Higgs case) the structure of the Hitchin fibration have been computed in many cases, and deep connections with mirror symmetry and integrable systems have been established.

A fundamental and largely open direction is to understand how these moduli spaces behave when the underlying curve degenerates. In particular, one would like to understand the geometry of moduli spaces of bundles and Higgs bundles on singular curves, and more generally, how these spaces vary in families when the curve acquires nodes or other singularities. Such questions are not only of intrinsic geometric interest but are also motivated by arithmetic geometry, degeneration techniques in mirror symmetry, and the desire to extend the geometric Langlands correspondence to more general settings.

In this paper we begin a systematic investigation of this problem by studying the semistable degeneration of the moduli space of vector bundles when the curve becomes nodal. Concretely, we consider an irreducible projective nodal curve \(X_0\) with exactly one node and study the Gieseker moduli space \(\mathcal{M}_{X_0} = \mathcal{G}_{X_0}(2,1)\) of stable rank-2, degree-1 Gieseker vector bundles on \(X_0\). This space arises naturally as a compactification of the moduli space of stable vector bundles on \(X_0\) and can be viewed as a semistable degeneration of the usual moduli space of vector bundles on smooth curves. 

Our approach is based on the observation, developed in the ICTP notes on H-bundles and generalised parabolic bundles and degenerations, that the Gieseker moduli space admits a natural description in terms of generalized parabolic bundles (GPBs) on the normalization \(\tilde{X}_0\) of the nodal curve. This perspective reduces many questions about the singular moduli space to questions on smooth varieties: the normalization of \(\mathcal{M}_{X_0}\) is realized concretely as an explicit two-step blow-up of the moduli spaces of GPBs, and the singular space is recovered by gluing the resulting boundary divisor to itself along a natural involution. This reduction allows us to bring powerful tools from algebraic geometry to bear on \(\mathcal{M}_{X_0}\), including intersection theory on blow-ups, Mayer--Vietoris type descent for line bundles, Harder--Narasimhan stratifications.

Using this framework we obtain three main results. First, we compute the Picard group of \(\mathcal{M}_{X_0}\) by determining exactly which line bundles on the smooth normalization are invariant under the gluing involution, and therefore descend to the singular space. Second, we compute the virtual Poincaré polynomial of the moduli stack/space of Gieseker bundles, combining a Whitney stratification with a Harder--Narasimhan stratification on the associated stack/space of torsion-free sheaves. Third since the normalization morphism \(\pi: Y \to \mathcal{M}_{X_0}\) is small in the sense of Goresky--MacPherson; the Decomposition Theorem collapses to its simplest possible case, with the pushforward of the constant sheaf on \(Y\) computing the intersection complex of \(\mathcal{M}_{X_0}\) exactly, with no correction terms supported on the singular strata. This gives a closed-form expression for the intersection Poincaré polynomial of \(\mathcal{M}_{X_0}\) directly in terms of the arithmetic genus of \(X_0\).

The main results of this paper are the following three theorems. Throughout, \(g := g(X_0)\) denotes the arithmetic genus of \(X_0\), so that the normalization \(\tilde{X}_0\) has genus \(g-1\).

\

\begin{theorem}
The Picard group of $\mathcal{M}_{X_0}$ is the subgroup of $\operatorname{Pic}(\widetilde{\mathcal M_{{\operatorname{GPB}}}})$ generated by the following two types of line bundles:
\[
\theta_{\widetilde{G}}, \text{and}~(L + b(\delta + \varepsilon_1 + \varepsilon_2)),
\]
where $L \in \mathcal Pic^1(\tilde{X}_0)$ and $b \in \mathbb{Z}$ satisfy
\[
t_{x^- - x^+}^*(L) - L = b(\ell_2 - \ell_1).
\]
\end{theorem}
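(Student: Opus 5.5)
The plan is to use the description of $\mathcal{M}_{X_0}$ as the quotient of its smooth normalization $Y=\widetilde{\mathcal M_{\operatorname{GPB}}}$ obtained by identifying the boundary divisor with itself via the involution. Line bundles on $\mathcal{M}_{X_0}$ are then computed by Mayer--Vietoris descent for the seminormal gluing. Write $\pi\colon Y\to\mathcal{M}_{X_0}$ for the normalization and $D\subset Y$ for the boundary divisor. After the two blow-ups, $D$ has two components $D_1,D_2$ exchanged by the involution $\sigma\colon D_1\xrightarrow{\sim}D_2$. The gluing is the pushout of $D_1\sqcup D_2 \rightrightarrows Y$ along $D_1\to D$. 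This gives an exact sequence of Picard groups
\[
0\to H^0(\mathcal O^*)\text{-terms}\to \operatorname{Pic}(\mathcal{M}_{X_0})\xrightarrow{\pi^*}\operatorname{Pic}(Y)\xrightarrow{\;\iota_1^*-\sigma^*\iota_2^*\;}\operatorname{Pic}(D_1).
\]
The units on the projective connected pieces are constants, so the left term vanishes. It follows that $\operatorname{Pic}(\mathcal{M}_{X_0})$ is exactly the subgroup of $M\in\operatorname{Pic}(Y)$ with $M|_{D_1}\cong\sigma^*(M|_{D_2})$.

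The second step is to write down $\operatorname{Pic}(Y)$ explicitly. The moduli space $\mathcal M_{\operatorname{GPB}}$ of GPBs is a Grassmannian bundle (here a $\mathbb P^3$-type bundle) over the moduli of rank-$2$ bundles on $\tilde X_0$ of the appropriate degree. Its Picard group is therefore generated by the theta/generalized theta class, the pullback of $\operatorname{Pic}$ of the Jacobian via the determinant (which I identify with $L\in\mathcal Pic^1(\tilde X_0)$), and the tautological class of the Grassmannian fibre. Each blow-up adds one exceptional class, giving $\delta$ and $\varepsilon_1,\varepsilon_2$ (or whatever basis the paper's construction fixes). Rewriting the fibre class through $\theta_{\widetilde G}$ yields a basis
\[
\operatorname{Pic}(Y)=\mathbb Z\theta_{\widetilde G}\oplus \pi_{\det}^*\operatorname{Pic}(J)\oplus\mathbb Z\delta\oplus\mathbb Z\varepsilon_1\oplus\mathbb Z\varepsilon_2 .
\]

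The third step is to restrict each generator to $D_1,D_2$ and compute $\sigma^*$. The class $\theta_{\widetilde G}$ is the pullback of the generalized theta bundle coming from the Gieseker moduli itself, so it descends automatically. For the determinant classes I use the modular meaning of the involution. On the boundary, the GPB data on $D_1$ and $D_2$ correspond to the same Gieseker bundle, with determinants differing by twisting by $\mathcal O(x^--x^+)$. Hence $\sigma^*$ acts on $L|_{D_2}$ as the translation $t_{x^--x^+}^*L$ restricted to $D_1$. The exceptional classes restrict to $D_i$ through the normal bundle and the tautological classes $\ell_1,\ell_2$ of the two projective-bundle structures on $D$. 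I expect that $(\delta+\varepsilon_1+\varepsilon_2)|_{D_1}$ and $\sigma^*$ of its restriction to $D_2$ differ by $\ell_1-\ell_2$, while the individual $\delta,\varepsilon_i$ fail to be compatible in ways that are independent unless they are combined.

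The descent condition for a general class $a\theta_{\widetilde G}+L+b_0\delta+b_1\varepsilon_1+b_2\varepsilon_2$ then becomes the requirement that $b_0=b_1=b_2=b$ and
\[
t_{x^--x^+}^*L-L=b(\ell_2-\ell_1)
\]
in $\operatorname{Pic}(D_1)$. This gives exactly the stated generators.

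The main obstacle is the third step. It requires an accurate computation of the restrictions of the exceptional divisors $\delta,\varepsilon_1,\varepsilon_2$ to the boundary components, together with the action of $\sigma$ on them. This needs the normal bundles arising in the two-step blow-up and an identification of $\operatorname{Pic}(D_i)$ in which $\ell_1,\ell_2$ are independent of the pulled-back Jacobian classes. I would first settle the geometry of $D$ as a $\mathbb P^1\times\mathbb P^1$-type bundle over a product of moduli spaces, so that $\operatorname{Pic}(D_1)$ splits and the comparison can be made coefficient by coefficient.
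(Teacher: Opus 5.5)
Your template is the paper's. You identify $\operatorname{Pic}(\mathcal M_{X_0})$ with the classes on $\widetilde{\mathcal M_{\operatorname{GPB}}}$ compatible with the gluing involution, note that $\theta_{\widetilde G}$ descends automatically, and reduce to $\mathcal L=L+b_1\delta+c_1\varepsilon_1+c_2\varepsilon_2$. You then compare restrictions coefficient by coefficient, using that $\iota$ acts on $\operatorname{Pic}^1(\tilde X_0)$ by $t_{x^--x^+}$.

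\textbf{The gluing locus has four components, not two.} Your model of the gluing is wrong, and this is a genuine gap. In the paper, $\pi^{-1}(\overline{M^1})$ is the whole chain $E_1\cup\overline{D_1}\cup\overline{D_2}\cup E_2$, and its normalization $\widetilde D=D_1\sqcup D_2\sqcup E_1\sqcup E_2$ has four components. The involution $\iota$ identifies $E_1$ with $E_2$ (via the twist by $\mathcal O(x^--x^+)$) as well as $D_1$ with $D_2$. So your criterion $M|_{D_1}\cong\sigma^*(M|_{D_2})$ is only necessary: descent also requires $\iota^*(\mathcal L|_{E_1})\cong\mathcal L|_{E_2}$. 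The paper checks this separately, using $\mathcal O(E_1)|_{E_1}=\mathcal O_{E_1/B}(-1)$ and $\iota^*\mathcal O_{E_1/B}(1)=\mathcal O_{E_2/B}(1)$. It gets $\iota^*(L|_{E_1})-L|_{E_2}=b_1(\iota^*\ell_2-\ell_1)$, which reduces to the same translation condition but must be verified before you can say the listed classes descend. Your generator bookkeeping is also off. $\delta$ is not an exceptional class but $p_G^*\mathcal O_{G/B}(1)=\det Q$ for the $\operatorname{Gr}_2$-bundle (not a $\mathbb P^3$-bundle), and the two blow-ups contribute only $\varepsilon_1,\varepsilon_2$.

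\textbf{The key computation is only announced.} The step that forces $b_1=c_1=c_2$ appears only as ``I expect\dots'', and in your picture, where $E_1,E_2$ are not glued, its mechanism is invisible. The paper's ingredients are as follows.
\begin{itemize}
\item $\overline{D_i}$ is a $\mathbb P^1$-bundle over $\operatorname{PBun}_{2,1}(\tilde X_0;x^+,x^-)$ with two sections, $s_0=\overline{D_1}\cap\overline{D_2}$ and $s_\infty=\overline{D_i}\cap E_i$ (primed on $D_2$). Hence $\operatorname{Pic}(D_1)=\operatorname{Pic}(\Bun)\oplus\mathbb Z[\mathbb E_{x^+}/L_+]\oplus\mathbb Z[\mathbb E_{x^-}/L_-]\oplus\mathbb Z\,\mathcal O_{D_1/\mathrm{PBun}}(1)$, and this splitting is what licenses the coefficient comparison.
\item $\varepsilon_i|_{D_i}=\mathcal O(s_\infty)$, and $\varepsilon_j|_{D_i}$ is trivial for $j\neq i$.
\item $\delta|_{D_1}$ is $\mathcal O(s_0)$ up to $\ell_2$; this comes from $\mathcal O_{\mathrm{GPB}}(D_2)=\mathcal O_{\mathrm{GPB}/\Bun}(1)\otimes(\det\mathbb E_{x^-})^{-1}$.
\item $\mathcal O(s_0)-\mathcal O(s_\infty)$ is the pullback of $\Hom(L_-,\mathbb E_{x^+}/L_+)$.
\end{itemize}
The crucial point is that $\iota\colon D_1\to D_2$ is crossed. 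It sends $\overline{D_1}\cap\overline{D_2}\subset D_1$ to $\overline{D_2}\cap E_2$, and $\overline{D_1}\cap E_1$ to $\overline{D_1}\cap\overline{D_2}\subset D_2$. Write $\mathcal L|_{D_1}=L|_{D_1}+(b_1+c_1)\mathcal O(s_\infty)+b_1\Hom(L_-,\mathbb E_{x^+}/L_+)-b_1\ell_2$, pull back by $\iota$, and compare with $\mathcal L|_{D_2}$. The coefficient of $\mathcal O(s'_\infty)$ gives $c_1=c_2$, and the coefficient of $\Hom(L_+,\mathbb E_{x^-}/L_-)$ gives $b_1=c_1$. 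What remains is $\iota^*(L|_{D_1})-L|_{D_2}=b_1(\iota^*\ell_2-\ell_1)$. This is the heart of the proof, and it relies exactly on the identification of $\overline{D_1}\cap\overline{D_2}$ with $E_i\cap\overline{D_i}$ that your two-component model leaves out.
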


\

For the definitions of the line bundles involved in the statement can be found in the appropriate section.

\begin{theorem}[Virtual Poincaré polynomial]
\label{thm:vpp}
Let $\mathrm{GVB}$ denote the moduli stack of Gieseker vector bundles of rank $2$ and degree $1$ on $X_0$, and write $g:=g(X_0)$. With $\mathbb{P}(A_d;t)=(1+t)^{2(g-1)}$ and $\mathbb{P}(S_d;t)=(1+t)^{2(g-1)}/(t^2-1)$, the virtual Poincaré polynomial of the Gieseker-semistable locus obtained from the Whitney stratification is
\begin{equation}
\mathbb{P}(GVB^{\mathrm{ss}};t)
= \frac{t^{2}(1+t)^{4g-7}\Bigl[(t^{4}+1)(1-t+t^{2})^{2(g-1)} - 2t^{2g}\Bigr]}{(t-1)^{3}}.
\label{eq:gvbss-v2}
\end{equation}
and
\begin{equation}
\mathbb{P}(\mathcal{M}_{X_0},t)
=(t^2-1)\,\mathbb{P}(GVB^{\mathrm{ss}};t)
= \frac{t^{2}(1+t)^{4g-6}\Bigl[(t^{4}+1)(1-t+t^{2})^{2(g-1)} - 2t^{2g}\Bigr]}{(t-1)^{2}}.
\label{eq:coarse-v2}
\end{equation}
\end{theorem}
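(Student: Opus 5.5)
The virtual Poincaré polynomial is motivic: additive over locally closed stratifications and multiplicative in Zariski-locally trivial fibrations. So I will compute $\mathbb{P}(GVB^{\mathrm{ss}};t)$ by decomposing the stack of Gieseker bundles and summing the pieces. The first step is the Whitney-type stratification of $\mathrm{GVB}$ according to the underlying curve. Bundles living on $X_0$ itself form the open stratum. Bundles on the Gieseker curves $X_k$, obtained by inserting a chain of $k$ rational curves at the node, have rank $2$, so $k\le 2$. These strata correspond, via the GPB description, to the boundary divisor of the normalization $Y=\widetilde{\mathcal M_{\mathrm{GPB}}}$ and to its self-intersection locus under the gluing involution. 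For each stratum I will write the Gieseker bundle data as a torsion-free sheaf on $X_0$ (equivalently a GPB on $\tilde X_0$) together with the extra chain data. The fibre of this extra data is an explicit torus- or $\mathbb{P}^1$-type space, so its class is a polynomial in $t^2$ (powers of $(t^2-1)$ and $t^2$).

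Next, on the stack $\mathcal{TF}$ of torsion-free sheaves (equivalently GPBs of rank $2$, degree $1$) I will run a Harder--Narasimhan recursion, in the style of Atiyah--Bott and Harder--Narasimhan. The full stack of GPBs is a $\mathrm{Gr}(2,4)$-bundle over $\mathrm{Bun}_{2,d}(\tilde X_0)$. Since $\tilde X_0$ has genus $g-1$, its class is
\[
\mathbb{P}(\mathrm{Gr}(2,4))\,\frac{(1+t)^{2(g-1)}(1+t^3)^{2(g-1)}}{(t^2-1)^2(t^2)^{\ldots}}
\]
with appropriate normalizations. The unstable strata are indexed by line-subbundle degrees $d_1>d_2$ together with the relative position of the subbundle fibre with respect to the parabolic subspace. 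Each such stratum is an affine-type bundle, of dimension computed by Riemann--Roch (an Ext$^1$ minus Hom count), over $\mathrm{Pic}^{d_1}\times\mathrm{Pic}^{d_2}$ times a cell of the Grassmannian. Here I use the given inputs $\mathbb{P}(A_d;t)=(1+t)^{2(g-1)}$ and $\mathbb{P}(S_d;t)=(1+t)^{2(g-1)}/(t^2-1)$ for the Jacobian and for the stack of line bundles. Subtracting the geometric series over the unstable strata gives the semistable torsion-free part. Since the degree is odd, semistable equals stable.

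Then I add the boundary strata contributions, each being (stable part of the corresponding torsion-free type) $\times$ (class of the chain data). After that I simplify. The factor $(1-t+t^2)^{2(g-1)}$ arises from $(1+t^3)^{2(g-1)}/(1+t)^{2(g-1)}$, and the $-2t^{2g}$ term comes from the two families of destabilizing strata, at the two branch points $x^\pm$, whose geometric series telescope. Finally, since stability is stable with generic stabilizer $\mathbb{G}_m$ acting trivially, $GVB^{\mathrm{ss}}\to\mathcal M_{X_0}$ is a $B\mathbb{G}_m$-gerbe. This gives $\mathbb{P}(\mathcal M_{X_0})=(t^2-1)\mathbb{P}(GVB^{\mathrm{ss}})$, and multiplying \eqref{eq:gvbss-v2} by $t^2-1=(t-1)(t+1)$ yields \eqref{eq:coarse-v2} immediately.

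The main obstacle is the second step: correctly identifying the HN strata of GPBs, where the relative position of the parabolic datum changes the stability threshold. It also requires getting the codimensions right so that the infinite sums converge to closed form and the boundary strata of the Gieseker stack are not double counted. I would first check the bookkeeping in low genus ($g=2$), comparing against the Euler characteristic and against the smooth normalization $Y$ via the small map.
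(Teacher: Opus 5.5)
Your proposal has a genuine gap. It stays an outline that never reaches the closed form, and its one concrete structural choice is wrong.

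\textbf{The stack you run the recursion on.} You treat the torsion-free stack and the GPB stack as the same (``$\mathcal{TF}$, equivalently GPBs''). You then run the Harder--Narasimhan recursion on the Grassmannian bundle $\operatorname{Gr}_2(\mathbb E_{x^+}\oplus\mathbb E_{x^-})$, with strata over $\mathrm{Pic}^{d_1}\times\mathrm{Pic}^{d_2}$ times Grassmannian cells. But $\mathrm{GPB}\to\mathrm{TF}$ is not an isomorphism. It is two-to-one over sheaves of local type $\mathcal O\oplus\mathfrak m$ (the generic points of $D_1$ and $D_2$, which $\iota$ identifies), and it has positive-dimensional fibres over type $\mathfrak m\oplus\mathfrak m$. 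The total classes already differ. Writing $B(t)=\frac{(1+t)^{2(g-1)}(1+t^3)^{2(g-1)}}{(t^2-1)^2(t^4-1)}$, they are:
\begin{itemize}
\item $(1+t^2+2t^4+t^6+t^8)B(t)$ for GPB;
\item $t^8B(t)$ for TF (the paper's $\Pi(t)$);
\item $t^2(1+t^2)(1+t^4)B(t)$ for $\mathrm{GVB}$.
\end{itemize}
So your recursion computes the semistable part of the wrong stack. Your later step ``add boundary strata $\times$ chain data'' cannot repair this without explicitly undoing the double counting created by $\iota$. That is exactly the obstacle you flag at the end and leave unresolved.

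\textbf{The missing ingredient.} What you need is the structure of $\theta\colon\mathrm{GVB}\to\mathrm{TF}$. It is an isomorphism over every torsion-free sheaf except those of local type $\mathfrak m\oplus\mathfrak m$. Over that locus, $TF_2\cong\operatorname{Bun}_{2,-1}(\tilde X_0)$, it is the $\mathbb P^3$-bundle $\mathbb P\operatorname{Hom}(\mathcal E_{x^+},\mathcal E_{x^-})$, not a ``torus- or $\mathbb P^1$-type'' fibre. With this, the paper proceeds in three steps.
\begin{enumerate}
\item It gets $\mathbb P(GVB;t)$ from the three Whitney strata: a $\mathrm{GL}_2$-fibration over $\operatorname{Bun}_{2,1}$, a $\mathbb C^*$-bundle over the parabolic stack, and a $\mathbb P^3$-bundle over $\operatorname{Bun}_{2,-1}$.
\item It pulls back the HN strata of TF. These live over $TF_{1,d_1}\times TF_{1,1-d_1}$, and $A_d$, $S_d$ are the locally free and non-locally-free parts of $TF_{1,d}$, not the Jacobian and line-bundle stack of $\tilde X_0$. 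The fibre dimension is $g-2d_1$, jumping to $g-2d_1+1$ when both graded pieces are non-locally-free.
\item It splits that last case by the local type of the extension, via $\rho\colon\operatorname{Ext}^1\to\mathcal{E}xt^1_{\mathrm{node}}\cong k^2$, because only $\rho^{-1}(0,0)$ acquires the factor $\mathbb P(\mathbb P^3;t)$.
\end{enumerate}

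\textbf{Where $-2t^{2g}$ comes from.} Write $S=\mathbb P(S_d;t)$. The five HN pieces sum to $2(t^2+1)t^{2(g-2d_1+1)}S^2$, and $\sum_{d_1\ge1}t^{2(g-2d_1+1)}=t^{2g+2}/(t^4-1)$ cancels the factor $t^2+1$. That is the source of the $-2t^{2g}$ term, not two branch-point families telescoping.

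\textbf{How to rescue your route.} Your ``semistable part plus boundary correction'' shape does work if carried out on TF instead of GPB. Take $P(TF^{ss})=t^8B-\frac{t^{2g}(t^4+t^2-1)}{t^4-1}S^2$ and add $(t^2+t^4+t^6)P(\operatorname{Bun}^{ss}_{2,-1})$. This is the paper's alternate cross-check. It agrees with the main formula because $(t^4+t^2-1)+(1+t^2+t^4)=2t^2(t^2+1)$.

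Your final gerbe step, $\mathbb P(\mathcal M_{X_0})=(t^2-1)\,\mathbb P(GVB^{\mathrm{ss}})$, is the same as the paper's.
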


\begin{theorem}[Intersection Poincar\'e Polynomial of $\mathcal M_{X_0}$, closed form]
Let $\mathcal M_{X_0}$ be the Gieseker moduli space of stable rank-2, degree-1 bundles on the irreducible nodal curve $X_0$, and write $g := g(X_0)$ for its arithmetic genus, so that $g(\tilde X_0) = g-1$. Then
\[
\resizebox{0.98\linewidth}{!}{$
\displaystyle
P_{IH}(\mathcal M_{X_0}, t)
= \frac{(1+t)^{2(g-1)}\bigl((1+t^3)^{2(g-1)}-t^{2(g-1)}(1+t)^{2(g-1)}\bigr)}{(1-t^2)(1-t^4)}
\times \bigl(1+3t^2+4t^4+3t^6+t^8\bigr)
+ \bigl(t^{2(g-1)}+t^{2g}\bigr)(1+t)^{4(g-1)}
$}
\]
\end{theorem}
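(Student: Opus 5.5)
The plan is to compute $P_{IH}(\mathcal M_{X_0},t)$ as the ordinary Poincaré polynomial of the normalization $Y$ and then evaluate it through the blow-up description. Since $\pi\colon Y\to\mathcal M_{X_0}$ is finite and small and $Y$ is smooth, $R\pi_*\mathbb{Q}_Y[\dim Y]$ is the intersection complex of $\mathcal M_{X_0}$. Taking hypercohomology gives $IH^*(\mathcal M_{X_0})\cong H^*(Y)$, so $P_{IH}(\mathcal M_{X_0},t)=P(Y,t)$. Smallness is immediate because the fibres of $\pi$ are finite: $\pi$ is an isomorphism off the glued boundary divisor and $2{:}1$ over its image, which has codimension one. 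So the small condition $\dim\{y:\dim\pi^{-1}(y)\ge r\}<\dim Y-2r$ holds vacuously for $r\ge 1$.

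To compute $P(Y,t)$ I will use that $Y=\widetilde{\mathcal M_{\operatorname{GPB}}}$ is a two-step blow-up of the smooth moduli space $\mathcal M_{\operatorname{GPB}}$ of generalized parabolic bundles on $\tilde X_0$, along smooth centres. For each step I apply the blow-up formula
\[
P(\mathrm{Bl}_Z M,t)=P(M,t)+P(Z,t)\,(t^2+t^4+\cdots+t^{2(c-1)}),
\]
where $c$ is the codimension of $Z$.

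For $P(\mathcal M_{\operatorname{GPB}},t)$ I will use the fibration structure over the moduli of bundles on $\tilde X_0$. The GPB moduli space is, over the stable locus, a $\mathrm{Gr}(2,4)$-bundle: the parabolic datum is a $2$-dimensional quotient of $E_{x^+}\oplus E_{x^-}$. Equivalently, I can run the Atiyah--Bott/Kirwan equivariantly perfect HN stratification on GPBs of rank $2$ and degree $1$ on a curve of genus $g-1$. The factor $1+3t^2+4t^4+3t^6+t^8$ is exactly $P(\mathrm{Gr}(2,4),t)$. The first summand,
\[
\frac{(1+t)^{2(g-1)}\bigl((1+t^3)^{2(g-1)}-t^{2(g-1)}(1+t)^{2(g-1)}\bigr)}{(1-t^2)(1-t^4)},
\]
is the classical Harder--Narasimhan/Atiyah--Bott formula for $P(\mathcal N_{\tilde X_0}(2,1),t)$. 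So the main term is $P(\mathcal N_{\tilde X_0}(2,1))\cdot P(\mathrm{Gr}(2,4))$, by Leray--Hirsch applied to the tautological classes of the Grassmannian bundle.

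The correction term $(t^{2(g-1)}+t^{2g})(1+t)^{4(g-1)}$ should come from the two blow-ups. I will then identify the centres:
\begin{itemize}
\item the first centre, over which the GPB degenerates (the quotient fails to be generic), should have cohomology $(1+t)^{4(g-1)}$, roughly a product of two Jacobian-type factors $\mathrm{Pic}(\tilde X_0)$, or a projective bundle over them;
\item the codimension and the fibre types then produce the shifts $t^{2(g-1)}$ and $t^{2g}$.
\end{itemize}

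The main obstacle is this bookkeeping. I need the exact centres, their codimensions, and their Poincaré polynomials, including how the proper transform of the second centre sits after the first blow-up, so that the contributions collapse to precisely $(t^{2(g-1)}+t^{2g})(1+t)^{4(g-1)}$. I also need the Grassmannian-bundle description to be a Zariski-locally trivial (or cohomologically trivial) fibration globally on the GPB moduli space, not just over the stable locus.

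I will try first to check this against the virtual Poincaré polynomial of Theorem~\ref{thm:vpp}. I will compare the E-polynomial obtained by adding the boundary correction, using the $2{:}1$ gluing, to $P(Y,t)$; since $Y$ is smooth and projective, its virtual and ordinary Poincaré polynomials agree. I will also check low genus ($g=2$) by hand.
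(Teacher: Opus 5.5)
Your first step is the paper's: $\pi$ is finite, hence vacuously small, so $R\pi_*\mathbb{Q}_Y[n]\cong IC_{\mathcal M_{X_0}}$ and $P_{IH}(\mathcal M_{X_0},t)=P(Y,t)$. The gap is in the computation of $P(Y,t)$. Two identifications there are wrong, and because of them the bookkeeping you flag as the main obstacle cannot be closed as planned.

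\emph{The Grassmannian factor.} $1+3t^2+4t^4+3t^6+t^8$ is not $P(\mathrm{Gr}(2,4),t)$, which equals $1+t^2+2t^4+t^6+t^8$. The difference, $2(t^2+t^4+t^6)$, is exactly what blowing up two points of the fourfold $\mathrm{Gr}(2,4)$ adds. The factor is therefore $P(\mathrm{KGL}_2,t)$, whose fibre is $\mathrm{Gr}(2,4)$ blown up at $\mathbb C^2\oplus 0$ and $0\oplus\mathbb C^2$.

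\emph{The blow-up centres.} They are not Jacobian-type. $W_1$ and $W_2$ have codimension $4$ and map isomorphically to $\mathcal M_{\mathrm{VB}}$, so each blow-up contributes $(t^2+t^4+t^6)\,P(\mathcal M_{\mathrm{VB}},t)$. Hence ``Grassmannian bundle plus two blow-ups'' produces exactly the first summand $P(\mathcal M_{\mathrm{VB}},t)\,(1+3t^2+4t^4+3t^6+t^8)$ and nothing more. The term $(t^{2(g-1)}+t^{2g})(1+t)^{4(g-1)}$ cannot come from the blow-ups.

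\emph{The missing ingredient is the variation of GIT in $\alpha$.} The space that is blown up to give $Y$ is the moduli of $1$-stable GPBs, and this is \emph{not} a Grassmannian bundle over $\mathcal M_{\mathrm{VB}}$. It does not even map to $\mathcal M_{\mathrm{VB}}$: a $1$-stable pair may have unstable $E$, and a stable $E$ with special $S$ may be $1$-unstable. So the global triviality you listed as something to check actually fails, and that failure is where the correction term lives.

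The Grassmannian bundle is the small-$\alpha$ chamber. Crossing the single wall $\alpha=\tfrac12$ is a flip, which removes $\Delta_1$ and inserts $\Delta_2$:
\begin{itemize}
\item[(i)] $\Delta_1$ consists of pairs with $E$ stable containing a degree-$0$ line subbundle $F$ with $S=F_{x^+}\oplus F_{x^-}$. It is a $\mathbb P^{g-2}$-bundle over $J_0\times J_1$, with fibre $\mathbb P H^1((E/F)^{-1}\otimes F)$.
\item[(ii)] $\Delta_2$ consists of pairs with $E$ unstable containing a degree-$1$ line subbundle $F$ with $S\cap(F_{x^+}\oplus F_{x^-})=0$. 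It is a $\mathbb P^{g}$-bundle over $J_0\times J_1$, with fibre the projectivized GPB extension space, of dimension $g+1$.
\end{itemize}
Both chambers blow up to a common space with exceptional divisor $\Delta_1\times_{J_0\times J_1}\Delta_2$. Hence the Poincar\'e polynomial changes by $P(\Delta_2,t)-P(\Delta_1,t)=(t^{2(g-1)}+t^{2g})(1+t)^{4(g-1)}$.

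Finally, the centres $W_1,W_2$ are disjoint from $\Delta_1,\Delta_2$, so the flip and the blow-ups do not interact. Putting the pieces together gives the paper's decomposition $P(Y,t)=P(\mathcal M_{\mathrm{VB}},t)\,P(\mathrm{KGL}_2,t)+P(\Delta_2,t)-P(\Delta_1,t)$, which is the stated formula.
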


\

The paper is organized as follows. In \S2 we prove the Picard group theorem. \S3 is devoted to the virtual Poincaré polynomial, via the Whitney and Harder--Narasimhan stratifications. In \S4 we show that the normalization morphism is small, apply the Decomposition Theorem to obtain the intersection Poincaré polynomial without correction terms, and make the resulting formula fully explicit via GIT-variation.

\subsubsection{Future Projects}

The present paper treats rank $2$, degree $1$ bundles on an irreducible nodal curve with a single node, but the methods are built to generalize. This is the first step in a broader program aimed at the geometry of semistable degenerations of moduli spaces of vector and Higgs bundles. The Mayer--Vietoris approach used here for the Picard group is intended to be adapted to compute the ordinary Poincar\'e polynomial and Hodge numbers of these singular moduli spaces, and these computations are intended to be extended to arbitrary rank and degree for both vector and Higgs bundles, as well as to the corresponding moduli spaces associated with arbitrary semisimple linear algebraic groups. It would be interesting to treat  the compactified moduli of semistable (Higgs) bundles on stable curves with only rational components, where the moduli space depends on a choice of polarization and so, unlike in the irreducible case treated here, is no longer unique for a fixed curve.
\

\begin{ack}
The author thanks S. Basu for helpful discussions and for suggesting the reference to the BDGG decomposition theorem. The computation in \S4 was carried out during an extension of the author's doctoral studies at the Chennai Mathematical Institute. This research was supported in part by the International Center for Theoretical Sciences (ICTS) for participating in the program Monopole Moduli 2026 (code: ICTS/MM2026/09).
\end{ack}

\begin{ack}
The author thanks Claude (Anthropic, Sonnet 5) for assistance over several rounds of interactive debugging in checking the theorem on  virtual Poincaré polynomial computations of \S\ref{thm:vpp}: symbolic cross-checking of specific algebraic identities surfaced an extra residue in the formula for $\mathbb P(GVB^{\mathrm{ss}};t)$, which was traced to an error in the stratification of $HN^2$ (the locus $HN^{2,1}$, of local type $\mathcal O\oplus\mathfrak m$ at the node, had incorrectly been grouped with $HN^{2,2}$ under the $\mathbb P^3$-bundle structure of $\theta$, rather than recognized as a further isomorphism locus); correcting this resolved the inconsistency. This assistance was limited to computational verification and identification of this specific error; it does not constitute an independent certification of the correctness of the paper's mathematical content, responsibility for which remains with the author.
\end{ack}
\

\section{Picard Group}

The Picard group of a moduli space plays a central role in understanding its geometry, intersection theory, and birational properties: it governs which line bundles exist, controls linear systems and morphisms to projective space, and often detects the failure of a compactification to be $\mathbb Q$-factorial or locally factorial at its singular points. In this section we compute the Picard group of Gieseker's moduli space $\mathcal M_{X_0}=\mathcal G_{X_0}(2,1)$ of stable rank-2, degree-1 Gieseker vector bundles on an irreducible nodal curve $X_0$ with exactly one node.

Since $\mathcal M_{X_0}$ is singular, we pass to its normalization $\widetilde{\operatorname{GPB}}_{\tilde X_0}$, a smooth variety obtained from the moduli space of generalized parabolic bundles (GPBs) on the normalization $\tilde X_0$ of $X_0$ by two explicit blow-ups. On this smooth model the divisor class group is computed by ordinary intersection theory, and $\operatorname{Pic}(\mathcal M_{X_0})$ is then recovered as the subgroup of line bundles that descend along the normalization map, concretely, the classes on $\widetilde{\operatorname{GPB}}_{\tilde X_0}$ left invariant by the natural involution that records how the normalization glues back together to reproduce the singularities. 

We describe the generators of $\operatorname{Pic}(\widetilde{\operatorname{GPB}}_{\tilde X_0})$, the pullback of the determinant bundle from $\operatorname{Bun}_{\tilde X_0}(2,1)$, the two boundary divisors $\overline{D_1},\overline{D_2}$ arising from the blow-up construction, the two exceptional divisors $E_1,E_2$, and a handful of tautological classes pulled back from $\tilde X_0$ itself  and determine exactly which combinations of them are invariant under the involution swapping $\overline{D_1}\leftrightarrow\overline{D_2}$ and $E_1\leftrightarrow E_2$. This total boundary divisor $D=\overline{D_1}\cup\overline{D_2}\cup E_1\cup E_2$, together with the symmetry the involution imposes on it, is what makes the descent computation tractable: rather than analyzing $\operatorname{Pic}(\mathcal M_{X_0})$ directly on a singular space, we reduce the problem to linear algebra on $\operatorname{Pic}(\widetilde{\operatorname{GPB}}_{\tilde X_0})$.

\subsubsection{Introduction to Gieseker's moduli space}

For a smooth projective curve, the moduli space of stable vector bundles of fixed rank and degree is already well understood; the difficulty appears the moment the curve degenerates. If $X_0$ is an irreducible curve with a single node, the naive moduli space $U_{X_0}(2,1)$ of stable bundles on $X_0$ itself is not compact: as a family of bundles on $X_0$ degenerates, the limit typically fails to be locally free at the node, and one is forced to leave the category of vector bundles altogether and pass to torsion-free sheaves. Gieseker's insight was that one can instead compactify $U_{X_0}(2,1)$ while staying entirely within the world of \emph{vector bundles}, at the cost of allowing the underlying curve itself to degenerate further  by inserting a chain of rational curves at the node and letting the bundle become non-trivial along that chain. This produces a compactification that is particularly well suited to studying bundles on irreducible nodal curves, since every boundary point still has a transparent modular meaning as an honest vector bundle on some (mildly degenerated) curve.

\subsubsection{The setting}

Let $X_0$ be an irreducible projective curve of arithmetic genus $g=p_a(X_0)$ with exactly one node $p$, and let $\nu:\tilde X_0\to X_0$ be its normalization, a smooth curve of genus $g-1$. Write $\nu^{-1}(p)=\{x^+,x^-\}$ for the two points of $\tilde X_0$ lying over the node. We fix throughout the numerical invariants rank $r=2$ and degree $d=1$; since $\gcd(r,d)=1$, every semistable bundle of this rank and degree is automatically stable, which simplifies the moduli theory considerably  there are no strictly semistable points to worry about, and correspondingly no extra automorphisms to quotient by.

\subsubsection{Gieseker curves (expanded degenerations)}

The idea behind Gieseker's compactification is to give a vector bundle "room to degenerate" by inserting an exceptional chain of rational curves at the node, rather than forcing it to become non-locally-free there. Concretely, for each integer $k\geq 0$ the \emph{Gieseker curve} $X_k$ of length $k$ is obtained from $\tilde X_0$ by attaching a chain of $k$ copies of $\mathbb{P}^1$ between the two points $x^+$ and $x^-$:
\[
X_{k} = \tilde{X}_0 \cup R_1 \cup \cdots \cup R_k,
\]
where consecutive components meet transversally at a single point, $\tilde{X}_0 \cap R_1 = \{x^+\}$, and $R_k \cap \tilde{X}_0 = \{x^-\}$ (so for $k=0$ we simply glue $x^+$ to $x^-$ and recover $X_0$ itself). Each $X_k$ comes with a contraction morphism $\pi_k: X_k \to X_0$ that collapses the entire chain $R_1\cup\cdots\cup R_k$ to the single point $p$; this is the map along which a bundle on $X_k$ degenerates back down to a (typically non-locally-free) sheaf on $X_0$. The family $\{X_k\}_{k \geq 0}$, together with these contraction maps, constitutes the expanded degeneration of $X_0$ underlying the whole construction.

\subsubsection{Gieseker (admissible) bundles}

Not every vector bundle on $X_k$ is allowed to appear as a limit: the restriction to each rational bridge $R_i$ must be neither too split (which would make that component of the chain contribute nothing) nor too unbalanced (which would push the bundle out of the stable range). This is made precise as follows. A \emph{Gieseker vector bundle} of rank 2 and degree 1 on \(X_k\) is a vector bundle \(E\) on \(X_k\) satisfying:
\begin{itemize}
    \item \(E|_{\tilde{X}_0}\) is a vector bundle of rank 2 and degree 1 on the smooth curve $\tilde X_0$;
    \item on each rational component \(R_i \cong \mathbb{P}^1\), the restriction \(E|_{R_i}\) is \emph{admissible}: it splits as \(\mathcal{O}(a) \oplus \mathcal{O}(b)\) with \(0\leq a\leq 1\), \(0\leq b \leq 1\), and $a,b$ not both zero  that is, $E|_{R_i}$ is as close to trivial as possible while still genuinely using the component $R_i$;
    \item the pushforward \(\pi_{k*}(E)\) is a torsion-free sheaf of rank 2 and degree 1 on \(X_0\).
\end{itemize}
The admissibility condition is what keeps the moduli problem finite-dimensional and well-behaved: it rules out both the redundant components (where $a=b=0$ and $R_i$ could simply be contracted) and the destabilizing ones (where the splitting type is too unbalanced for the resulting bundle to be stable).

\subsubsection{The Moduli Space \(\mathcal{M}_{X_0} = \mathcal{G}_{X_0}(2,1)\)}

The \emph{Gieseker moduli space} \(\mathcal{M}_{X_0}\) parametrizes isomorphism classes of stable Gieseker vector bundles on all of the expanded curves $X_k$ at once. For rank 2 it turns out that chains of length at most 2 already suffice to accommodate every possible degeneration, so it is enough to let $k$ range over $\{0,1,2\}$:
\begin{itemize}
    \item It is a projective scheme that compactifies the usual moduli space \(U_{X_0}(2,1)\) of stable bundles on \(X_0\).
    \item It has a natural stratification by the length \(k\) of the rational chain needed to represent a given bundle, and this stratification will reappear later as the singularity stratification $\overline{M^0}\supset\overline{M^1}\supset M^2$ of $\mathcal M_{X_0}$ itself.
    \item The boundary  the loci with positive chain length ($k\geq1$)  records successively deeper degenerations, with $\mathcal M_{X_0}$ becoming singular exactly where the chain length jumps.
\end{itemize}

\subsubsection{Connection to generalised Parabolic Bundles and Normalization}

Working directly with chains of rational curves is geometrically intuitive but might be, in some cases, computationally difficult. The key simplification, developed in the ICTP notes on generalised parabolic bundles and degenerations, is that the entire moduli problem for $\mathcal M_{X_0}$ can be re-encoded on the smooth curve $\tilde X_0$ alone, with no rational bridges in sight, at the cost of remembering a small amount of extra linear-algebra data at the two points $x^+,x^-$. This is the passage to \emph{generalized parabolic bundles} (GPBs).

The idea is as follows. A rank-2 vector bundle $E$ on $\tilde X_0$, together with a 2-dimensional subspace $S\subset E_{x^+}\oplus E_{x^-}$, determines a torsion-free sheaf $\mathcal F$ on $X_0$: namely, $\mathcal F$ is the sheaf whose sections near $p$ are exactly those sections of $\nu_*E$ whose values at $x^+$ and $x^-$ combine to lie in $S$. When $S$ meets both summands transversally  equivalently, when $S$ projects isomorphically onto each of $E_{x^+}$ and $E_{x^-}$  this simply glues $E_{x^+}$ to $E_{x^-}$ along an isomorphism, so $\mathcal F$ is again locally free and we recover an ordinary bundle on $X_0$ (the $k=0$ stratum). As $S$ degenerates away from this transverse position, $\mathcal F$ develops exactly the more complicated local structure at $p$ that, on the Gieseker side, is resolved by inserting a rational chain  so the pairs $(E,S)$ already encode, for generic points, the same data that the expanded degenerations $X_k$ encode geometrically. Making this dictionary precise, and understanding the relation between the resulting moduli space of pairs $(E,S)$ with the normalization of $\mathcal M_{X_0}$, is the content of the blow-up construction carried out below; for now we set up the relevant moduli stacks before working with the corresponding moduli spaces.

\begin{definition}[Base moduli stack and space]
Let \(\operatorname{Bun}_{\tilde{X}_0}\) denote the moduli stack of rank-2, degree-1 vector
bundles on the smooth curve \(\tilde{X}_0\), and let \(\mathbb{E}\) be the universal bundle on
\(\operatorname{Bun}_{\tilde{X}_0} \times \tilde{X}_0\). We write $\mathbb E_{x^+}$ and
$\mathbb E_{x^-}$ for the restrictions of $\mathbb E$ along
$\{x^+\}\times\operatorname{Bun}_{\tilde X_0}$ and $\{x^-\}\times\operatorname{Bun}_{\tilde X_0}$
respectively  these are rank-2 vector bundles on $\operatorname{Bun}_{\tilde X_0}$, the fibers
of $\mathbb E$ over the two marked points.

Since $\operatorname{rank}=2$ and $\deg=1$ are coprime, every semistable bundle in this class is
automatically stable, so $\operatorname{Bun}_{\tilde X_0}$ has no strictly semistable points and
its coarse moduli space
\[
\mathcal M_{\operatorname{Bun}} \;=\; \mathcal M_{\tilde X_0}(2,1)
\]
is a fine moduli space: it is a smooth projective variety of dimension $4g-7$
(where $g := g(X_0)$, so $g(\tilde X_0)=g-1$), and $\operatorname{Bun}_{\tilde X_0}\to \mathcal M_{\operatorname{Bun}}$ is a
$\mathbb G_m$-gerbe, the automorphism group of a stable bundle being exactly the scalars
$\mathbb C^*\subset \operatorname{Aut}(E)$. A universal bundle
\[
\mathcal E \;\longrightarrow\; B\times \tilde X_0
\]
exists on $\mathcal M_{\operatorname{Bun}}$ itself (rather than only on the stack), and is unique up to tensoring by the
pullback of a line bundle on $\mathcal M_{\operatorname{Bun}}$; we fix one such choice once and for all. We write
$\mathcal E_{x^+}$ and $\mathcal E_{x^-}$ for its restrictions along
$\{x^+\}\times B$ and $\{x^-\}\times \mathcal M_{\operatorname{Bun}}$, the rank-2 vector bundles on $B$ obtained by evaluating
the universal family at the two marked points, so that $\mathcal E_{x^\pm}$ pulls back to
$\mathbb E_{x^\pm}$ under $\operatorname{Bun}_{\tilde X_0}\to \mathcal M_{\operatorname{Bun}}$.
\end{definition}

\begin{definition}[Moduli stack of generalized parabolic bundles (GPBs)]
Consider the Grassmannian bundle over \(\operatorname{Bun}_{\tilde{X}_0}\)
\[
\mathrm{GPB} := \operatorname{Gr}_2\bigl( \mathbb{E}_{x^+} \oplus \mathbb{E}_{x^-} \bigr) \longrightarrow \operatorname{Bun}_{\tilde{X}_0},
\]
whose fiber over a bundle $E$ is the Grassmannian of 2-dimensional subspaces $S\subset E_{x^+}\oplus E_{x^-}$. A point of \(\mathrm{GPB}\) is thus a pair $(E,S)$: a rank 2, degree 1 vector bundle on \(\tilde{X}_0\) together with a \emph{generalized parabolic structure} $S$ at the two marked points \(x^+\) and \(x^-\), in the sense described above.
\end{definition}

\begin{definition}[$\alpha$-(semi)stability of GPBs]\label{def:alpha-stab}
Let $(E,S)$ be a generalized parabolic bundle on $\tilde X_0$, i.e.\ a rank-2, degree-1 vector
bundle $E$ on $\tilde X_0$ together with a $2$-dimensional subspace
$S\subset E_{x^+}\oplus E_{x^-}$. For a nonzero proper subbundle $F\subsetneq E$, write
\[
S_F \;:=\; S\cap \bigl(F_{x^+}\oplus F_{x^-}\bigr)\ \subset\ F_{x^+}\oplus F_{x^-},
\]
and for a parameter $\alpha\in(0,1]$ define the \emph{$\alpha$-degree} of $F$ (relative to $S$) by
\[
\deg{}_\alpha(F,S) \;:=\; \deg(F) + \alpha\,\dim(S_F).
\]
The pair $(E,S)$ is \emph{$\alpha$-semistable} if for every nonzero proper subbundle
$F\subsetneq E$,
\[
\frac{\deg{}_\alpha(F,S)}{\operatorname{rank}(F)} \;\le\; \frac{\deg{}_\alpha(E,S)}{\operatorname{rank}(E)}
\;=\; \frac{\deg(E)+2\alpha}{2},
\]
and \emph{$\alpha$-stable} if the inequality is strict for every such $F$. Since $F$ has rank
$1$ and $\dim S_F\in\{0,1,2\}$, the right-hand side takes only finitely many values as
$\alpha$ ranges over $(0,1]$, so $(0,1]$ decomposes into finitely many open \emph{chambers}
separated by finitely many \emph{walls}, across which the (semi)stable locus  and hence the
moduli space $\mathcal M^\alpha_{\operatorname{GPB}}$  can change.
\end{definition}

\begin{remark}
At $\alpha\to 0^+$ the stability condition degenerates to stability of the underlying bundle
$E$ alone, with $S$ unconstrained; this is the chamber realized by the Grassmannian bundle
$\operatorname{GPB}_{\tilde X_0}=\operatorname{Gr}_2(\mathbb E_{x^+}\oplus\mathbb E_{x^-})$ (equivalently $\mathcal M^{\alpha=0}_{\operatorname{GPB}}$ in the
notation of \S4). As $\alpha$ increases across the walls in $(0,1]$, the $\alpha$-stable
locus is modified along loci where some line subbundle $F\subset E$ achieves equality above;
these are exactly the loci blown up (or flipped) in passing from
$\mathcal M^{\alpha=0}_{\operatorname{GPB}}$ to $\mathcal M^{\alpha=1}_{\operatorname{GPB}}$.
\end{remark}

\begin{definition}[$1$-stability and $\mathcal M^{\alpha=1}_{\operatorname{GPB}}$]\label{def:1-stab}
Specializing \cref{def:alpha-stab} to $\alpha=1$, a pair $(E,S)$ is \emph{$1$-semistable}
(resp.\ \emph{$1$-stable}) if for every line subbundle $F\subset E$,
\[
\deg(F)+\dim(S_F) \;\le\; \tfrac12\bigl(\deg(E)+2\bigr)
\qquad(\text{resp.\ strict}).
\]
This is precisely the stability notion under which the dictionary $(E,S)\mapsto \mathcal F$ identifies $1$-stable GPBs with stable torsion-free sheaves on $X_0$. We write
$\mathcal M^{\alpha=1}_{\operatorname{GPB}}$ for the resulting moduli space of $1$-stable GPBs;
by construction it is the space appearing at the base of the right-hand column of the
flip-flop diagram, mapping to $\mathcal M_{\mathrm{TF}_{X_0}}$.
\end{definition}

\smallskip

    \textbf{For convenience, we will describe the boundary divisor and the natural involution on the normalisation of the boundary divisor on the stacks but these descriptions hold equally for the corresponding moduli schemes.}

\smallskip

This stack \(\mathrm{GPB}\) carries two distinguished boundary divisors \(D_1\) and \(D_2\).

\begin{definition}[Boundary divisors on \(\mathrm{GPB}\)]
A point of \(\operatorname{GPB}_{\tilde X_0}\) is a pair \((E,S)\) with \(E\) a rank-2, degree-1 bundle on \(\tilde X_0\) and \(S\subset E_{x^+}\oplus E_{x^-}\) a 2-dimensional subspace. Generically \(S\) is the graph of an isomorphism \(A\colon E_{x^+}\xrightarrow{\ \sim\ } E_{x^-}\), and the two boundary divisors record the two ways this graph description can degenerate:
\begin{itemize}
    \item $D_1$ is the closure in $\operatorname{GPB}_{\tilde X_0}$ of the locus of pairs $(E,A)$ with $A\colon E_{x^+}\to E_{x^-}$ of rank at most $1$;
    \item $D_2$ is the closure of the locus of pairs $(E,A')$ with $A'\colon E_{x^-}\to E_{x^+}$ of rank at most $1$.
\end{itemize}
(Taking closures matters only at points where the relevant map fails to be defined at all  i.e.\ where $S$ fails to project isomorphically onto $E_{x^+}$, resp.\ $E_{x^-}$; within the locus where $A$ or $A'$ is defined, rank $\le 1$ is already a closed condition.) We describe $D_1$ in detail; $D_2$ is identical with $x^+$ and $x^-$ exchanged.

On the open locus where \(A\) has rank exactly \(1\), \(\ker A\subset E_{x^+}\) and \(\operatorname{im}A\subset E_{x^-}\) are lines, so \((E,A)\) determines a \emph{quasi-parabolic} bundle on \(\tilde X_0\): a bundle together with a line in each of \(E_{x^+}\) and \(E_{x^-}\). Let \(\operatorname{PBun}_{2,1}(\tilde{X}_0; x^+, x^-)\) denote the moduli stack of such quasi-parabolic bundles, with tautological line subbundles \(\mathbb{L}_+ \subset \mathbb{E}_{x^+}\) and \(\mathbb{L}_- \subset \mathbb{E}_{x^-}\). For parabolic weights \(\epsilon\) small and equal at both points, semistability forces \(\ker A\) and \(\operatorname{im}A\) to be exactly the parabolic lines, so \(A\) induces an isomorphism \(E_{x^+}/\ker A\xrightarrow{\ \sim\ }\operatorname{im}A\); a point of \(D_1\) away from \(D_1\cap D_2\) is thus the same data as a quasi-parabolic bundle together with such an isomorphism:
\[
D_1 \setminus (D_1 \cap D_2) \ \cong \ \Hom(\mathbb{E}_{x^+}/\mathbb{L}_+, \mathbb{L}_-),
\]
and symmetrically
\[
D_2 \setminus (D_1 \cap D_2) \ \cong \ \Hom(\mathbb{E}_{x^-}/\mathbb{L}_-, \mathbb{L}_+).
\]

Fiberwise over $\operatorname{Bun}_{\tilde X_0}$, $D_1$ is the affine cone of rank-$\le1$ maps $E_{x^+}\to E_{x^-}$, and $D_1\cap D_2$ is exactly its boundary at infinity: as a fixed rank-$1$ map $A_0$ is rescaled to infinity, the corresponding plane $S$ degenerates to the \emph{decomposable} plane $\ker A_0\oplus\operatorname{im}A_0$  a line in $E_{x^+}$ direct-summed with a line in $E_{x^-}$, with no isomorphism or homomorphism between them at all. So $D_1\cap D_2$ is parametrized by an independent choice of line in $E_{x^+}$ and in $E_{x^-}$, i.e.\ a full flag at \emph{both} marked points:
\[
D_1 \cap D_2 \ \simeq \ \operatorname{PBun}_{2,1}(\tilde{X}_0;\, x^+,\, x^-).
\]
\end{definition}

The vertex of this same cone  where $A$ drops all the way to rank $0$  is a different locus entirely, disjoint from $D_1\cap D_2$; it is what we blow up next to build the normalization of $\mathcal M_{X_0}$.

To construct the normalization of the Gieseker moduli space, we blow up the variety \(\mathrm{GPB}\) along the two codimension-4 subvarieties:
\begin{itemize}
    \item Let \(W_1 := \{(E, A) \in D_1 \mid \operatorname{rank} A = 0\}\), the vertex of the cone described above, where $S=E_{x^+}\oplus 0$. This is a codimension-4 subvariety of \(\mathrm{GPB}\) and the singular locus of $D_1$. Since $A=0$ pins down $S$ uniquely with no further choice of line, the forgetful morphism \(W_1 \to \Bun_{\tilde{X}_0}\), \((E,A) \mapsto E\), is an isomorphism.
    \item Let \(W_2 := \{(E, A') \in D_2 \mid \operatorname{rank} A' = 0\}\), the analogous vertex inside $D_2$, where $S=0\oplus E_{x^-}$. Likewise $W_2$ is a codimension-4 subvariety and the singular locus of $D_2$, and the forgetful morphism \(W_2 \to \Bun_{\tilde{X}_0}\) is an isomorphism.
\end{itemize}

\begin{construction}[Normalization via successive blow-ups]
We resolve the two cone singularities identified above by blowing up their vertices. First, blow up \(\mathrm{GPB}\) along $W_1$, the rank-0 locus in the $D_1$-direction. Since $D_1$ is fiberwise a cone over $\mathbb P^1\times\mathbb P^1$ with vertex $W_1$, this blow-up replaces $W_1$ with that $\mathbb P^1\times\mathbb P^1$ of $(\ker,\operatorname{im})$-pairs, and the exceptional divisor is
\[
E_1 = \mathbb{P}\Hom(\mathbb{E}_{x^+}, \mathbb{E}_{x^-}).
\]
Symmetrically, blow up along \(W_2\) (the rank-0 locus in the $D_2$-direction) to obtain the exceptional divisor
\[
E_2 = \mathbb{P}\Hom(\mathbb{E}_{x^-}, \mathbb{E}_{x^+}).
\]
Write \(\widetilde{\operatorname{GPB}}_{\tilde{X}_0}\) for the space obtained after both blow-ups, and $\overline{D_1}, \overline{D_2}$ for the strict transforms of the two boundary divisors $D_1$ and $D_2$.

Each blow-up only touches the divisor whose cone it resolves  $E_1$ meets $\overline{D_1}$ but not $\overline{D_2}$, and $E_2$ meets $\overline{D_2}$ but not $\overline{D_1}$. Concretely, the loci
\[
\overline{D_1}\cap E_1 \ \cong \ \overline{D_2}\cap E_2 \ \cong \ \operatorname{PBun}_{2,1}(\tilde{X}_0; x^+, x^-)
\]
arise as the exceptional $\mathbb P^1\times\mathbb P^1$ of the respective blown-up cone point. Together with the boundary-at-infinity locus $\overline{D_1}\cap \overline{D_2}\cong\operatorname{PBun}_{2,1}(\widetilde X_0;x^+,x^-)$ identified earlier, this means $\overline{D_1}$ is now a $\mathbb P^1$-bundle over $\operatorname{PBun}_{2,1}(\tilde X_0;x^+,x^-)$ with two natural sections, $\overline{D_1}\cap \overline{D_2}$ and $\overline{D_1}\cap E_1$  and symmetrically for $\overline{D_2}$.
\end{construction}

\begin{theorem}
The blown-up space \(\widetilde{\mathcal M_{\operatorname{GPB}}}\) is the normalization of the moduli space \(\mathcal{M}_{X_0} = \mathcal{G}_{X_0}(2,1)\) of stable rank-2, degree-1 Gieseker vector bundles on the nodal curve \(X_0\).
\end{theorem}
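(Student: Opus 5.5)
The strategy is to construct a birational, finite morphism $\pi\colon \widetilde{\mathcal M_{\operatorname{GPB}}}\to\mathcal M_{X_0}$ from the smooth blown-up space; normality of the source then identifies it with the normalization of $\mathcal M_{X_0}$. The argument has two parts. First, smoothness of $\widetilde{\mathcal M_{\operatorname{GPB}}}$ is automatic. The Grassmannian bundle over the smooth fine moduli space $\mathcal M_{\operatorname{Bun}}$ is smooth, and we blow it up along the smooth centres $W_1,W_2\cong\mathcal M_{\operatorname{Bun}}$. These centres are disjoint, because $E_{x^+}\oplus 0\neq 0\oplus E_{x^-}$. Second, we must produce $\pi$ and check that it is proper, quasi-finite and birational. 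Since $\mathcal M_{X_0}$ is projective and $\widetilde{\mathcal M_{\operatorname{GPB}}}$ is proper, properness is free. For birationality, on the open locus where $S$ is the graph of an isomorphism $A\colon E_{x^+}\to E_{x^-}$, the pair $(E,A)$ glues to a vector bundle on $X_0$ (the $k=0$ stratum). Stability matches on this locus by \cref{def:1-stab} together with the dictionary $(E,S)\mapsto\mathcal F$, so $\pi$ is an isomorphism onto the dense open $U_{X_0}(2,1)$.

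To construct $\pi$ I would build a family of Gieseker bundles over $\widetilde{\mathcal M_{\operatorname{GPB}}}$ and invoke the moduli property of $\mathcal M_{X_0}$. The construction goes stratum by stratum. A point of $\overline{D_1}\setminus(E_1\cup\overline{D_2})$ is a quasi-parabolic bundle $(E,\ker A,\operatorname{im}A)$ together with an isomorphism $E_{x^+}/\ker A\cong\operatorname{im}A$. On $X_1=\tilde X_0\cup R_1$ we take $E$ on $\tilde X_0$ and $\mathcal O\oplus\mathcal O(1)$ on $R_1$. The gluing at $x^\pm$ is chosen so that the trivial direction of $R_1$ identifies $E_{x^+}/\ker A$ with $\operatorname{im}A$, while the $\mathcal O(1)$ direction joins $\ker A$ to a complement of $\operatorname{im}A$. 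The result is a Gieseker bundle with $\pi_{1*}$ equal to the torsion-free sheaf attached to $S$. The two sections $\overline{D_1}\cap\overline{D_2}$ and $\overline{D_1}\cap E_1$ are the points where the $\mathbb P^1$-fibre degenerates. Over them the chain must lengthen to $X_2$, with $\mathcal O\oplus\mathcal O(1)$ on each $R_i$, and the gluing data are exactly a line in $E_{x^+}$ and a line in $E_{x^-}$, that is, a point of $\operatorname{PBun}_{2,1}$. Globally I would realise all of this by a modification of the trivial family $\tilde X_0\times\widetilde{\mathcal M_{\operatorname{GPB}}}$: glue $x^\pm\times\widetilde{\mathcal M_{\operatorname{GPB}}}$ to form $X_0\times\widetilde{\mathcal M_{\operatorname{GPB}}}$, then blow up the node section along $\overline{D_1}\cup\overline{D_2}\cup E_1\cup E_2$, as in Gieseker's construction. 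The blow-ups along $W_1,W_2$ are precisely what make the torsion-free universal sheaf $\mathcal F$ admit a locally free modification on the modified family. Stability of the resulting Gieseker bundles follows from $1$-stability of $(E,S)$, which is open and holds on the whole space.

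Quasi-finiteness reads off from the explicit description. A Gieseker bundle on $X_k$ determines $E=E|_{\tilde X_0}$ together with the gluing data at $x^\pm$. On the $k=1$ stratum these data recover $(E,S)$ up to the finitely many ways of deciding which branch, $x^+$ or $x^-$, plays the role of the kernel. The resulting two preimages are the points of $\overline{D_1}$ and $\overline{D_2}$ exchanged by the involution mentioned in the introduction. On the $k=2$ stratum the preimages are again the finitely many points of the four intersection loci identified under the involution. So $\pi$ is finite and birational onto $\mathcal M_{X_0}$ with smooth, hence normal, source, and by Zariski's main theorem it is the normalization.

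The main obstacle is the construction of the family, specifically showing that the modified family extends across $E_1$, $E_2$ and the two sections as a flat family of admissible bundles rather than only pointwise. I would handle this by a local computation in the normal directions to $W_1$: work in coordinates in which $A$ is a $2\times2$ matrix, so that $W_1=\{A=0\}$. The check is that after blowing up the origin in $\operatorname{Hom}(\mathbb C^2,\mathbb C^2)$, the elementary transformation of $\mathcal F$ along the exceptional divisor becomes locally free with admissible restriction $\mathcal O\oplus\mathcal O(1)$ on the inserted $R_i$. This reduces to the standard local models of Gieseker and of Nagaraj--Seshadri.
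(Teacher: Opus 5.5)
\textbf{The gap is in the space you start from.} You take $\widetilde{\mathcal M_{\operatorname{GPB}}}$ to be the blow-up of the Grassmannian bundle $\operatorname{Gr}_2(\mathcal E_{x^+}\oplus\mathcal E_{x^-})$ over the moduli space $\mathcal M_{\operatorname{Bun}}$ of \emph{stable} bundles, and you assert that $1$-stability holds on all of it. It does not.

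Take a nonsplit extension $0\to F\to E\to Q\to 0$ with $\deg F=0$ and $\deg Q=1$. Such extensions exist because $\dim\operatorname{Ext}^1(Q,F)=g-1>0$, and they are stable. Put $S=F_{x^+}\oplus F_{x^-}$. Then $\deg F+\dim S_F=2>3/2$. Equivalently, the sheaf $\mathcal F$ attached to $(E,S)$ contains $\nu_*F$, which has degree $1>\mu(\mathcal F)=1/2$.

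This is the locus $\Delta_1$ of \S4. It is disjoint from $W_1\sqcup W_2$, so it survives the blow-ups, and over it your family consists of unstable Gieseker bundles. Hence there is no classifying map to $\mathcal M_{X_0}$ there. Conversely, $1$-stable pairs with $E$ unstable are missing from your space. These form the locus $\Delta_2$, which contains glued bundles on $X_0$ whose pullback to $\tilde X_0$ is unstable. So $\pi$ could not be surjective either.

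The discrepancy is genuine. Your space is a $\mathrm{KGL}_2$-bundle over $\mathcal M_{\operatorname{Bun}}$, i.e.\ the paper's $\mathcal M^{\alpha=0}_{\mathrm{GVBD}}$. By the paper's relation $P(Y,t)=P(\mathcal M^{\alpha=0}_{\mathrm{GVBD}},t)+P(\Delta_2,t)-P(\Delta_1,t)$, the two Poincaré polynomials differ by $(t^{2(g-1)}+t^{2g})(1+t)^{4(g-1)}\neq 0$.

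\textbf{How to repair the base.} It has to be $\mathcal M^{\alpha=1}_{\operatorname{GPB}}$, the coarse space of the $1$-stable open substack of $\operatorname{Gr}_2(\mathbb E_{x^+}\oplus\mathbb E_{x^-})\to\Bun_{\tilde X_0}$. This space differs from the Grassmannian bundle by the flip $\Delta_1\leftrightarrow\Delta_2$, matching the paper's identification $Y=\mathcal M^{\alpha=1}_{\mathrm{GVBD}}$. Its smoothness then comes from smoothness of that stack together with simplicity of stable pairs, not from a Grassmannian-bundle structure. You should also recheck that $W_i\cong\mathcal M_{\tilde X_0}(2,1)$ inside it; this holds, since on $W_1$ the $1$-stability condition is exactly stability of $E$. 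The paper's own justification, which is only by citation, sidesteps the problem this way. It uses Kausz's description of $\mathrm{GVBD}$ as a $\mathrm{KGL}_2$-fibration over the full stack $\Bun_{\tilde X_0}$, where no stability is imposed, and only then passes to the $\alpha=1$ chamber of Seshadri and Sun.

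\textbf{The local model along the exceptional divisors is also wrong.} A point of $E_1\setminus\overline{D_1}$ is the class of an \emph{isomorphism} $\phi\colon E_{x^+}\to E_{x^-}$ over $(E,E_{x^+}\oplus 0)\in W_1$. The associated sheaf is $\nu_*(E(-x^-))$, of type $\mathfrak m\oplus\mathfrak m$. The Gieseker bundle it must produce lives on $X_1$, with degree $-1$ on $\tilde X_0$ and $\mathcal O(1)\oplus\mathcal O(1)$ on the bridge. It lies in the open part of the $\mathbb P^3$-fibre $\mathbb P(\operatorname{Hom}(\mathcal E_{x^+},\mathcal E_{x^-}))$ of $\theta$ over $TF_2$. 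Only along $E_1\cap\overline{D_1}$ does the chain lengthen to $X_2$.

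So the check you reduce to at the end (``admissible restriction $\mathcal O\oplus\mathcal O(1)$ on the inserted $R_i$'') fails at the generic point of $E_i$. Your quasi-finiteness count also skips the stratum $E_1\sqcup E_2$, which maps two-to-one onto that $\mathbb P^3$-bundle, the two sheets being exchanged by the involution.

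\textbf{Smaller slips.} On the generic stratum of $\overline{D_1}$, the bundle on $\tilde X_0$ must be a degree-$0$ elementary transform of $E$, e.g.\ $\ker(E\to E_{x^-}/\operatorname{im}A)$. With $E$ itself plus $\mathcal O\oplus\mathcal O(1)$ on $R_1$, the total degree is $2$ and $\pi_{1*}$ cannot equal $\mathcal F$. Also, there are three double loci, namely $E_1\cap\overline{D_1}$, $\overline{D_1}\cap\overline{D_2}$ and $\overline{D_2}\cap E_2$, not four.

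Once the base and the local models are corrected, your direct strategy is a legitimate alternative to the paper's appeal to Kausz. That strategy is: build the family, get the classifying map, show it is finite and birational onto $\mathcal M_{X_0}$, and apply Zariski's main theorem. As written, however, the morphism $\pi$ does not exist.
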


On \(\widetilde{\mathcal M_{\operatorname{GPB}}}\) we now have four boundary divisors, meeting in a connected chain $E_1 \cup \overline{D_1} \cup \overline{D_2} \cup E_2$ via the identifications above. We write
\[
D = \overline{D_1} \cup \overline{D_2} \cup E_1 \cup E_2
\]
for their union, the total boundary divisor on \(\widetilde{\mathcal M_{\operatorname{GPB}}}\).

\smallskip

\textbf{For convenience, we describe the natural involution on the normalization of the boundary
divisor at the level of stacks; the same description holds verbatim for the corresponding
moduli schemes. The existence of a genuine involution with these properties is established in
\cite[\S5.1--5.3]{Kollar2013}, so here we only record its effect stratum by stratum.}

\smallskip

\subsection{Description of the involution \texorpdfstring{$\iota\colon \widetilde{D}\to \widetilde{D}$}{iota: D-tilde to D-tilde}}

Having described all the boundary divisors, we now turn to the natural involution on their normalization
\[
\widetilde{D} \;=\; D_1 \sqcup D_2 \sqcup E_1 \sqcup E_2,
\]
where, as before, $D_1$ and $D_2$ denote the strict transforms of the two rank-one loci and $E_1, E_2$ the exceptional divisors over the blown-up vertices. There is a natural involution
\[
\iota \colon \widetilde{D} \longrightarrow \widetilde{D}
\]
(see \cite[Section~5.1--5.3]{Kollar2013}), realized canonically by Hecke modifications (elementary transformations) at the marked points $x^+$ and $x^-$; each modification preserves rank, degree, and Gieseker stability, so $\iota$ is well defined on all of $\widetilde D$. Since $\widetilde D$ is a disjoint union of four pieces, $\iota$ must be pinned down stratum by stratum. Two features of the resulting description are worth flagging in advance: first, $D_1\cap D_2$ occurs as \emph{two} distinct loci in $\widetilde D$  one inside $D_1$, one inside $D_2$  and these two copies map to different targets ($E_2\cap D_2$ and $E_1\cap D_1$, respectively); second, on the open dense strata of $D_1,D_2$ and of $E_1,E_2$, the involution swaps $D_1\leftrightarrow D_2$ and $E_1 \leftrightarrow E_2$ as expected, matching the two exceptional/rank-one divisors that arose from the same blow-up construction.

\begin{itemize}

\item \textbf{The generic strata: $\iota\colon D_1\setminus\big[(D_1\cap D_2)\cup(D_1\cap E_1)\big]\ \longrightarrow\ D_2\setminus\big[(D_1\cap D_2)\cup(D_2\cap E_2)\big]$.}

A point of $D_1\setminus(D_1\cap D_2)$ is a pair $(E,A)$ with $A\colon \mathbb{E}_{x^+}\to \mathbb{E}_{x^-}$ a rank-one homomorphism. Set
\[
L_+ := \ker A \subset \mathbb{E}_{x^+}, \qquad L_- := \operatorname{im} A \subset \mathbb{E}_{x^-},
\]
so that $A$ induces an isomorphism $\mathbb{E}_{x^+}/L_+ \xrightarrow{\ \sim\ } L_-$. The involution is realized by a pair of complementary Hecke modifications at $x^+$ and $x^-$ that jointly preserve the total degree:
\[
\begin{tikzcd}
E_{x^+} \arrow[d, "A"'] & E''_{x^+} \arrow[l, "H_1"'] \\
E_{x^-} \arrow[r, "H_2"'] & E''_{x^-} \arrow[u, "A''"']
\end{tikzcd}
\]
subject to
\[
\operatorname{im} H_1 = \ker A, \qquad \operatorname{im} A = \ker H_2, \qquad \ker H_1 = \operatorname{im} A'', \qquad \ker A'' = \operatorname{im} H_2 .
\]
The image of $(E,A)$ under $\iota$ is defined to be $\iota(E,A) := (E'', A'')$. In terms of flags, this sends
\[
(E,\, L_+,\, L_-) \ \longmapsto\ \big(E'',\ \mathbb{E}_{x^+}/L_+,\ \mathbb{E}_{x^-}/L_-\big),
\]
with $L_+ = \ker A$ and $L_- = \operatorname{im} A$ as above.

\item \textbf{From $D_1\cap D_2$ (viewed inside $D_1$) to $E_2\cap D_2$.}

A point of $D_1\cap D_2 \subset D_1$ is a triple $(E,\, L_+\subset E_{x^+},\, L_-\subset E_{x^-})$. Perform the Hecke modification that decreases degree at $x^+$,
\[
0 \to E' \to E \to E_{x^+}/L_+ \to 0,
\]
followed by the one that increases degree at $x^-$,
\[
0 \to E' \to E'' \to \mathbb{C} \to 0,
\]
where the induced map $E'_{x^-}\to E''_{x^-}$ has kernel $L_-$ and image isomorphic to $E'_{x^-}/L_-$. The new flags are $E_{x^+}/L_+ \subset E''_{x^+}$ and $E_{x^-}/L_- \subset E''_{x^-}$, so
\[
\iota\big(E,\, L_+,\, L_-\big) = \big(E'',\ E_{x^+}/L_+,\ E_{x^-}/L_-\big) \in E_2\cap D_2 .
\]

\item \textbf{From $D_1\cap D_2$ (viewed inside $D_2$) to $E_1\cap D_1$.}

Symmetrically, a point of $D_1\cap D_2 \subset D_2$ is again a triple $(E,\, L_+\subset E_{x^+},\, L_-\subset E_{x^-})$, and this time we decrease degree at $x^-$,
\[
0 \to E' \to E \to E_{x^-}/L_- \to 0,
\]
then increase degree at $x^+$,
\[
0 \to E' \to E'' \to \mathbb{C} \to 0,
\]
where the induced map $E'_{x^+}\to E''_{x^+}$ has kernel $L_+$ and image isomorphic to $E'_{x^+}/L_+$. As before the new flags are $E_{x^+}/L_+ \subset E''_{x^+}$ and $E_{x^-}/L_- \subset E''_{x^-}$, giving
\[
\iota\big(E,\, L_+,\, L_-\big) = \big(E'',\ E_{x^+}/L_+,\ E_{x^-}/L_-\big) \in E_1\cap D_1 .
\]

\item \textbf{The generic strata of the exceptional divisors: $\iota\colon E_1\setminus (D_1\cap E_1)\ \longrightarrow\ E_2\setminus (D_2\cap E_2)$.}

A point of $E_1 = \mathbb{P}\operatorname{Hom}(\mathbb{E}_{x^+}, \mathbb{E}_{x^-})$ is a line $\ell = \langle \phi \rangle$, where $\phi\colon \mathbb{E}_{x^+}\to \mathbb{E}_{x^-}$ is an isomorphism. Let $L := \mathcal{O}_{\tilde X_0}(x^- - x^+)$, a degree-zero line bundle on $\tilde X_0$. The involution is induced by twisting by $L$:
\[
(E,\phi)\ \longmapsto\ (E\otimes L,\ \phi'), \qquad
\phi' \colon \mathbb{E}_{x^-}\otimes L_{x^-} \longrightarrow \mathbb{E}_{x^+}\otimes L_{x^+},
\]
where $\phi'$ is the inverse of $\phi$ after twisting by $L$. This produces a well-defined line in $\mathbb{P}\operatorname{Hom}(\mathbb{E}_{x^-}, \mathbb{E}_{x^+})$, i.e.\ a point of $E_2$, and the construction gives a canonical algebraic isomorphism
\[
E_1\setminus (D_1\cap E_1) \ \simeq\ E_2\setminus (D_2\cap E_2).
\]

\item \textbf{From $E_1\cap D_1$ to $E_2\cap D_2$.}

Both loci are canonically isomorphic to the moduli space $\operatorname{PBun}_{2,1}(\tilde X_0; x^+, x^-)$ of rank-2, degree-1 parabolic bundles with full flags $(L_+\subset \mathbb{E}_{x^+},\, L_-\subset \mathbb{E}_{x^-})$. On such a parabolic bundle $(E, L_+, L_-)$, the involution is realized by the same twist as above, applied to both flags:
\begin{itemize}
    \item first, $E \mapsto E\otimes \mathcal{O}(x^- - x^+)$;
    \item at $x^+$: fix any identification $\lambda_+\colon \mathbb{C}\to \mathcal{O}(x^- - x^+)_{x^+}$, and let the new flag be the image of $\mathrm{id}_{E_{x^+}}\otimes \lambda_+ \colon E_{x^+} \to E_{x^+}\otimes \mathcal{O}(x^- - x^+)_{x^+}$;
    \item at $x^-$: fix any identification $\lambda_-\colon \mathbb{C}\to \mathcal{O}(x^- - x^+)_{x^-}$, and let the new flag be the image of $\mathrm{id}_{E_{x^-}}\otimes \lambda_- \colon E_{x^-} \to E_{x^-}\otimes \mathcal{O}(x^- - x^+)_{x^-}$.
\end{itemize}
This gives the canonical isomorphism $E_1\cap D_1 \simeq E_2\cap D_2$, and preserves parabolic stability for sufficiently small weights.

\end{itemize}

Altogether, $\iota$ defines an algebraic automorphism of order two on the total divisor $\widetilde{D}$.

\smallskip

\textbf{In the subsection that follows, we work with the moduli \emph{schemes} rather than the
moduli stacks; however, for notational simplicity we retain the stack notation throughout,
writing, e.g., $\operatorname{Bun}_{\tilde X_0}$ for $\mathcal M_{\operatorname{Bun}}$, and
similarly for $\operatorname{GPB}_{\tilde X_0}$, $\widetilde{\operatorname{GPB}}_{\tilde X_0}$,
and $\mathrm{GVB}$.}

\smallskip

\subsection{Description of the effect of the involution on various line bundles}

\subsubsection{\textbf{Description of \(\mathcal{O}_Y(D_1)\)}}

From the blow-up construction,
\[
\mathcal{O}_Y(D_1)
\;\cong\; \pi^*\mathcal{O}_{\mathrm{GPB}}(D_1)\otimes \mathcal{O}_Y(-2E_1)
\;\cong\; \pi^*\mathcal{O}_{\mathrm{GPB}/\Bun}(1)\otimes \det(\mathbb{E}_{x^+})^{-1}\otimes \mathcal{O}_Y(-2E_1).
\]
Indeed, the point of $\mathrm{GPB}$ over which $E_1$ is the exceptional divisor lies on $D_1$ with multiplicity $2$ (local equation $xw-yz=0$), so
\[
\pi^*\mathcal{O}_{\mathrm{GPB}}(D_1) \;\cong\; \mathcal{O}_Y(D_1)\otimes \mathcal{O}_Y(2E_1).
\]
To identify $\mathcal{O}_{\mathrm{GPB}}(D_1)$ itself, recall the universal short exact sequence on $\mathrm{GPB}$,
\[
0 \to S \to \mathbb{E}_{x^+}\oplus \mathbb{E}_{x^-} \to Q \to 0,
\]
where $S$ is the tautological rank-2 subbundle. The divisor $D_1$ is precisely the vanishing locus of the determinant of the induced map $S \to \mathbb{E}_{x^-}$, so
\[
\mathcal{O}_{\mathrm{GPB}}(D_1)
= (\det S)^{-1}\otimes \det \mathbb{E}_{x^-}
= \det Q \otimes (\det \mathbb{E}_{x^-}\otimes\det \mathbb{E}_{x^+})^{-1}\otimes \det \mathbb{E}_{x^-}
= \mathcal{O}_{\mathrm{GPB}/\Bun}(1)\otimes (\det \mathbb{E}_{x^+})^{-1},
\]
using $\det S \otimes \det Q \cong \det(\mathbb{E}_{x^+}\oplus \mathbb{E}_{x^-})$ and the convention $\mathcal{O}_{\mathrm{GPB}/\Bun}(1) = \det Q$.

\subsubsection{\textbf{Description of \(\mathcal{O}_Y(D_2)\)}}

By the symmetric argument,
\[
\mathcal{O}_Y(D_2)
\;\cong\; \pi^*\mathcal{O}_{\mathrm{GPB}}(D_2)\otimes \mathcal{O}_Y(-2E_2)
\;\cong\; \pi^*\mathcal{O}_{\mathrm{GPB}/\Bun}(1)\otimes \det(\mathbb{E}_{x^-})^{-1}\otimes \mathcal{O}_Y(-2E_2),
\]
since the point of $\operatorname{GPB}_{\tilde X_0}$ blown up to produce $E_2$ lies on $D_2$ with multiplicity $2$, giving $\pi^*\mathcal{O}_{\mathrm{GPB}}(D_2) \cong \mathcal{O}_Y(D_2)\otimes \mathcal{O}_Y(2E_2)$. Here $D_2$ is the vanishing locus of the determinant of the induced map $S \to \mathbb{E}_{x^+}$, so
\[
\mathcal{O}_{\mathrm{GPB}}(D_2)
= (\det S)^{-1}\otimes \det \mathbb{E}_{x^+}
= \det Q \otimes (\det \mathbb{E}_{x^-}\otimes\det \mathbb{E}_{x^+})^{-1}\otimes \det \mathbb{E}_{x^+}
= \mathcal{O}_{\mathrm{GPB}/\Bun}(1)\otimes (\det \mathbb{E}_{x^-})^{-1}.
\]

\subsubsection{\textbf{Description of \(\pi^*\mathcal{O}_{\mathrm{GPB}/\Bun}(1)|_{D_1}\)}}

We first describe $D_1$ itself. On the parabolic bundle moduli space $\mathrm{PBun}\to \Bun$, write $L_+\subset \mathbb{E}_{x^+}$ and $L_-\subset \mathbb{E}_{x^-}$ for the two tautological line subbundles. Then
\[
D_1 \;\cong\; \mathbb{P}\bigl(\mathcal{O} \oplus \operatorname{Hom}(\mathbb{E}_{x^+}/L_+,\, L_-)\bigr),
\]
so that
\[
\operatorname{Pic}(D_1)
\;\cong\; \operatorname{Pic}(\Bun) \oplus \mathbb{Z}\cdot [\mathbb{E}_{x^+}/L_+] \oplus \mathbb{Z}\cdot [\mathbb{E}_{x^-}/L_-] \oplus \mathbb{Z}\cdot \mathcal{O}_{D_1/\mathrm{PBun}}(1).
\]

\subsubsection{\textbf{Description of \(\pi^*\mathcal{O}_{\mathrm{GPB}/\Bun}(1)|_{D_2}\)}}

Analogously,
\[
D_2 \;\cong\; \mathbb{P}\bigl(\mathcal{O} \oplus \operatorname{Hom}(\mathbb{E}_{x^-}/L_-,\, L_+)\bigr),
\]
and
\[
\operatorname{Pic}(D_2)
\;\cong\; \operatorname{Pic}(\Bun) \oplus \mathbb{Z}\cdot [\mathbb{E}_{x^-}/L_-] \oplus \mathbb{Z}\cdot [\mathbb{E}_{x^+}/L_+] \oplus \mathbb{Z}\cdot \mathcal{O}_{D_2/\mathrm{PBun}}(1).
\]

\subsubsection{\textbf{Description of \(\pi^*\mathcal{O}_{\mathrm{GPB}/\Bun}(1)|_{E_1}\) and \(|_{E_2}\)}}

Recall $E_1 \cong \mathbb{P}(\operatorname{Hom}(\mathbb{E}_{x^+}, \mathbb{E}_{x^-}))$ is a $\mathbb{P}^3$-bundle over $\Bun$, so
\[
\operatorname{Pic}(E_1) \;\cong\; \operatorname{Pic}(\Bun) \oplus \mathbb{Z}\cdot \mathcal{O}_{E_1/\Bun}(1).
\]
Similarly, $E_2 \cong \mathbb{P}(\operatorname{Hom}(\mathbb{E}_{x^-}, \mathbb{E}_{x^+}))$ and
\[
\operatorname{Pic}(E_2) \;\cong\; \operatorname{Pic}(\Bun) \oplus \mathbb{Z}\cdot \mathcal{O}_{E_2/\Bun}(1).
\]

\subsubsection{\textbf{Restrictions of the exceptional divisors}}

Recall from above that
\[
\operatorname{Pic}(D_1)
\;\cong\; \operatorname{Pic}(\Bun) \oplus \mathbb{Z}\cdot [\mathbb{E}_{x^+}/L_+] \oplus \mathbb{Z}\cdot [\mathbb{E}_{x^-}/L_-] \oplus \mathbb{Z}\cdot \mathcal{O}_{D_1/\mathrm{PBun}}(1).
\]
Since $D_1 \cap E_1$ is the section at infinity and $D_1 \cap D_2$ is the zero section of the $\mathbb{P}^1$-bundle $D_1 \to \mathrm{PBun}$, restricting the exceptional divisor gives
\[
\mathcal{O}_Y(E_1)|_{D_1} \;\cong\; \mathcal{O}_{D_1}(D_1 \cap E_1) \;\cong\; \mathcal{O}_{D_1/\mathrm{PBun}}(1),
\]
and symmetrically,
\[
\mathcal{O}_Y(E_2)|_{D_2} \;\cong\; \mathcal{O}_{D_2}(D_2 \cap E_2) \;\cong\; \mathcal{O}_{D_2/\mathrm{PBun}}(1).
\]
Since $D_2 \cap E_1 = \emptyset$ and $D_1 \cap E_2 = \emptyset$, the cross-restrictions are trivial:
\[
\mathcal{O}_Y(E_1)|_{D_2} \;\cong\; \mathcal{O}_{D_2}, \qquad \mathcal{O}_Y(E_2)|_{D_1} \;\cong\; \mathcal{O}_{D_1}.
\]

\subsubsection{\textbf{Description of \(\iota^*(\mathcal{O}_Y(E_1)|_{D_1})\)}}

Since $\iota$ carries $D_1\cap D_2$ (viewed inside $D_2$) to $E_1\cap D_1$, pulling back along $\iota$ gives
\begin{align*}
\iota^*\bigl(\mathcal{O}_Y(E_1)|_{D_1}\bigr)
&= \iota^*\bigl(\mathcal{O}_{D_1}(D_1 \cap E_1)\bigr) \\
&= \mathcal{O}_{D_2}(D_1 \cap D_2) \\
&= \mathcal{O}_{D_2}(E_2 \cap D_2)\otimes \pi^*\operatorname{Hom}(L_+,\, \mathbb{E}_{x^-}/L_-) \\
&= \mathcal{O}_Y(E_2)|_{D_2}\otimes \pi^*\operatorname{Hom}(L_+,\, \mathbb{E}_{x^-}/L_-).
\end{align*}

\subsubsection{\textbf{Description of \(\iota^*(\mathcal{O}_Y(E_2)|_{D_2})\)}}

Symmetrically, since $\iota$ carries $D_1\cap D_2$ (viewed inside $D_1$) to $E_2\cap D_2$,
\begin{align*}
\iota^*\bigl(\mathcal{O}_Y(E_2)|_{D_2}\bigr)
&= \iota^*\bigl(\mathcal{O}_{D_2}(D_2 \cap E_2)\bigr) \\
&= \mathcal{O}_{D_1}(D_1 \cap D_2) \\
&= \mathcal{O}_{D_1}(E_1 \cap D_1)\otimes \pi^*\operatorname{Hom}(L_-,\, \mathbb{E}_{x^+}/L_+) \\
&= \mathcal{O}_Y(E_1)|_{D_1}\otimes \pi^*\operatorname{Hom}(L_-,\, \mathbb{E}_{x^+}/L_+).
\end{align*}

Combining the two computations, the involution acts on the restrictions of the total exceptional divisor $E_1+E_2$ by
\begin{align*}
\iota^*\Bigl(\bigl[\mathcal{O}_Y(E_1+E_2)|_{D_1},\ \mathcal{O}_Y(E_1+E_2)|_{D_2}\bigr]\Bigr)
&= \iota^*\Bigl(\bigl[\mathcal{O}_Y(E_1)|_{D_1},\ \mathcal{O}_Y(E_2)|_{D_2}\bigr]\Bigr) \\
&= \Bigl[\ \mathcal{O}_Y(E_1)|_{D_1}\otimes \pi^*\operatorname{Hom}(L_-,\, \mathbb{E}_{x^+}/L_+),\\
&\qquad\ \mathcal{O}_Y(E_2)|_{D_2}\otimes \pi^*\operatorname{Hom}(L_+,\, \mathbb{E}_{x^-}/L_-)\ \Bigr].
\end{align*}

\subsection{Description of the Theta Bundle on the moduli stack}
 
We first construct the theta bundle at the level of stacks; this construction is the input for the moduli-space-level description given in the next subsection. Throughout, consider the diagram of stacks
\[
\begin{tikzcd}
\mathrm{Gi}:=\mathrm{GVB} \arrow[d, "\pi"] & \widetilde{G}:=\widetilde{\mathrm{GPB}} \arrow[l, "\nu"]\arrow[d, "p_G"] \arrow[dd, bend left=60, "p_{B}"] \arrow[ddd, bend left=90, "p_{P}"] \\
\text{TF} & {G}:={\mathrm{GPB}} \arrow[l, "\nu_{G}"]\arrow[d, "p_{G/B}"]\\
& B:=\mathrm{Bun}\arrow[d, "p_{B/P}"]\\
& P:=\mathrm{Pic}^1_{\tilde{X}_0}
\end{tikzcd}
\]
in which the Gieseker moduli stack $\mathrm{GVB}$ is realized as the blow-up of $G = \mathrm{GPB}$ along two loci, and $\widetilde{G}$ is the corresponding blow-up of $G$. The maps in the diagram are:
\begin{itemize}
    \item \( p_G \colon \widetilde{G} \to G \), the projection of the Grassmannian bundle;
    \item \( p_B \colon \widetilde{G} \to B \) (and, by composition, \( p_B \colon G \to B \)), the map to $B$;
    \item \( \nu_G \colon G \to \mathrm{TF} \), the natural map to the moduli space of torsion-free sheaves;
    \item \( \nu \colon \widetilde{G} \to GVB \), the normalization map.
\end{itemize}
 
On $G = \mathrm{GPB}$ we have the universal short exact sequence
\[
0 \to S \to p_B^* (E_{x^+}\oplus E_{x^-}) \to Q \to 0,
\]
where $S$ is the universal rank-$2$ subbundle and $Q$ is the universal quotient. Comparing theta bundles across $\nu_G$, the theta bundle on the moduli space of torsion-free sheaves satisfies
\[
\nu_G^* \theta_{\mathrm{TF}} \ \cong \ p_{G/B}^* \theta_B \otimes \det Q \ = \ p_{G/B}^* \theta_B \otimes \mathcal{O}_{G/B}(1).
\]
Guided by this formula, we \emph{define} the theta bundle on $\widetilde{G}$ by pulling the same expression back to $\widetilde{G}$:
\[
p_B^*\theta_B\otimes p_G^*\mathcal{O}_{G/B}(1).
\]
 
It remains to relate $p_B^*\theta_B\otimes p_G^*\mathcal{O}_{G/B}(1)$ to the normalization of the Gieseker moduli space itself. Let $\mathcal E$ denote the universal Gieseker vector bundle on the universal modification family $f :\mathfrak X\to GVB$, and define the theta bundle on $GVB$ by $\theta_{Gie}:=\det Rf_*\mathcal E$. A Gieseker vector bundle does not have higher direct images on the modification map $\mathfrak X\to X_0\times GVB$, so $\theta_{Gie}\cong \pi^*\theta_{TF}$, and hence $\nu^*\theta_{Gie}\cong p_B^*\theta_B\otimes p_G^*\mathcal{O}_{G/B}(1)$. This completes the description of the theta bundle on the normalization of the Gieseker moduli stack.
 
\subsection{Description of the Theta Bundle on the moduli space}
 
The construction above pins down $\theta_B$ only abstractly, as a bundle of determinants of cohomology twisted by an auxiliary bundle chosen to kill its weight. We now make $\theta_B$ fully explicit, using the following Drezet--Narasimhan pullback formula for the ample generator of the Picard group of a fixed-determinant moduli space.
 
\smallskip

\begin{proposition}\label{prop:DN-theta-pullback}\cite[Proposition 2.1.]{Narasimhan2017}
Let $F$ be a family of semistable vector bundles on $X$ of rank $r$ and fixed determinant $\xi$ of degree $d$, parametrized by a smooth variety $T$. Let $M(r,\xi)$ be the moduli space of semistable bundles of rank $r$ and determinant $\xi$ on $X$, and let $\varphi\colon T\to M(r,\xi)$ be the morphism determined by $F$. Let further $p$ be a point of $X$, and let $F_p$ be the bundle on $T$ obtained by restricting $F$ to $p\times T$. Then the pullback by $\varphi$ of the ample generator $\theta$ of $Pic(M(r,\xi))$ is isomorphic to
\[
\varphi^*\theta\ \cong\ (Det F)^{r/n}\otimes(\det F_p)^{\chi/n},
\]
where $n := \gcd(r,d)$ and $\chi := d+r(1-g)$.
\end{proposition}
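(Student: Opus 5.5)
This is the Drezet--Narasimhan formula, quoted from \cite{Narasimhan2017}. To prove it, I would reduce to a comparison of line bundles on $M(r,\xi)$ via the universal property of the theta bundle.

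\textbf{Step 1: build a weight-zero line bundle.} For any family $F$ on $X\times T$, set
\[
\Lambda(F):=(\operatorname{Det}F)^{r/n}\otimes(\det F_p)^{\chi/n}.
\]
Replacing $F$ by $F\otimes q^*N$ with $N\in\operatorname{Pic}(T)$ changes $\operatorname{Det}F$ by $N^{-\chi}$, up to the sign convention for $\operatorname{Det}=(\det Rq_*)^{-1}$. It changes $\det F_p$ by $N^{r}$. The exponents $r/n$ and $\chi/n$ are chosen exactly so that these factors cancel. Hence $\Lambda(F)$ is independent of the normalization of $F$.

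\textbf{Step 2: descend to the moduli space.} Since $\Lambda$ is compatible with base change, it defines a line bundle on the moduli stack of semistable bundles of rank $r$ and determinant $\xi$. By Step 1, the central $\mathbb G_m$ acts on it with weight zero. Therefore it descends to a line bundle $\Lambda_M$ on $M(r,\xi)$ with $\varphi^*\Lambda_M\cong\Lambda(F)$ for every family.

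To carry out the descent concretely, I would take the Quot-scheme presentation $R\to M(r,\xi)=R/\!\!/\mathrm{GL}(V)$. On $R$ the universal family gives $\Lambda$ with a $\mathrm{GL}(V)$-linearization on which the scalars act trivially. Kempf's descent lemma then applies, provided stabilizers of closed orbits act trivially on the fibres. At a polystable point $\bigoplus E_i^{m_i}$, the stabilizer is $\prod\mathrm{GL}(m_i)$. Its character on $\Lambda$ is computed from $\chi(E_i)$ and $\operatorname{rk}E_i$. Since $\mu(E_i)=d/r$, one has $\chi(E_i)=(\chi/r)\operatorname{rk}E_i$, so the character is $\prod\det(g_i)^{\operatorname{rk}E_i(-\chi/n+\chi/n)}$, which is trivial.

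\textbf{Step 3: identify $\Lambda_M$ with $\theta$.} By Drezet--Narasimhan, $\operatorname{Pic}M(r,\xi)\cong\mathbb Z\theta$, and $\theta$ is the descent of the generalized theta divisor. Choose a vector bundle $V$ with $\chi(E\otimes V)=0$ and minimal rank, namely $\operatorname{rk}V=r/n$ and $\deg V$ determined accordingly. Then $\theta$ descends from $\operatorname{Det}(F\otimes p_X^*V)$.

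Deform $V$ to $\mathcal O^{r/n}\otimes\mathcal O(\text{points})$ in $K$-theory. Since the determinant of cohomology is additive in exact sequences, each point $x$ contributes $\det F_x$. Points of the same curve are algebraically equivalent, and on the unirational, simply connected $M$, $\operatorname{Pic}$ is discrete. Hence $\det F_x$ may be replaced by $\det F_p$. This gives $\operatorname{Det}(F\otimes V)\cong(\operatorname{Det}F)^{r/n}\otimes(\det F_p)^{\chi/n}$.

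\textbf{Main obstacle.} I expect the hardest part to be this last identification: keeping track of signs and conventions for $\operatorname{Det}$, and justifying the passage from $x$ to $p$ via discreteness of $\operatorname{Pic}M$. Once that is in place, the statement follows for smooth $T$ by pulling back along $\varphi$.
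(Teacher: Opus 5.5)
The paper gives no proof of this proposition. It quotes it from \cite{Narasimhan2017}, which rests on Drezet--Narasimhan, and uses it as a black box, so there is no in-paper argument to compare yours with. Your outline is the standard proof and is essentially correct.

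Step 1 correctly forces the convention $\operatorname{Det}=(\det Rq_*)^{-1}$. With $\det Rq_*$ itself, the stated expression would not have weight zero unless $\chi=0$. Your Kempf character computation at $\bigoplus E_i^{m_i}$ is right.

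The $K$-theory step also gives the correct sign. On a smooth curve, $[V]=\tfrac{r}{n}[\mathcal O_X]+\sum_j a_j[\mathcal O_{x_j}]$ with $\sum_j a_j=\deg V=-\chi/n$. So the right description is ``$\mathcal O^{r/n}$ plus skyscraper classes'', not $\mathcal O^{r/n}\otimes\mathcal O(\text{points})$. No deformation is needed, since $\operatorname{Det}(F\otimes p_X^*(-))$ is additive and factors through $K_0(X)$. Using $\det Rq_*(F\otimes p_X^*\mathcal O_x)=\det F_x$, you get $\operatorname{Det}(F\otimes p_X^*V)\cong(\operatorname{Det}F)^{r/n}\otimes\bigotimes_j(\det F_{x_j})^{-a_j}$.

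The step to tighten is replacing $\det F_{x_j}$ by $\det F_p$. As phrased, the appeal to discreteness of $\operatorname{Pic}M$ conflates line bundles on $T$ with line bundles on $M$. The bundle $\det F_x$ has central weight $r$ and does not descend to $M$ by itself. To make your route work, you would have to descend the weight-zero bundles $\det F_x\otimes(\det F_p)^{-1}$ to $M\times X$, argue rigidity in $x$, and pull back by $\varphi$.

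The fixed-determinant hypothesis gives the comparison directly on $T$. Since $\det F_t\cong\xi$ for every $t$ and $T$ is reduced, the seesaw principle gives $\det F\cong p_X^*\xi\otimes q^*N$ for some $N\in\operatorname{Pic}(T)$. Hence $\det F_x\cong N\cong\det F_p$ for every $x\in X$.

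With this, the Kempf computation for $\Lambda$ becomes dispensable. $\operatorname{Det}(F\otimes p_X^*V)$ is already of weight zero and descends to $\theta$ by Drezet--Narasimhan. You only need the base-change part of Step 2, to get $\varphi^*\theta\cong\operatorname{Det}(F\otimes p_X^*V)$ for an arbitrary family.

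This is also exactly where fixed determinant is essential. For varying determinant, $\det F_x\otimes(\det F_p)^{-1}$ is pulled back from a generally nontrivial element of $\operatorname{Pic}^0$ of the Jacobian. So the right-hand side then depends on $p$, and the formula does not transfer verbatim to spaces such as $\mathcal M_{\tilde X_0}(2,1)$.
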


In our setting $r=2$, $d=1$, and the genus of $\tilde X_0$ is $g-1$, so $n=\gcd(2,1)=1$ and
\[
\chi = 1+2\bigl(1-(g-1)\bigr) = 1+2(2-g).
\]
Therefore, Proposition~\ref{prop:DN-theta-pullback} gives
\[
\varphi^*\theta_{\mathcal M_{\mathrm{VB}}}\ \cong\ (Det E)^{2}\otimes(det E_p)^{1+2(2-g)}.
\]

\

\
 
We now return to the diagram relating $\mathrm{Gi}$, $\widetilde G$, $G$, $B$, and $P$ but now we consider moduli spaces instead of stacks:
\[
\begin{tikzcd}
\mathcal M_{\mathrm{GVB}} \arrow[d, "\pi"] & \widetilde{G}:=\widetilde{\mathcal M_{\operatorname{GPB}}} \arrow[l, "\nu"]\arrow[d, "p_G"] \arrow[dotted,dd, bend left=60, "p_{B}"] \arrow[dotted,ddd, bend left=90, "p_{P}"] \\
\mathcal M_{\mathrm{TF}} & G:=\mathcal M_{\operatorname{GPB}} \arrow[l, "\nu_{G}"]\arrow[dotted, d, "p_{G/B}"]\\
& B:=\mathcal M_{\operatorname{Bun}}\arrow[d, "p_{B/P}"]\\
& P:=\mathcal Pic^1_{\tilde{X}_0}
\end{tikzcd}
\]

First of all, the dotted maps are only rational maps, i.e., they are only defined in the complement of certain closed subsets which are always of codimension greater than equal to $2$. Applying Proposition~\ref{prop:DN-theta-pullback} on $\mathcal M_{\mathrm{TF}}$ and on $B$ in turn gives $\theta_{\mathcal M_{\mathrm{TF}}}\cong (Det ~\mathcal F)^{2}\otimes(\det \mathcal F_p)^{1+2(1-g)}$ and $\theta_{\mathcal M_{\mathrm{VB}}}\ \cong\ (Det ~E)^{2}\otimes(\det E_p)^{1+2(2-g)}$. Since $\nu^*Det~\mathcal F\cong Det ~E\otimes \det Q$, and $\pi^*\theta_{\mathcal M_{\mathrm{TF}}}\cong \theta_{\mathcal M_{\mathrm{GVB}}}$, pulling the first formula back along $\nu$ yields the following result.

\begin{equation}
\nu^*\theta_{\mathcal M_{\mathrm{GVB}}}\cong p_P^*(Det ~E)^2\otimes p_G^*(\det Q)^2\otimes p_P^*(\det E_p)^{1+2(1-g)}=p_P^*\theta_{\mathcal M_{\mathrm{VB}}}\otimes p_G^*(\det Q)^2\otimes p_P^*(\det E_p)^{-2}.
\end{equation}

\

We set $\theta_{\widetilde{G}}:=p_P^*\theta_{\mathcal M_{\mathrm{VB}}}\otimes p_G^*(\det Q)^2\otimes p_P^*(\det E_p)^{-2}$.

\smallskip

\textbf{In the subsection that follows, we work with the moduli \emph{schemes} rather than the
moduli stacks.}

\subsection{Computation of the Picard Group}
 
The Picard group of $\widetilde{\mathcal M_{{\operatorname{GPB}}}}$ is generated by classes pulled back from $\mathcal Pic^1(\tilde{X}_0)$, together with the theta line bundle $\theta_{\tilde G}$ and the boundary and exceptional classes
\[
\delta := p_G^* \mathcal{O}_{G/B}(1),\qquad
\varepsilon_1 := \mathcal{O}_{\widetilde{G}}(E_1),\qquad
\varepsilon_2 := \mathcal{O}_{\widetilde{G}}(E_2).
\]
A general class therefore has the form
\[
\mathcal{L} \ \equiv \ L + a_1\theta_{\tilde G} + b_1\delta + c_1\varepsilon_1 + c_2\varepsilon_2,
\qquad L\in Pic^1(\tilde X_0),\ \ a_1,b_1, c_1, c_2 \in \mathbb{Z}.
\]
 
The Picard group of $\mathcal{M}_{X_0}$ sits within that of $\widetilde{\mathcal M_{{\operatorname{GPB}}}}$ via the short exact sequence
\[
0 \to \operatorname{Pic}(\mathcal{M}_{X_0}) \longrightarrow \operatorname{Pic}(\widetilde{\mathcal M_{{\operatorname{GPB}}}})
\xrightarrow{\ \operatorname{res}_{\widetilde{D}} - \iota^* \circ \operatorname{res}_{\widetilde{D}}\ }
\operatorname{Pic}(\widetilde{D}),
\]
where $\widetilde{D} = D_1 \sqcup D_2 \sqcup E_1 \sqcup E_2$ is the normalization of the total boundary divisor, and the displayed map sends a line bundle $\mathcal{L}$ on $\widetilde{\mathcal M_{{\operatorname{GPB}}}}$ to the difference
\[
\mathcal{L}|_{\widetilde{D}} - \iota^*(\mathcal{L}|_{\widetilde{D}})\ \in \ \operatorname{Pic}(D_1) \oplus \operatorname{Pic}(D_2) \oplus \operatorname{Pic}(E_1) \oplus \operatorname{Pic}(E_2).
\]
The kernel of this map consists precisely of the $\iota$-invariant line bundles on $\widetilde{\mathcal M_{{\operatorname{GPB}}}}$  exactly those classes that descend to $\mathcal{M}_{X_0}$. So $\operatorname{Pic}(\mathcal{M}_{X_0})$ is identified with the subgroup of $\iota$-invariant classes in $\operatorname{Pic}(\widetilde{\mathcal M_{{\operatorname{GPB}}}})$.
 
Since $\theta_{\widetilde{G}}$ already descends to $\mathcal{M}_{X_0}$, it suffices to work out the descent condition for a class of the reduced form
\[
\mathcal L:= \ L + b_1\delta + c_1\varepsilon_1 + c_2\varepsilon_2,
\qquad L \in \operatorname{Pic}^1(\tilde{X}_0),\ \ b_1, c_1, c_2 \in \mathbb{Z}.
\]
Such $\mathcal L$ descends to $\mathcal M_{X_0}$ exactly when it is $\iota$-invariant on the normalization $\widetilde D = D_1 \cup D_2 \cup E_1 \cup E_2$ of the total boundary divisor. We derive this condition in two steps.
 
\begin{itemize}
    \item \textbf{Step 1 (restriction to $D_1$ and $D_2$).} Restricting $\mathcal L$ to $D_1$ and simplifying term by term,
\begin{align*}
\mathcal L\big|_{D_1} &= L\big|_{D_1} + b_1 \delta\big|_{D_1} + c_1 \varepsilon_1\big|_{D_1} + c_2 \varepsilon_2\big|_{D_1}\\
&= L\big|_{D_1} + b_1 \mathcal O_{D_1}(s_0) - b_1 [\det \mathbb{E}_{x^-}] + c_1 \mathcal O_{D_1}(s_{\infty}) \\
&= L\big|_{D_1} + b_1 \mathcal O_{D_1}(s_0) + c_1 \mathcal O_{D_1}(s_{\infty}) - b_1 \ell_2 \\
&= L\big|_{D_1} + b_1 \mathcal O_{D_1}(s_{\infty}) + b_1 \operatorname{Hom}(L_-, \mathbb{E}_{x^+}/L_+) + c_1 \mathcal O_{D_1}(s_{\infty}) - b_1 \ell_2 \\
&= L\big|_{D_1} + (b_1 + c_1) \mathcal O_{D_1}(s_{\infty}) + b_1 \operatorname{Hom}(L_-, \mathbb{E}_{x^+}/L_+) - b_1 \ell_2.
\end{align*}
 
Applying $\iota^*$,
\begin{align*}
\iota^*\bigl(\mathcal L\big|_{D_1}\bigr) &= \iota^*\bigl(L\big|_{D_1}\bigr) + (b_1 + c_1) \mathcal O_{D_2}(s_0') + b_1 \operatorname{Hom}(\mathbb{E}_{x^-}/L_-, L_+) - b_1 \iota^*\ell_2 \\
&= \iota^*\bigl(L\big|_{D_1}\bigr) + (b_1 + c_1) \mathcal O_{D_2}(s'_{\infty}) + (b_1 + c_1) \operatorname{Hom}(L_+, \mathbb{E}_{x^-}/L_-) + b_1 \operatorname{Hom}(\mathbb{E}_{x^-}/L_-, L_+) - b_1 \iota^*\ell_2 \\
&= \iota^*\bigl(L\big|_{D_1}\bigr) + (b_1 + c_1) \mathcal O_{D_2}(s'_{\infty}) + c_1 \operatorname{Hom}(L_+, \mathbb{E}_{x^-}/L_-) - b_1 \iota^*\ell_2.
\end{align*}
 
On the other side, restricting $\mathcal L$ to $D_2$ in the same way gives
\begin{align*}
\mathcal L\big|_{D_2}&=L\big|_{D_2}+b_1\delta\big|_{D_2}+c_2\mathcal O_{D_2}(E_2\cap D_2)\\
&=L\big|_{D_2}+b_1\bigl(p_G^*\mathcal O(D_1)\otimes p_{B}^*\ell_1^{-1}\bigr)\big|_{D_2}+c_2\mathcal O_{D_2}(s'_{\infty})\\
&=L\big|_{D_2}+b_1\mathcal O_{D_2}(D_1\cap D_2)-b_1\ell_1+c_2\mathcal O_{D_2}(s'_{\infty})\\
&=L\big|_{D_2}+b_1\mathcal O_{D_2}(s'_0)-b_1\ell_1+c_2\mathcal O_{D_2}(s'_{\infty})\\
&=L\big|_{D_2}+(b_1+c_2)\mathcal O_{D_2}(s'_{\infty})+b_1\operatorname{Hom}(L_+, \mathbb{E}_{x^-}/L_-)-b_1\ell_1.
\end{align*}
 
Imposing $\iota^*(\mathcal L|_{D_1}) = \mathcal L|_{D_2}$ and comparing term by term along the tower of morphisms $\widetilde{G}\to G\to B\to P$ gives
\[
b_1+c_2=b_1+c_1 \implies c_1=c_2,\ b_1=c_1,
\]
together with
\[
\iota^*\bigl(L\big|_{D_1}\bigr)-L\big|_{D_2}=b_1\bigl(\iota^*\ell_2-\ell_1\bigr).
\]
 
    \item \textbf{Step 2 (restriction to $E_1$ and $E_2$).} Substituting $b_1=c_1=c_2$ from Step 1, restriction of $\mathcal L$ to $E_1$ gives
\begin{align*}
\mathcal L\big|_{E_1} &= L\big|_{E_1} + b_1 \delta\big|_{E_1} + b_1 \mathcal O(E_1)\big|_{E_1} \\
&= L\big|_{E_1} - b_1\ell_2 + b_1 \mathcal O_{E_1}(-H_1) = L\big|_{E_1} - b_1 \ell_2 - b_1 \mathcal O_{E_1/B}(1),
\end{align*}
so that
\begin{align*}
\iota^*\bigl(\mathcal L\big|_{E_1}\bigr) &= \iota^*\bigl(L\big|_{E_1}\bigr) - b_1 \iota^*\mathcal O_{E_1/B}(1) - b_1 \iota^*\ell_2 \\
&= \iota^*\bigl(L\big|_{E_1}\bigr) - b_1 \mathcal O_{E_2/B}(1) - b_1 \iota^*\ell_2,
\end{align*}
while restriction to $E_2$ gives
\[
\mathcal L\big|_{E_2} = L\big|_{E_2} - b_1\ell_1 + b_1 \mathcal O_{E_2}(-H_2) = L\big|_{E_2} - b_1 \mathcal O_{E_2/B}(1) - b_1 \ell_1.
\]
 
Imposing $\iota^*(\mathcal L|_{E_1}) \cong \mathcal L|_{E_2}$ gives
\[
\iota^*\bigl(L\big|_{E_1}\bigr) - L\big|_{E_2}=b_1\bigl(\iota^*\ell_2-\ell_1\bigr).
\]
\end{itemize}
 
Recall that the line bundles $\ell_1,\ell_2$ are pulled back from $\operatorname{Pic}^1(\tilde X_0)$, and the involution $\iota$ acts on $\operatorname{Pic}^1(\tilde{X}_0)$ by the translation
\[
t_{x^--x^+}\colon \operatorname{Pic}^1(\tilde{X}_0)\to \operatorname{Pic}^1(\tilde{X}_0), \qquad L \longmapsto L\otimes \mathcal O_{\tilde{X}_0}(x^--x^+).
\]
Under this translation both $\ell_1$ and $\ell_2$ are invariant, i.e. $\iota^*\ell_i=\ell_i$, so the condition from Step 2 simplifies to
\[
t_{x^--x^+}^*(L) - L=b_1(\ell_2-\ell_1).
\]
 
Combining this with the short exact sequence above, we conclude that $\operatorname{Pic}(\mathcal{M}_{X_0})$ is the subgroup of $\operatorname{Pic}(\widetilde{\mathcal M_{{\operatorname{GPB}}}})$ generated by:
\begin{itemize}
    \item the descended theta line bundle $\theta_{\widetilde{G}}$;
    \item classes $L+b_1(\delta+\varepsilon_1+\varepsilon_2)$ with $L \in \mathcal Pic^1(\tilde X_0)$, $b_1\in\mathbb Z$, satisfying $t_{x^--x^+}^*(L) - L=b_1(\ell_2-\ell_1)$.
\end{itemize}
 
This proves the following theorem.
 
\

\begin{theorem}
The Picard group of $\mathcal{M}_{X_0}$ is the subgroup of $\operatorname{Pic}(\widetilde{\mathcal M_{{\operatorname{GPB}}}})$ generated by the following two types of line bundles:
\[
\theta_{\widetilde{G}}, \text{and}~(L + b(\delta + \varepsilon_1 + \varepsilon_2)),
\]
where $L \in \mathcal Pic^1(\tilde{X}_0)$ and $b \in \mathbb{Z}$ satisfy
\[
t_{x^- - x^+}^*(L) - L = b(\ell_2 - \ell_1).
\]
\end{theorem}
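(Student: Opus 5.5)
The plan is to realize $\operatorname{Pic}(\mathcal M_{X_0})$ as the kernel of the descent map
\[
\operatorname{Pic}(\widetilde{\mathcal M_{\operatorname{GPB}}})\xrightarrow{\ \operatorname{res}_{\widetilde D}-\iota^*\circ\operatorname{res}_{\widetilde D}\ }\operatorname{Pic}(\widetilde D),
\]
which is the standard seminormal/conductor-square description. It applies because $\mathcal M_{X_0}$ is obtained from its smooth normalization by gluing $\widetilde D$ to itself along $\iota$, as in \cite[\S5]{Kollar2013}. Exactness at $\operatorname{Pic}(\mathcal M_{X_0})$ follows from the Mayer--Vietoris sequence for $\mathcal O^*$ attached to the pushout square. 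Injectivity on the left follows because the units $H^0(\widetilde{\mathcal M_{\operatorname{GPB}}},\mathcal O^*)$ and $H^0(\widetilde D,\mathcal O^*)$ are just constants, by properness and connectedness. Hence the only thing to determine is which classes are $\iota$-invariant on $\widetilde D$.

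\textbf{Step 1: parametrize $\operatorname{Pic}$ of the normalization.} Using the projective-bundle and blow-up formulas along the tower $\widetilde G\to G\to B\to P$, every class can be written uniquely as
\[
L+a\theta_{\widetilde G}+b_1\delta+c_1\varepsilon_1+c_2\varepsilon_2 .
\]
Here $G\to B$ is a Grassmannian bundle, contributing $\delta$. The map $B\to P$ has Picard contribution $\theta$ relative to $\operatorname{Pic}(P)$, by Drezet--Narasimhan via Proposition~\ref{prop:DN-theta-pullback}. The two blow-ups contribute $\varepsilon_1$ and $\varepsilon_2$.

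\textbf{Step 2: dispose of $\theta_{\widetilde G}$.} By the computation $\nu^*\theta_{\mathcal M_{\mathrm{GVB}}}\cong\theta_{\widetilde G}$, this class is a pullback from $\mathcal M_{X_0}$ and is automatically invariant. So we may set $a=0$.

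\textbf{Step 3: invariance on the $D$-pieces.} For the reduced class $\mathcal L=L+b_1\delta+c_1\varepsilon_1+c_2\varepsilon_2$, I would restrict to each of the four components of $\widetilde D$, using the restriction formulas established above. On $D_1$ and $D_2$ these are the identifications $\varepsilon_i|_{D_i}=\mathcal O(s_\infty)$, $\varepsilon_i|_{D_j}=0$ for $i\ne j$, and the relation of $\delta|_{D_i}$ to $\mathcal O(s_0)$ and $\ell_j$. The key input is the computation of $\iota^*\mathcal O_Y(E_i)|_{D_i}$, which exchanges the zero and infinity sections up to a $\operatorname{Hom}(L_\pm,\mathbb E_{x^\mp}/L_\mp)$ twist. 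I would write $\iota^*(\mathcal L|_{D_1})$ and $\mathcal L|_{D_2}$ in the direct sum decomposition of $\operatorname{Pic}(D_2)$, namely $\operatorname{Pic}(B)$, the two tautological quotient classes, and $\mathcal O_{D_2/\mathrm{PBun}}(1)$. Comparing the relative $\mathcal O(1)$ coefficients gives $c_1=c_2$, and comparing the flag classes gives $b_1=c_1$. What remains is an equation in $\operatorname{Pic}$ of the base.

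\textbf{Step 4: invariance on the $E$-pieces.} On $E_1$ and $E_2$, both $\mathbb P^3$-bundles over $B$, the normal bundle gives $\varepsilon_i|_{E_i}=\mathcal O_{E_i/B}(-1)$, and $\iota$ carries $\mathcal O_{E_1/B}(1)$ to $\mathcal O_{E_2/B}(1)$. The relative parts therefore match automatically once $b_1=c_1=c_2$. The base parts yield the same residual condition $\iota^*L-L=b_1(\iota^*\ell_2-\ell_1)$.

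\textbf{Main obstacle.} The hardest step is identifying how $\iota$ acts on classes pulled back from $P$ and on $\ell_1,\ell_2$. On $E_i$ the involution is tensoring by $\mathcal O(x^--x^+)$, so on the base it induces translation $t_{x^--x^+}$. On the $D$-strata, however, $\iota$ is a composite of Hecke modifications, and one must check that it induces the same map on $\operatorname{Pic}(P)$. This should hold because the two elementary transformations change $\det E$ by exactly $\mathcal O(x^--x^+)$. One must also justify $\iota^*\ell_i=\ell_i$. My first attempt would be to note that $\ell_i=\det\mathbb E_{x^\mp}$ is algebraically equivalent to its translate, and then check the identification directly on a universal family, where the normalization of the universal bundle has to be tracked carefully. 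With this in hand, the kernel consists exactly of $\theta_{\widetilde G}$ together with the classes $L+b(\delta+\varepsilon_1+\varepsilon_2)$ satisfying $t^*_{x^--x^+}L-L=b(\ell_2-\ell_1)$, as claimed.
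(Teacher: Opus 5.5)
Your outline is the paper's own: descend along the normalization, discard $\theta_{\widetilde G}$, then restrict to the four pieces of $\widetilde D$ and compare coefficients. However, two steps fail as you state them. The paper does not repair them, because it asserts the same identifications without proof.

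\textbf{Step 4.} The check you propose in your last paragraph fails. On $E_1\to E_2$ the involution is $E\mapsto E\otimes\mathcal O(x^--x^+)$. For a rank-two bundle this multiplies $\det E$ by $\mathcal O(x^--x^+)^{\otimes 2}$, so on $P$ it covers $t_{2(x^--x^+)}$. By contrast, the Hecke pair on the $D$-strata (one elementary transformation down, one up) changes $\det E$ by a single $\mathcal O(\pm(x^--x^+))$.

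There is a second problem. The map $[\phi]\mapsto[\phi']$ is projectively the adjugate, which is linear only as a map $\operatorname{Hom}(\mathbb E_{x^+},\mathbb E_{x^-})\to\operatorname{Hom}(\mathbb E_{x^-},\mathbb E_{x^+})\otimes(\det\mathbb E_{x^+})^{-1}\otimes\det\mathbb E_{x^-}$. So $\iota$ does not match $\mathcal O_{E_1/B}(1)$ with $\mathcal O_{E_2/B}(1)$ on the nose, but only up to a twist by $\pm(\ell_2-\ell_1)$. That is exactly the class in your residual condition, so it cannot be dropped.

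Hence the $E$-pieces do not reproduce the $D$-condition. They give $t^*_{2(x^--x^+)}L-L=m(\ell_2-\ell_1)$ for some integer $m$ that must actually be computed. In that computation, $\iota^*\ell_i$ has to be determined by comparing the twisted or Hecke-transformed universal families. Algebraic equivalence gives nothing in $\operatorname{Pic}$, and only $\ell_2-\ell_1$, not each $\ell_i$ separately, is independent of the normalization of the universal bundle.

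By the theorem of the square, $t^*_{2(x^--x^+)}L-L=2(t^*_{x^--x^+}L-L)$. So the $E$-condition follows from the $D$-condition exactly when $m=2b$; otherwise the two together force $(2b-m)(\ell_2-\ell_1)=0$. This needs a genuine verification, not an assumed match.

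\textbf{Injectivity.} The space $\widetilde D=D_1\sqcup D_2\sqcup E_1\sqcup E_2$ is disconnected, so $H^0(\widetilde D,\mathcal O^*)=(\mathbb C^*)^4$ and your units argument gives nothing. The correct Milnor square uses the conductor $D\subset Y$, which is a connected chain, together with its image $C=\pi(D)$. It yields only $\operatorname{Pic}(\mathcal M_{X_0})\hookrightarrow\operatorname{Pic}(Y)\times\operatorname{Pic}(C)$.

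Now $C$ is itself non-normal along $\pi(M^2)$. By the description of $\iota$, each point of $M^2$ has three preimages in $Y$, one in each of $\overline{D_1}\cap\overline{D_2}$, $E_1\cap\overline{D_1}$ and $E_2\cap\overline{D_2}$. Hence it has three preimages in the normalization $D_1\sqcup E_1$ of $C$: two in $D_1$ and one in $E_1$. The units--Pic sequence for $C$ then embeds $\operatorname{coker}\bigl((\mathbb C^*)^2\times\mathbb C^*\to(\mathbb C^*)^3\bigr)\cong\mathbb C^*$ into $\operatorname{Pic}(C)$.

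These line bundles are trivial on $D_1$ and $E_1$, hence trivial on $D$, whose dual graph is a tree. Paired with $\mathcal O_Y$, they define nontrivial line bundles on $\mathcal M_{X_0}$ whose pullback to $Y$ is trivial. So $\operatorname{Pic}(\mathcal M_{X_0})\to\operatorname{Pic}(Y)$ has a $\mathbb C^*$ in its kernel. This comes from a loop in the dual complex of the gluing, just as for the compactified Jacobian of $X_0$.

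Consequently, even after Step 4 is fixed, your descent computation can only identify the image of $\operatorname{Pic}(\mathcal M_{X_0})$ in $\operatorname{Pic}(Y)$. It cannot identify $\operatorname{Pic}(\mathcal M_{X_0})$ itself as a subgroup.
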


\

\

\section{Virtual Poincare Polynomial}
In this section we compute the virtual Poincaré polynomial of the moduli stack \(GVB\) of Gieseker vector bundles of rank 2 and degree 1 on the nodal curve \(X_0\). We use the Whitney stratification of \(GVB\) and the relation with the moduli stack \(TF\) of torsion-free sheaves via the natural projective birational morphism \(\theta \colon GVB \to TF\). The computation relies on scissor relations, fibration structures, and the Harder--Narasimhan stratification on the torsion-free side.

Virtual Poincaré Polynomial (also called virtual Poincaré series or motivic Poincaré polynomial) is a powerful invariant used in algebraic geometry and topology to study the ``size'' or cohomology of possibly singular or non-compact spaces, especially moduli spaces and stacks.

\subsection*{Virtual Poincar\'e polynomial}(\cite{Mustata2011Lecture8})

The construction has two layers: first the Grothendieck ring, then a ring homomorphism out of it.

\paragraph{Grothendieck ring.} Let $k$ be a field. $K_0(\mathrm{Var}/k)$ is the free abelian group on isomorphism classes $[X]$ of varieties over $k$, modulo
\[
[X] = [Y] + [X \smallsetminus Y]
\]
whenever $Y \subset X$ is closed, with ring structure $[X]\cdot[Y] = [(X\times Y)_{\mathrm{red}}]$. Write $\mathbb{L} := [\mathbb{A}^1_k]$.

\paragraph{Hodge--Deligne polynomial.} For $k=\mathbf{C}$ (or any algebraically closed field of characteristic $0$), Bittner's presentation theorem (via weak factorization) shows that $K_0(\mathrm{Var}/\mathbf{C})$ is generated by classes of smooth connected projective varieties, with relations arising only from blow-ups along smooth centers. This allows one to \emph{define} a ring homomorphism by specifying its value on smooth projective varieties and checking compatibility with blow-ups. Applied with values in $\mathbf{Z}[u,v]$, this produces the unique ring homomorphism
\[
E(-,u,v) : K_0(\mathrm{Var}/\mathbf{C}) \to \mathbf{Z}[u,v]
\]
such that for $X$ smooth projective,
\[
E(X,u,v) = \sum_{p,q=0}^{\dim X} (-1)^{p+q}\, h^{p,q}(X)\, u^p v^q,
\]
where $h^{p,q}(X) = \dim H^q(X,\Omega^p_X)$. (The general definition, for arbitrary, possibly singular, non-proper complex varieties, uses the mixed Hodge structure on compactly-supported singular cohomology $H^*_c(X^{\mathrm{an}},\mathbf{Q})$; the formula above is the special case where that mixed Hodge structure is pure, i.e.\ $X$ smooth projective.)

\smallskip

\paragraph{Virtual Poincar\'e polynomial.}
\[
P_{\mathrm{vir}}(X,t) := E(X,t,t)
\]
This is the specialization $u=v=t$, giving a ring homomorphism $K_0(\mathrm{Var}/\mathbf{C}) \to \mathbf{Z}[t]$. Equivalently  and this is the operational characterization  it is the unique invariant that is
\begin{enumerate}
    \item[(i)] \textbf{additive}: $P_{\mathrm{vir}}(X,t) = P_{\mathrm{vir}}(Y,t) + P_{\mathrm{vir}}(X\smallsetminus Y,t)$ for $Y \subset X$ closed;
    \item[(ii)] \textbf{multiplicative}: $P_{\mathrm{vir}}(X\times Y,t) = P_{\mathrm{vir}}(X,t)\,P_{\mathrm{vir}}(Y,t)$;
    \item[(iii)] equal to the stated formula on smooth projective $X$, which for smooth projective $X$ reduces, by the Hodge decomposition, to
    \[
    P_{\mathrm{vir}}(X,t) = \sum_{i\ge0} (-1)^i \dim_{\mathbf{Q}} H^i(X,\mathbf{Q})\, t^i,
    \]
    with $P_{\mathrm{vir}}(X,1) = \chi_{\mathrm{top}}(X)$.
\end{enumerate}

We also suggest \cite{BrionPeyre2002} for definition of the virtual Poincar\'e polynomial on general (possibly singular, non-proper) complex algebraic varieties, and \cite{Ekedahl2009, Joyce2008Handout, BehrendDhillon2007} for its extension to algebraic stacks.

\subsection{A Harder--Narasimhan Stratification}

Having described the natural Whitney stratification on the moduli space/stack of Gieseker vector bundles $\mathcal M_{X_0}$, we now turn to a different stratification of the moduli problem, one governed not by the singularities but by the (in)stability of the underlying sheaves. This Harder--Narasimhan stratification will be the key tool in the next section for computing the virtual Poincar\'e polynomial of $GVB$, since it allows us to compare $GVB$ with the (better understood) moduli stack $TF$ of torsion-free sheaves via the morphism $\theta$, and to organize the unstable locus of each stack into strata indexed by the numerical type of the Harder--Narasimhan filtration.

We first recall the relevant properties of $\theta$ and its behavior on the two natural stratifications of $GVB$ and $TF$ by singularity depth, and then introduce the Harder--Narasimhan strata on $TF$ and their lifts to $GVB$.

\subsubsection{\textbf{The morphism $\theta\colon GVB\to TF$}}

Let $GVB$ denote the moduli stack of Gieseker vector bundles of rank $2$ and degree $1$, and let $TF$ denote the moduli stack of torsion-free sheaves of rank $2$ and degree $1$. There is a projective birational morphism
\[
\theta\colon GVB \longrightarrow TF.
\]
We record the following well-known features of $\theta$.

\begin{enumerate}
    \item The stack $GVB$ admits a Whitney stratification by singularity depth,
    \[
    GVB_0 := GVB \ \supsetneq\ GVB_1 := \partial(GVB) \ \supsetneq\ GVB_2 := \partial^2(GVB),
    \]
    where $\partial(-)$ denotes the singular locus of the corresponding stack. Moreover, $GVB_2$ is a smooth stack.

    \item Likewise, $TF$ admits a Whitney stratification
    \[
    TF_0 := TF \ \supsetneq\ TF_1 := \partial(TF) \ \supsetneq\ TF_2 := \partial^2(TF),
    \]
    and $TF_2$ is a smooth stack.

    \item The morphism $\theta$ restricts to an isomorphism
    \[
    \theta \colon GVB_0 \setminus GVB_2 \ \xrightarrow{\ \sim\ }\ TF_0 \setminus TF_2,
    \]
    while over the deepest stratum, $\theta \colon GVB_2 \to TF_2$ is a Zariski-locally trivial $\mathbb{P}^3$-bundle. This $\mathbb{P}^3$-bundle admits the following explicit description. First, there is an isomorphism of stacks
    \[
    TF_2 \ \xrightarrow{\ \sim\ }\ \operatorname{Bun}_{2,-1}(\tilde{X}_0), \qquad \mathcal{F} \ \longmapsto\ \pi^*\mathcal{F}/\mathrm{tor}.
    \]
    Let $\mathcal{E}$ denote the universal bundle over $\tilde{X}_0 \times \operatorname{Bun}_{2,-1}(\tilde{X}_0)$, and write $\mathcal{E}_{x_i} := \mathcal{E}|_{x_i \times \operatorname{Bun}_{2,-1}(\tilde{X}_0)}$ for $i = 1,2$. Then
    \[
    GVB_2 \ \cong\ \mathbb{P}\bigl(\operatorname{Hom}(\mathcal{E}_{x_1}, \mathcal{E}_{x_2})\bigr).
    \]
\end{enumerate}

\subsubsection{\textbf{Harder--Narasimhan stratification on $TF$}}

An unstable torsion-free sheaf $\mathcal{F}$ admits a Harder--Narasimhan filtration
\[
0 \ \subset\ \mathcal{F}_1 \ \subset\ \mathcal{F}_0 := \mathcal{F},
\]
such that
\[
\mu(\mathcal{F}_1) := \deg(\mathcal{F}_1) \ >\ \mu(\mathcal{F}_0/\mathcal{F}_1) = \deg(\mathcal{F}_0/\mathcal{F}_1) \ >\ \tfrac{1}{2}.
\]
Write $d_1 := \deg(\mathcal{F}_1)$ and $d_2 := \deg(\mathcal{F}_0/\mathcal{F}_1)$, and denote the corresponding locally closed stratum by $HN_{d_1,d_2}$. There is a morphism
\[
HN_{d_1,d_2} \ \longrightarrow\ TF_{1,d_1} \times TF_{1,d_2},
\]
whose fiber over a pair $(L_1, L_2) \in TF_{1,d_1} \times TF_{1,d_2}$ is isomorphic to the linear stack
\[
\left[\, \frac{\operatorname{Ext}^1(L_2, L_1)}{\operatorname{Hom}(L_2, L_1)} \,\right].
\]
Here we recall that $d_1 \geq 1/2$ and $d_2 = 1 - d_1$.

\subsubsection{\textbf{Dimension of $HN_{d_1,1-d_1}$}}

\medskip

\paragraph{Step 1: Dimension of the fibers $TF_{1,d}$.}

For a rank-$1$ torsion-free sheaf on the nodal curve $X_0$ (arithmetic genus $g$), the endomorphism ring is always $\operatorname{End} = \mathbb{C}$  this holds both for line bundles on $X_0$ and so the stabilizer at every point of $TF_{1,d}$ is exactly $\mathbb{C}^*$. Hence $TF_{1,d}$ is, generically and along its boundary alike, a $\mathbb{G}_m$-gerbe over its coarse space, which is the compactified Jacobian of $X_0$ of dimension $p_a(X_0) = g$.

\medskip

\paragraph{Step 2: Dimension of the fiber $\bigl[\operatorname{Ext}^1(L_2,L_1)/\operatorname{Hom}(L_2,L_1)\bigr]$.}

\begin{itemize}
\item \textsf{Case 1: Both $L_1$ and $L_2$ are locally free:}For line bundles $L_1, L_2$ on $\tilde{X}_0$ of degrees $d_1, d_2$, Serre duality on the nodal curve $X_0$ gives
\[
\operatorname{Ext}^1(L_2,L_1) = H^1\bigl(X_0,\, L_1 \otimes L_2^{-1}\bigr), \qquad \operatorname{Hom}(L_2,L_1) = H^0\bigl(X_0,\, L_1 \otimes L_2^{-1}\bigr).
\]
The line bundle $L_1 \otimes L_2^{-1}$ has degree
\[
d_1 - d_2 \;=\; d_1 - (1-d_1) \;=\; 2d_1 - 1.
\]
By Riemann--Roch on $X_0$,
\[
\chi\bigl(L_1 \otimes L_2^{-1}\bigr) \;=\; h^0 - h^1 \;=\; (2d_1-1) + 1 - g \;=\; 2d_1 - g.
\]
Hence, the dimension of the linear (quotient) stack $\bigl[\operatorname{Ext}^1/\operatorname{Hom}\bigr]$ is
\[
\dim\bigl[\operatorname{Ext}^1(L_2,L_1)/\operatorname{Hom}(L_2,L_1)\bigr] \;=\; h^1 - h^0 \;=\; -\chi\bigl(L_1\otimes L_2^{-1}\bigr) \;=\; g - 2d_1.
\]

This locus is isomorphic to the $\bigl[\operatorname{Ext}^1(L_2,L_1)/\operatorname{Hom}(L_2,L_1)\bigr]$-bundle stack over $(TF_{1,d_1}\setminus Sing(TF_{1,d_1})) \times (TF_{1,1-d_1}\setminus Sing(TF_{1,1-d_1})$. \textbf{Let us denote this locus by $H^0_{d_1, 1-d_1}$.} 

\

\item \textsf{Case 2: exactly one of $L_1$ and $L_2$ is locally free:} Suppose that exactly one of $L_1$ and $L_2$ is locally free and the other is of local type $\mathfrak m$, where $\mathfrak m$ denotes the maximal ideal at the node. Then from \cite[Lemma 2.5]{Bhosle2006}, it follows that
$$\deg(\mathcal Hom(L_2, L_1))=2d_1-1 \qquad\text{and}\qquad \dim(\operatorname{Ext}^1(L_2, L_1))=\dim(H^1(X_0, \mathcal Hom(L_2, L_1))).$$
Therefore, using Riemann--Roch, we get $\dim(\operatorname{Ext}^1(L_2, L_1))-\dim(\operatorname{Hom}(L_2, L_1))=g-2d_1$. This locus is isomorphic to the $\bigl[\operatorname{Ext}^1(L_2,L_1)/\operatorname{Hom}(L_2,L_1)\bigr]$-bundle stack over
\[
\bigl(TF_{1,d_1}\setminus \operatorname{Sing}(TF_{1,d_1})\bigr)\times \operatorname{Sing}(TF_{1,1-d_1}) \ \coprod\ \operatorname{Sing}(TF_{1,d_1})\times\bigl(TF_{1,1-d_1}\setminus \operatorname{Sing}(TF_{1,1-d_1})\bigr).
\]

\

\item \textsf{Case 3: Both $L_1$ and $L_2$ are not locally free:}
Suppose that both $L_1$ and $L_2$ are of local type $m$, where $m$ denotes the maximal ideal at the node. Then from \cite[Lemma 2.5]{Bhosle2006}, it follows that 

$$deg(\mathcal Hom(L_2, L_1))=2d_1$$ and $$dim(Ext^1(L_2, L_1))=dim(H^1(X_0, \mathcal Hom(L_2, L_1)))+2$$

Therefore, using Riemann-Roch, we get $$dim(Ext^1(L_2, L_1))-dim(Hom(L_2, L_1))=g-2d_1+1.$$ This locus is isomorphic to the $\bigl[\operatorname{Ext}^1(L_2,L_1)/\operatorname{Hom}(L_2,L_1)\bigr]$-bundle stack over $Sing(TF_{1,d_1})\times Sing(TF_{1,1-d_1})\cong Pic^{d_1}(\widetilde{X_0})\times Pic^{1-d_1}(\widetilde{X_0})$. \textbf{Let us denote this locus by $H^2_{d_1, 1-d_1}$. }

\subsubsection*{Further stratification of $HN^2$}\label{FurStrat}

Write $HN^2 = HN^{2,0}\sqcup HN^{2,1}\sqcup HN^{2,2}$, according to whether the extension $E$ of $L_2$ by $L_1$ has local type $\mathcal O\oplus\mathcal O$, $\mathcal O\oplus\mathfrak m$, or $\mathfrak m\oplus\mathfrak m$ at the node; $\theta$ restricts to an isomorphism over $HN^{2,0}\sqcup HN^{2,1}$ and to a $\mathbb P^3$-bundle over $HN^{2,2}$.

Fix $(L_1,L_2)\in S_{d_1}\times S_{1-d_1}$. Since $L_1,L_2$ are locally free away from the node, $\mathcal Ext^1(L_2,L_1)$ is a skyscraper there, and the local-to-global spectral sequence (using $H^{\geq2}=0$ on the curve) gives the exact sequence
\[
0 \to H^1\bigl(X_0,\mathcal Hom(L_2,L_1)\bigr) \to \mathrm{Ext}^1_{X_0}(L_2,L_1) \xrightarrow{\ \rho\ } \mathcal Ext^1(L_2,L_1)_{\mathrm{node}}\cong k_\alpha\oplus k_\beta \to 0,
\]
so $\rho$ is precisely the restriction of a global extension class to its local isomorphism type at the node, with kernel $h^1=N-2$, $N=\dim\mathrm{Ext}^1_{X_0}(L_2,L_1)$. It is now straightforward to check that 
\[
HN^{2,0} = \rho^{-1}(\mathbb G_m\times\mathbb G_m), \qquad
HN^{2,1} = \rho^{-1}\bigl((\mathbb G_m\times\{0\})\sqcup(\{0\}\times\mathbb G_m)\bigr), \qquad
HN^{2,2} = \rho^{-1}(0,0).
\]
A linear splitting of $\rho$ trivializes each preimage as $\mathbb A^{h^1}\times(\cdots)$, and dividing by the automorphisms $\mathrm{Hom}(L_2,L_1)\cong\mathbb A^{e_0}$ (a uniform $t^{-2e_0}$) together with Riemann--Roch, $N-e_0=g-2d_1+1$, gives the three fibre classes (i.e., the fibres of the map $HN^{2,i}\to S_{d_1}\times S_{1-d_1}$)
\[
\mathbb P(HN^{2,0}\text{-fibre};t) = (t^2-1)^2\,t^{2(g-2d_1-1)}, \qquad
\mathbb P(HN^{2,1}\text{-fibre};t) = 2(t^2-1)\,t^{2(g-2d_1-1)},
\]
\[
\mathbb P(HN^{2,2}\text{-fibre};t) = t^{2(g-2d_1-1)},
\]
using $\mathbb P(\mathbb G_m\times\{0\};t)=\mathbb P(\{0\}\times\mathbb G_m;t)=t^2-1$ for the two disjoint punctured lines in $HN^{2,1}$, and $\mathbb P(\{(0,0)\};t)=1$ for $HN^{2,2}$. As a check, the three base classes sum to $\mathbb P(\mathbb A^2;t)=t^4$: $(t^2-1)^2+2(t^2-1)+1=t^4$.

Multiplying by $\mathbb P(S_{d_1};t)\,\mathbb P(S_{1-d_1};t)$ for $(L_1,L_2)$ ranging over $S_{d_1}\times S_{1-d_1}$:
\[
\ \begin{aligned}
\mathbb P(HN^{2,0};t) &= t^{2(g-2d_1-1)}(t^2-1)^2\,\mathbb P(S_{d_1};t)\mathbb P(S_{1-d_1};t), \\
\mathbb P(HN^{2,1};t) &= 2\,t^{2(g-2d_1-1)}(t^2-1)\,\mathbb P(S_{d_1};t)\mathbb P(S_{1-d_1};t), \\
\mathbb P(HN^{2,2};t) &= t^{2(g-2d_1-1)}\,\mathbb P(S_{d_1};t)\mathbb P(S_{1-d_1};t).
\end{aligned}\ 
\]
(On lifting to $\widetilde{HN}$: since $\theta$ is an isomorphism over $HN^{2,0}\sqcup HN^{2,1}$, those two classes pass through unchanged; only $\mathbb P(HN^{2,2};t)$ picks up the extra factor of $\mathbb P(\mathbb P^3;t)$ from the $\mathbb P^3$-bundle structure of $\theta$ there.)

\end{itemize}
\

\subsubsection{\textbf{Harder--Narasimhan stratification on $\mathrm{GVB}$}}

The Harder--Narasimhan strata on $\mathrm{TF}$ lift to $\mathrm{GVB}$ via $\theta$: we define
\[
\widetilde{HN}_{d_1,d_2} \ :=\ \theta^{-1}\bigl(HN_{d_1,d_2}\bigr).
\]

Recall that $HN_{d_1,1-d_1}$ is a union of three locally closed substacks,
\[
HN_{d_1,1-d_1} \;=\; HN^0_{d_1,1-d_1}\ \cup\ HN^1_{d_1,1-d_1}\ \cup\ HN^2_{d_1,1-d_1},
\]
and we have seen that the deepest piece splits further according to the local type of the extension $E$ at the node,
\[
HN^2_{d_1,1-d_1} \;=\; HN^{2,0}_{d_1,1-d_1}\ \sqcup\ HN^{2,1}_{d_1,1-d_1}\ \sqcup\ HN^{2,2}_{d_1,1-d_1},
\]
corresponding to local type $\mathcal O\oplus\mathcal O$, $\mathcal O\oplus\mathfrak m$, and $\mathfrak m\oplus\mathfrak m$ respectively. Altogether,
\[
HN_{d_1,1-d_1} \;=\; HN^0_{d_1,1-d_1}\ \sqcup\ HN^1_{d_1,1-d_1}\ \sqcup\ HN^{2,0}_{d_1,1-d_1}\ \sqcup\ HN^{2,1}_{d_1,1-d_1}\ \sqcup\ HN^{2,2}_{d_1,1-d_1}.
\]

This refines the earlier description of $\theta$: it is an isomorphism over the larger locus
\[
HN^0_{d_1,1-d_1}\ \cup\ HN^1_{d_1,1-d_1}\ \cup\ HN^{2,0}_{d_1,1-d_1}\ \cup\ HN^{2,1}_{d_1,1-d_1},
\]
and a $\mathbb P^3$-bundle only over the single remaining piece $HN^{2,2}_{d_1,1-d_1}$  \emph{not} over all of $HN^2_{d_1,1-d_1}$, and \emph{not} over $HN^{2,1}_{d_1,1-d_1}$ either, since the local type $\mathcal O\oplus\mathfrak m$ still leaves $\theta$ an isomorphism. The $\mathbb P^3$-degeneracy occurs precisely at the fully non-locally-free type $\mathfrak m\oplus\mathfrak m$.

\begin{align*}
\mathbb P\bigl(\widetilde{HN}_{d_1,1-d_1};t\bigr) \;=\;& \mathbb P\bigl(HN^0_{d_1,1-d_1};t\bigr) + \mathbb P\bigl(HN^1_{d_1,1-d_1};t\bigr) \\
&+ \mathbb P\bigl(HN^{2,0}_{d_1,1-d_1};t\bigr) + \mathbb P\bigl(HN^{2,1}_{d_1,1-d_1};t\bigr) \\
&+ \mathbb P(\mathbb P^3;t)\cdot\mathbb P\bigl(HN^{2,2}_{d_1,1-d_1};t\bigr),
\end{align*}
with
\[
\mathbb P\bigl(HN^{2,0}_{d_1,1-d_1};t\bigr) = t^{2(g-2d_1-1)}(t^2-1)^2\,\mathbb P(S_{d_1};t)\,\mathbb P(S_{1-d_1};t),
\]
\[
\mathbb P\bigl(HN^{2,1}_{d_1,1-d_1};t\bigr) = 2\,t^{2(g-2d_1-1)}(t^2-1)\,\mathbb P(S_{d_1};t)\,\mathbb P(S_{1-d_1};t),
\]
\[
\mathbb P\bigl(HN^{2,2}_{d_1,1-d_1};t\bigr) = t^{2(g-2d_1-1)}\,\mathbb P(S_{d_1};t)\,\mathbb P(S_{1-d_1};t).
\]
\

\

\subsubsection{\textbf{Computation of the total volume $\mathbb{P}(GVB;t)$ utilizing the Whitney stratification}}

We now compute the virtual Poincar\'e polynomial $\mathbb{P}(GVB;t)$ of the moduli stack $GVB$ of rank-$2$, degree-$1$ Gieseker vector bundles, using the Whitney stratification by degeneration depth,
\[
GVB \;=\; GVB_0 \;\supsetneq\; GVB_1 \;\supsetneq\; GVB_2 .
\]
By additivity of the virtual Poincar\'e polynomial under scissor relations,
\[
\mathbb{P}(GVB;t) \;=\; \mathbb{P}(GVB_0\setminus GVB_1;t) \;+\; \mathbb{P}(GVB_1\setminus GVB_2;t) \;+\; \mathbb{P}(GVB_2;t).
\]
We compute each of the three contributions in turn, using the fibration structures over moduli spaces on the normalization $\tilde X_0$.

\paragraph{1. The open stratum $GVB_0\setminus GVB_1$.}
This stratum parametrizes stable Gieseker bundles with no rational chain, i.e.\ ordinary stable bundles on the nodal curve $X_0$. It is a $\mathrm{GL}_2$-fibration over $\Bun_{\tilde X_0}(2,1)$, the moduli stack of rank-$2$, degree-$1$ vector bundles on the smooth normalization $\tilde X_0$: the fiber $\mathrm{GL}_2$ accounts for the gluing data at the two preimages $x^+,x^-$ of the node via the generalized parabolic structure. Hence
\[
\mathbb{P}(GVB_0\setminus GVB_1;t) \;=\; \mathbb{P}(\Bun_{\tilde X_0}(2,1);t)\cdot \mathbb{P}(\mathrm{GL}_2;t).
\]

\paragraph{2. The middle stratum $GVB_1\setminus GVB_2$.}
This stratum corresponds to Gieseker bundles with a rational chain of positive but minimal length (depth-$1$ degenerations), and is realized as a $\mathbb{C}^*$-bundle over $PBun_{\tilde X_0}$, the moduli stack of parabolic bundles on $\tilde X_0$.

To describe this explicitly, let $\mathbb E$ denote the universal bundle on $\tilde X_0 \times \Bun_{\tilde X_0}$, and let $\mathbb E_{x^+}$, $\mathbb E_{x^-}$ denote its restrictions along $x^+\times\Bun_{\tilde X_0}$ and $x^-\times\Bun_{\tilde X_0}$, respectively. Write $PBun_{\tilde X_0,x^+,x^-}$ for the moduli stack of quasi-parabolic bundles on $\tilde X_0$, with tautological parabolic subbundles $\mathbb L_{x^+}$ and $\mathbb L_{x^-}$. Consider the $\mathbb{C}^*$-bundle
\[
\operatorname{Isom}\bigl(\mathbb E_{x^+}/\mathbb L_{x^+},\ \mathbb L_{x^-}\bigr) \;\longrightarrow\; PBun_{\tilde X_0,x^+,x^-}.
\]
Then
\[
GVB_1\setminus GVB_2 \;\cong\; \operatorname{Isom}\bigl(\mathbb E_{x^+}/\mathbb L_{x^+},\ \mathbb L_{x^-}\bigr),
\]
and consequently
\[
\mathbb{P}(GVB_1\setminus GVB_2;t) \;=\; \mathbb{P}(PBun_{\tilde X_0};t)\cdot \mathbb{P}(\mathbb{C}^*;t) \;.
\]

\paragraph{3. The deepest stratum $GVB_2$.}
The closed stratum $GVB_2$, corresponding to maximal degeneration, is a
Zariski-locally trivial $\mathbb{P}^3$-bundle over $\Bun_{\tilde X_0}(2,-1)$,
the moduli stack of rank-$2$, degree-$(-1)$ bundles on the normalization
$\tilde X_0$. Indeed, by the description of $\theta$ above,
\[
GVB_2 \;\cong\; \mathbb{P}\bigl(\operatorname{Hom}(\mathcal{E}_{x^+},\mathcal{E}_{x^-})\bigr)
\;\longrightarrow\; TF_2 \;\cong\; \Bun_{\tilde X_0}(2,-1),
\]
where $\mathcal{E}$ is the universal bundle on $\tilde X_0\times\Bun_{\tilde X_0}(2,-1)$.
By multiplicativity for fibrations,
\[
\mathbb{P}(GVB_2;t) \;=\; \mathbb{P}(\mathbb{P}^3;t)\cdot\mathbb{P}(\Bun_{\tilde X_0}(2,-1);t).
\]

\paragraph{Combining the strata.}
Assembling the three contributions via the scissor relation above yields

\begin{equation}
\begin{aligned}
\mathbb{P}(GVB;t) \;=\;& \underbrace{\mathbb{P}(\Bun_{\tilde X_0}(2,1);t)\,\mathbb{P}(\mathrm{GL}_2;t)}_{GVB_0\setminus GVB_1}
\;+\; \underbrace{\mathbb{P}(\mathbb{C}^*;t)\,\mathbb{P}(PBun_{\tilde X_0};t)}_{GVB_1\setminus GVB_2} \\
&+\; \underbrace{\mathbb{P}(\mathbb{P}^3;t)\,\mathbb{P}(\Bun_{\tilde X_0}(2,-1);t)}_{GVB_2}.
\end{aligned}
\end{equation}

On the other hand, the virtual Poincar\'e polynomial also satisfies a recursive formula coming from the Harder--Narasimhan stratification, via scissor relations together with fibration multiplicativity:

\begin{equation}
\mathbb{P}(GVB;t) \;=\; \mathbb{P}(GVB^{\mathrm{ss}};t) \;+\; \sum_{\substack{d_1+d_2=1 \\ d_1>1/2}} \mathbb{P}\bigl(\widetilde{HN}_{d_1,1-d_1};t\bigr).
\end{equation}

where $GVB^{\mathrm{ss}}$ denotes the open substack of Gieseker-semistable bundles, whose coarse moduli space is the main component of $\mathcal{M}_{X_0}$.

Combining this with the formula for $\mathbb{P}(GVB;t)$ obtained from the Whitney stratification, together with the Harder--Narasimhan recursion,
\[
\mathbb{P}(GVB;t) \;=\; \mathbb{P}(GVB^{\mathrm{ss}};t) \;+\; \sum_{\substack{d_1+d_2=1\\ d_1>1/2}} \mathbb{P}\bigl(\widetilde{HN}_{d_1,1-d_1};t\bigr),
\]
we obtain the final formula for the virtual Poincar\'e polynomial of the semistable locus:

\smallskip

\begin{theorem}[Virtual Poincaré polynomial]
\label{thm:vpp}
Let $\mathrm{GVB}$ denote the moduli stack of Gieseker vector bundles of rank $2$ and degree $1$ on $X_0$, and write $g:=g(X_0)$. With $\mathbb{P}(A_d;t)=(1+t)^{2(g-1)}$ and $\mathbb{P}(S_d;t)=(1+t)^{2(g-1)}/(t^2-1)$, the virtual Poincaré polynomial of the Gieseker-semistable locus obtained from the Whitney stratification is
\begin{equation}
\mathbb{P}(GVB^{\mathrm{ss}};t)
= \frac{t^{2}(1+t)^{4g-7}\Bigl[(t^{4}+1)(1-t+t^{2})^{2(g-1)} - 2t^{2g}\Bigr]}{(t-1)^{3}}.
\label{eq:gvbss-v2}
\end{equation}
and
\begin{equation}
\mathbb{P}(\mathcal{M}_{X_0},t)
=(t^2-1)\,\mathbb{P}(GVB^{\mathrm{ss}};t)
= \frac{t^{2}(1+t)^{4g-6}\Bigl[(t^{4}+1)(1-t+t^{2})^{2(g-1)} - 2t^{2g}\Bigr]}{(t-1)^{2}}.
\label{eq:coarse-v2}
\end{equation}
\end{theorem}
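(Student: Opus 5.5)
The plan is to solve the two expressions for $\mathbb P(GVB;t)$ for $\mathbb P(GVB^{\mathrm{ss}};t)$. The first is the Whitney-stratification formula assembled just above; the second is the Harder--Narasimhan recursion. This gives
\[
\mathbb P(GVB^{\mathrm{ss}};t)=\mathbb P(\Bun_{\tilde X_0}(2,1);t)\,\mathbb P(\mathrm{GL}_2;t)+\mathbb P(\mathbb C^*;t)\,\mathbb P(PBun_{\tilde X_0};t)+\mathbb P(\mathbb P^3;t)\,\mathbb P(\Bun_{\tilde X_0}(2,-1);t)-\sum_{d_1\ge 1}\mathbb P\bigl(\widetilde{HN}_{d_1,1-d_1};t\bigr).
\]
Every term will be expressed through $\mathbb P(A_d;t)=(1+t)^{2(g-1)}$ (the Jacobian of $\tilde X_0$) and $\mathbb P(S_d;t)=\mathbb P(A_d;t)/(t^2-1)$. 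I will use the standard formulas $\mathbb P(\mathrm{GL}_2;t)=(t^4-1)(t^4-t^2)$, $\mathbb P(\mathbb C^*;t)=t^2-1$ and $\mathbb P(\mathbb P^3;t)=1+t^2+t^4+t^6$. For $PBun$ I will use $\mathbb P(PBun;t)=(1+t^2)^2\,\mathbb P(\Bun;t)$, since it is a $(\mathbb P^1)^2$-bundle over $\Bun$.

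\textbf{The stack $\Bun_{\tilde X_0}(2,\pm1)$.} For $\mathbb P(\Bun_{\tilde X_0}(2,\pm1);t)$, which is independent of the degree parity, I will take the Harder--Narasimhan (Atiyah--Bott) expression on the genus $g-1$ curve rather than recompute it. That expression is
\[
\mathbb P(\Bun;t)=\frac{(1+t)^{2(g-1)}(1+t^3)^{2(g-1)}}{(t^2-1)(t^4-1)}\cdot t^{\text{(shift)}},
\]
in the virtual (compact-support) normalization, where infinite-dimensional stack classes are handled as in \cite{Ekedahl2009,BehrendDhillon2007}.

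\textbf{The Harder--Narasimhan strata.} For each $d_1\ge1$, $\mathbb P(\widetilde{HN}_{d_1,1-d_1};t)$ is the sum of five pieces. Two of them, $HN^0$ and $HN^1$, are fibre classes of dimension $g-2d_1$, that is, factors $t^{2(g-2d_1)}$ over the bases identified in Cases 1 and 2. For these bases I will use
\[
[TF_{1,d}]-[\mathrm{Sing}]=[\text{open Jacobian of }X_0]/(t^2-1),\qquad [\mathrm{Sing}]=\mathbb P(S_d;t).
\]
Here the generalized Jacobian of $X_0$ is a $\mathbb C^*$-extension of $\mathrm{Jac}(\tilde X_0)$, so its class is $(t^2-1)(1+t)^{2(g-1)}$. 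The remaining three pieces are $HN^{2,0}$, $HN^{2,1}$ and $HN^{2,2}$, with the displayed classes; the last carries the extra factor $\mathbb P(\mathbb P^3;t)$.

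\textbf{Summing over $d_1$.} The $d_1$-dependence of every piece is a factor $t^{-4d_1}$, so the sum $\sum_{d_1\ge1}$ is a geometric series. Formally it gives a factor $t^{-4}/(1-t^{-4})=1/(t^4-1)$; it should be read as a rational-function identity in $t$, as usual for such stacky sums.

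\textbf{Simplification and the coarse space.} Subtracting everything and clearing the common denominator $(t-1)^3(t+1)^{3}(t^2+1)$ should give the closed form \eqref{eq:gvbss-v2}. I expect $(1+t^3)^{2(g-1)}$ to factor as $(1+t)^{2(g-1)}(1-t+t^2)^{2(g-1)}$. The term $-2t^{2g}$ should come from the unstable contributions together with the deepest Whitney stratum. The second formula \eqref{eq:coarse-v2} then follows at once: the stable locus is a $\mathbb G_m$-gerbe over $\mathcal M_{X_0}$ (by coprimality), so $\mathbb P(GVB^{\mathrm{ss}})=\mathbb P(\mathcal M_{X_0})/\mathbb P(B\mathbb G_m)$ with $\mathbb P(B\mathbb G_m)=1/(t^2-1)$.

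\textbf{Main obstacle.} The hardest step is the bookkeeping of normalizations: consistent $t^{2\dim}$ shifts for the linear stacks $[\mathrm{Ext}^1/\mathrm{Hom}]$, and the correct base classes in Case 2, where two copies of mixed bases occur. A single mismatch leaves a residue that does not cancel. This is exactly the kind of error flagged in the acknowledgment, where $HN^{2,1}$ had been misplaced. To guard against it, I will first verify the identity at $g=2$ (so $\tilde X_0$ is elliptic) by direct symbolic expansion, and then check that the final expression has the expected degree $2\dim\mathcal M_{X_0}=2(4g-3)$. Only after these checks will I carry out the general algebraic simplification.
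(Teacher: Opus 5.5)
Your route is the same as the paper's. With exactly the inputs you list, the three Whitney strata give $\mathbb P(GVB;t)=(t^8+t^6+t^4+t^2)\,\mathbb P(\Bun_{\tilde X_0}(2,1);t)$, and for each $d_1$ the five HN pieces sum to $2(t^2+1)\,t^{2(g-2d_1+1)}\,\mathbb P(S_d;t)^2$ before the geometric series is taken. The step that fails is the one input you supply yourself. For the identity to hold, the stack must have class
\[
\mathbb P(\Bun_{\tilde X_0}(2,\pm1);t)=\frac{(1+t)^{2(g-1)}(1+t^3)^{2(g-1)}}{(t^2-1)^2(t^4-1)},
\]
with $(t^2-1)$ \emph{squared} and no shift. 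One of the two factors $1/(t^2-1)$ is the $B\mathbb G_m$ of scalar automorphisms. This is the same stacky factor already built into $\mathbb P(S_d;t)=\mathbb P(A_d;t)/(t^2-1)$ on the HN side. Your expression, with denominator $(t^2-1)(t^4-1)$, is the class of the $\mathbb G_m$-rigidified stack instead. Once the other inputs are fixed, the relation ``Whitney $=$ semistable $+$ HN'' determines $\mathbb P(\Bun)$ uniquely. Writing your $t^{\text{(shift)}}$ as $t^{s}$, your version differs from the required one by the factor $(t^2-1)\,t^{s}$, which is never $1$. Concretely, your Whitney term would be $t^{s+2}(t^4+1)(1+t)^{4g-6}(1-t+t^2)^{2(g-1)}/(t-1)^2$. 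This does not combine with the HN sum $2t^{2g+2}(1+t)^{4g-7}/(t-1)^3$ into the stated formula for $\mathbb P(GVB^{\mathrm{ss}};t)$. Your planned $g=2$ check would expose this, but as written the computation does not close. Relatedly, the $-2t^{2g}$ comes entirely from the HN sum. The deepest Whitney stratum enters only through the factor $t^2(t^2+1)(t^4+1)$, whose $(t^2+1)$ cancels against $t^4-1$ in $\mathbb P(\Bun)$.

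Your passage to the coarse space is also inverted. If $GVB^{\mathrm{ss}}\to\mathcal M_{X_0}$ is a $\mathbb G_m$-gerbe, then
\[
\mathbb P(GVB^{\mathrm{ss}};t)=\mathbb P(\mathcal M_{X_0},t)\cdot\mathbb P(B\mathbb G_m;t)=\frac{\mathbb P(\mathcal M_{X_0},t)}{t^2-1}.
\]
You wrote a quotient by $\mathbb P(B\mathbb G_m;t)$ instead. That gives $\mathbb P(\mathcal M_{X_0},t)=\mathbb P(GVB^{\mathrm{ss}};t)/(t^2-1)$, which contradicts the second formula of the statement. With these two corrections, your computation reduces line for line to the paper's.
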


\begin{example}
For $g=2$,
\[
\mathbb{P}(\mathcal{M}_{X_0},t)\big|_{g=2} \;=\; t^{10}+2t^{9}+3t^{8}+6t^{7}+8t^{6}+6t^{5}+3t^{4}+2t^{3}+t^{2}.
\]
For $g=3$,
\begin{align*}
\mathbb{P}(\mathcal{M}_{X_0},t)\big|_{g=3} \;=\;& t^{18}+4t^{17}+8t^{16}+16t^{15}+33t^{14}+56t^{13}+84t^{12}+116t^{11}\\
&+132t^{10}+116t^{9}+84t^{8}+56t^{7}+33t^{6}+16t^{5}+8t^{4}+4t^{3}+t^{2}.
\end{align*}
\end{example}
\begin{proof}
The class of the full stack of rank-$2$, degree-$1$ bundles on $\tilde{X}_{0}$ is
\begin{equation}
\mathbb{P}(\mathrm{Bun}_{\tilde{X}_{0}}(2,1);t)
=\frac{(1+t)^{2(g-1)}(1+t^{3})^{2(g-1)}}{(t^{2}-1)^{2}(t^{4}-1)}
= \frac{(1+t)^{4g-7}(1-t+t^{2})^{2(g-1)}}{(t-1)^{3}(1+t^{2})}.
\label{eq:Bun-full}
\end{equation}
The Whitney stratification gives
\[
\mathbb{P}(GVB;t)
=\mathbb{P}(\mathrm{Bun}_{\tilde{X}_{0}}(2,1);t)\cdot(t^{8}+t^{6}+t^{4}+t^{2})
= \frac{t^2(t^4+1)(1+t)^{4g-7}(1-t+t^2)^{2(g-1)}}{(t-1)^3},
\]
the $(1+t^2)$ in \eqref{eq:Bun-full} cancelling against the factor $t^8+t^6+t^4+t^2=t^2(t^2+1)(t^4+1)$.

Write $A=(1+t)^{2(g-1)}$ and $S=A/(t^{2}-1)$. With $\theta$ an isomorphism on $HN^0\cup HN^1\cup HN^{2,0}\cup HN^{2,1}$ and a $\mathbb P^3$-bundle only on $HN^{2,2}$ (the fully non-locally-free type $\mathfrak m\oplus\mathfrak m$ at the node; see the corrected stratification above),
\begin{align*}
\mathbb{P}(\widetilde{HN}^{0};t) &=t^{2(g-2d_{1})}A^{2},\\
\mathbb{P}(\widetilde{HN}^{1};t) &=2t^{2(g-2d_{1})}AS,\\
\mathbb{P}(\widetilde{HN}^{2,0};t) &=t^{2(g-2d_{1}-1)}(t^{2}-1)^{2}S^{2}=t^{2(g-2d_1-1)}A^2,\\
\mathbb{P}(\widetilde{HN}^{2,1};t) &=2t^{2(g-2d_1-1)}(t^2-1)S^2 = 2t^{2(g-2d_1-1)}AS,\\
\mathbb{P}(\widetilde{HN}^{2,2};t) &=\mathbb{P}(\mathbb{P}^{3};t)\,t^{2(g-2d_{1}-1)}S^{2}.
\end{align*}
Using $A=(t^2-1)S$ to write everything in terms of $S$, these five terms sum to
\begin{equation}
\mathbb{P}(\widetilde{HN}_{d_{1},1-d_{1}};t)
= 2(t^2+1)\,t^{2(g-2d_1+1)}\,S^2
= \frac{2(t^{2}+1)(1+t)^{4g-6}\,t^{2(g-2d_{1}+1)}}{(t-1)^{2}}.
\label{eq:HN-explicit-v2}
\end{equation}
The geometric series $\sum_{d_{1}\ge 1}t^{2(g-2d_{1}+1)}=t^{2}\cdot t^{2g}/(t^{4}-1)$ and $t^{4}-1=(t-1)(t+1)(1+t^{2})$ give, after the $(1+t^2)$ cancels against the numerator factor $(t^2+1)$,
\begin{equation}
\sum_{d_{1}\ge 1}\mathbb{P}(\widetilde{HN}_{d_{1},1-d_{1}};t)
= \frac{2t^{2g+2}(1+t)^{4g-7}}{(t-1)^{3}}.
\label{eq:HN-sum-v2}
\end{equation}
Subtracting \eqref{eq:HN-sum-v2} from $\mathbb{P}(GVB;t)$ gives \eqref{eq:gvbss-v2}.
\end{proof}

\

\subsection{An alternate method}
In the previous section, we computed $\mathbb P(GVB^{\mathrm{ss}};t)$ directly, applying the Harder--Narasimhan recursion to the Whitney-stratified stack $GVB$ itself. That method has its imporatance because it shows the nature of the complicated Harder Narasimhan stratification of the moduli stack of Gieseker vector bundles. 

As a cross-check, we now compute it by an independent route: first compute $P(TF^{\mathrm{ss}}_{2,1},t)$ on its own terms, running the same Harder--Narasimhan argument on $TF_{2,1}$, and then only pass to $GVB^{\mathrm{ss}}$ via the comparison map $\theta\colon GVB\to TF_{2,1}$ (an isomorphism away from the deepest stratum, a $\mathbb P^3$-bundle on it). Since $TF_{2,1}$'s own stratification carries no $\mathbb P^3$-twist internally, this computation is structurally simpler, and comparing against the previous section after reassembling via $\theta$ gives an independent check on both derivations.

Throughout, we keep every quantity symbolic and substitute closed forms only once, at the very end of \S\ref{sec:assembling}.

\paragraph{Notation.} Set
\[
B(t):=P(\mathrm{Bun}_{2,1},t),\qquad
S(t):=P(S_{d_1},t),\qquad
T(t):=P(TF_{1,d_1},t),
\]
$S(t)$ being independent of $d_1$. Unlike in an earlier draft, $T(t)$ is \emph{not} $(1+t+t^2)S(t)$: as established above, $TF_{1,d_1}$ decomposes as $(TF_{1,d_1}\setminus S_{d_1})\sqcup S_{d_1}$, with
\[
P(TF_{1,d_1}\setminus S_{d_1},t) = (1+t)^{2(g-1)}, \qquad S(t) = \frac{(1+t)^{2(g-1)}}{t^2-1},
\]
(the first since $\mathrm{Pic}^{d_1}(X_0)$, a $\mathbb G_m$-bundle over $\mathrm{Pic}^{d_1}(\tilde X_0)$ of class $(1+t)^{2(g-1)}(t^2-1)$, is itself a trivial $\mathbb G_m$-gerbe, so the two factors of $(t^2-1)$ cancel), so that
\[
T(t) = (1+t)^{2(g-1)} + \frac{(1+t)^{2(g-1)}}{t^2-1} = \frac{t^2(1+t)^{2(g-1)}}{t^2-1} = t^2\,S(t).
\]
Let
\[
\Pi(t):=P(\mathrm{GL}_2,t)+P(\mathbb P^1,t)^2P(\mathbb C^*,t)+1,\qquad
C(t):=P(\mathbb P^3,t)-1
\]
be the two stack-correction factors contributed by, respectively, the HN-semistable locus and the fiber of $\theta$ in the deepest stratum.

By the Harder--Narasimhan recursion on $TF_{2,1}$,
\begin{equation}
P(TF^{ss}_{2,1},t) = \Pi(t)\,B(t) \;-\;\sum_{d_1\ge1} t^{2(g-2d_1)}\Bigl[T(t)^2+(t^2-1)S(t)^2\Bigr]. \label{eq:alt-combined}
\end{equation}
With $T(t)=t^2S(t)$, the correction bracket collapses to a multiple of $S(t)^2$:
\[
T(t)^2+(t^2-1)S(t)^2 = Q(t)\,S(t)^2,\qquad Q(t):=t^4+t^2-1,
\]
and the sum over $d_1$ is the geometric series
\[
\Gamma(t,g):=\sum_{d_1\ge1} t^{2(g-2d_1)} = \frac{t^{2g}}{t^4-1}.
\]
Combining these gives the symbolic closed form
\begin{equation}
P(TF^{ss}_{2,1},t) = \Pi(t)\,B(t) \;-\; \Gamma(t,g)\,Q(t)\,S(t)^2. \label{eq:alt-symbolic}
\end{equation}

\paragraph{Closed forms (substituted only here).}
Inserting
\begin{align*}
\Pi(t) &= t^8, \\
C(t) &= t^2+t^4+t^6, \\
Q(t) &= t^4+t^2-1, \\
B(t) &= \frac{(1+t)^{2(g-1)}(1+t^3)^{2(g-1)}}{(t^2-1)^2(t^4-1)}, \\
S(t) &= \frac{(1+t)^{2(g-1)}}{t^2-1},
\end{align*}
and writing $1+t^3=(1+t)(1-t+t^2)$, both terms have denominator $(t^2-1)^2(t^4-1)$. The numerator is
\[
(1+t)^{4(g-1)}\Bigl[t^8(1-t+t^2)^{2(g-1)}-t^{2g}(t^4+t^2-1)\Bigr].
\]
Since $(t^2-1)^2(t^4-1)=(t-1)^3(t+1)^3(1+t^2)$, clearing the factor $(t+1)^3$ gives
\begin{equation}
\,P(TF^{ss}_{2,1},t) = \frac{(1+t)^{4g-7}\Bigl[t^8(1-t+t^2)^{2(g-1)}-t^{2g}\bigl(t^4+t^2-1\bigr)\Bigr]}{(t-1)^3(1+t^2)}\,\label{eq:alt-final}
\end{equation}
or, equivalently,
\begin{equation}
P(TF^{ss}_{2,1},t) = \frac{(1+t)^{4g-7}\Bigl[t^{2g}\bigl(t^4+t^2-1\bigr)-t^8(1-t+t^2)^{2(g-1)}\Bigr]}{(1-t)^3(1+t^2)}.
\end{equation}

Since $\gcd(2,1)=1$, the semistable stack is a $\mathbb{G}_m$-gerbe over its coarse moduli scheme, and the virtual Poincaré polynomial of the scheme is obtained by multiplying by the class of $\mathbb{C}^*$:
\begin{equation}
\mathbb{P}(\mathcal{M}_{\mathrm{TF}},t)
=(t^2-1)\,P(TF^{ss}_{2,1},t)
=\frac{-(1+t)^{4g-6}\Bigl[t^{2g}(t^4+t^2-1)-t^8(1-t+t^2)^{2(g-1)}\Bigr]}{(1-t)^2(1+t^2)}.
\label{eq:TF-scheme}
\end{equation}

For $g=2$,
\begin{equation}
\mathbb{P}(\mathcal{M}_{\mathrm{TF}},t)
=t^{10}+2t^9+2t^8+4t^7+6t^6+4t^5+t^4.
\end{equation}
For $g=3$,
\begin{equation}
\mathbb{P}(\mathcal{M}_{\mathrm{TF}},t)
=t^{18}+4t^{17}+7t^{16}+12t^{15}+25t^{14}+40t^{13}+52t^{12}+68t^{11}+77t^{10}+60t^9+29t^8+8t^7+t^6.
\end{equation}

\smallskip

\subsection{Assembling \texorpdfstring{$P(GVB^{\mathrm{ss}},t)$}{P(GVB\^{}ss,t)}}\label{sec:assembling}
The same scissor relation for $\theta$ gives, on the deepest stratum,
\[
P(TF_2^{ss},t) = P(\mathrm{Bun}^{ss}_{2,-1}, t),
\]
and hence
\begin{equation}
P(GVB^{ss},t)
= P(TF^{ss}_{2,1},t)
+ P(TF_2^{ss},t)\bigl(P(\mathbb{P}^3, t)-1\bigr).
\label{eq:final-step-symbolic}
\end{equation}
Tensoring by a line bundle of degree $-1$ identifies the deepest stratum with $\mathrm{Bun}^{ss}_{2,1}$, and $P(\mathbb{P}^3,t)-1=t^2+t^4+t^6$, so
\begin{equation}
\begin{aligned}
P(GVB^{ss},t)
&=\frac{(1+t)^{4g-7}\Bigl[t^{2g}(t^4+t^2-1)-t^8(1-t+t^2)^{2(g-1)}\Bigr]}{(1-t)^3(1+t^2)}\\
&\qquad+\frac{(t^2+t^4+t^6)\,(1+t)^{2(g-1)}\Bigl((1+t^3)^{2(g-1)}-t^{2(g-1)}(1+t)^{2(g-1)}\Bigr)}{(t^2-1)^2(t^4-1)}.
\end{aligned}
\end{equation}
The coarse moduli scheme is obtained by multiplying by the class of $\mathbb{C}^*$:
\begin{equation}
P(\mathcal{M}_{X_0},t)=(t^2-1)\,P(GVB^{ss},t).
\label{eq:coarse-v2}
\end{equation}
The denominator cancels, and the result is a polynomial of degree $2(4g-3)$ with leading coefficient $1$.

For $g=2$,
\begin{equation}
P(\mathcal{M}_{X_0},t)
=t^{10}+2t^9+3t^8+6t^7+8t^6+6t^5+3t^4+2t^3+t^2.
\end{equation}
For $g=3$,
\begin{equation}
\begin{aligned}
P(\mathcal{M}_{X_0},t)
&=t^{18}+4t^{17}+8t^{16}+16t^{15}+33t^{14}+56t^{13}\\
&\qquad+84t^{12}+116t^{11}+132t^{10}+116t^{9}+84t^{8}\\
&\qquad+56t^{7}+33t^{6}+16t^{5}+8t^{4}+4t^{3}+t^{2}.
\end{aligned}
\end{equation}

\begin{example}[A singular projective variety with vanishing constant term]
The virtual Poincaré polynomial is additive on strata, but it is not the ordinary Poincaré polynomial once the variety is singular. In particular, connectedness and projectivity do not force the constant term to be \(1\): a stratum isomorphic to a non-proper variety may contribute \(0\) or \(-1\), and these contributions need not cancel to \(1\) in the total. The nodal cubic is the standard illustration. 

Let \(C\subset\mathbb{P}^2\) be the nodal cubic
\[
y^2z=x^3+x^2z,
\]
with node \(p=[0:0:1]\). Thus \([C]=[\mathbb{P}^1]-[\mathrm{pt}]\) in the Grothendieck ring of varieties. Decompose \(C\) into the node and its complement. The complement is isomorphic to \(\mathbb{C}^*\), and
\[
\mathbb{P}(\mathbb{P}^1;t)=1+t^2
\]
gives
\[
\mathbb{P}(\mathbb{C}^*;t)
=\mathbb{P}(\mathbb{P}^1;t)-2\mathbb{P}(\mathrm{pt};t)
=(1+t^2)-2
=t^2-1.
\]
Additivity therefore yields
\[
\mathbb{P}(C;t)
=\mathbb{P}(C\setminus\{p\};t)+\mathbb{P}(\{p\};t)
=(t^2-1)+1
=t^2.
\]
The constant term vanishes. A similar cancellation is what occurs for the total stack above in our theorem on virtual Poincaré Polynomial.
\end{example}
\

\

\section{Intersection Poincar\'e Polynomial of $GVB_{(2,1)}$}

\subsubsection{Setup}
Let $\mathcal M_{X_0}$ be the Gieseker moduli space of stable rank-2, degree-1 bundles on $X_0$, of dimension $n$, with Whitney stratification
\[
\mathcal M_{X_0} = \overline{M^0} \supset \overline{M^1} \supset M^2,
\]
where $M^0$ is the open locus of stable bundles on $X_0$ itself, $\overline{M^1}$ (codimension $1$) is the locus of stable Gieseker bundles on the length-one expanded curve, and $M^2$ (codimension $2$) on the length-two expanded curve.

Before proceeding, we record the provenance of the stack $\mathrm{GVBD}$ used below. It is
the degeneration stack constructed by Kausz \cite{Kausz2005}, generalizing Gieseker's
original rank-$2$ construction \cite{Gieseker1984} to arbitrary rank: for a family of smooth
curves degenerating to a curve with a single node, Kausz builds a regular, flat moduli stack
$\mathrm{GVB}_n(C/B)$ of \emph{Gieseker vector bundles} of rank $n$ on the rational-bridge
modifications of the special fibre, exactly as in \S2.0.3--\S2.0.4 above; we write
$\mathrm{GVBD}$ for the normalisation of the closed fibre of the degeneration $\mathrm{GVB}_n(C/B)$. The elements of this stack are called Gieseker vector bundle data. The main structural theorem of
\cite{Kausz2005} identifies the stack $\mathrm{GVBD}$ with a Zariski-locally trivial
$KGL_n$-fibration over $\Bun_{\tilde X_0}(n,d)$, where $KGL_n$ is Kausz's canonical modular
compactification of $GL_n$ from \cite{Kausz2000}.

For $n=2$, $KGL_2$ is obtained in \cite{Kausz2000} by exactly \emph{two} successive
blow-ups of the Grassmanian compactification of $GL_2$. This is the same two-step
blow-up we carried out by hand in \S2 in passing from
$\operatorname{GPB}_{\tilde X_0}=\operatorname{Gr}_2(\mathbb E_{x^+}\oplus\mathbb E_{x^-})$
to $\widetilde{\operatorname{GPB}}_{\tilde X_0}$, blowing up $W_1$ and then $W_2$: $GL_2$
sits inside $\operatorname{Gr}_2(\mathbb C^2\oplus\mathbb C^2)$ as the locus of subspaces
that are graphs of isomorphisms, and the two loci Kausz blows up correspond, fibrewise, to
the rank-$0$ vertices $W_1,W_2$ of the boundary cones $D_1,D_2$ resolved in \S2. 

Under this identification, $\mathrm{GVBD}$ coincides with the moduli stack of
\emph{$H$-bundles} considered by Seshadri in the ICTP notes \cite{ICTP}: a Gieseker vector
bundle datum (GVBD) in the sense of \S2.0.4 is precisely an $H$-bundle, and in the
rank-and-degree-coprime case the two notions of (semi)stability agree, both being calibrated
so that the pushforward to $X_0$ is torsion-free of the correct rank and degree. This notion
of semistability  for Gieseker vector bundle data, equivalently for $H$-bundles  is
studied thoroughly by Sun \cite{Sun2008} in the coprime case, who constructs the resulting
coarse moduli space by treating the problem as one of variation of GIT: as the linearization
is varied, the (semi)stable locus changes only across finitely many walls, and the resulting
chambers are exactly the spaces $\mathcal M^{\alpha}_{\mathrm{GVBD}}$, $\alpha\in[0,1]$, of
our flip-flop diagram in \S4.4, with $\alpha=1$ the chamber whose coarse space is the blow up of the moduli space of GPBs $\mathcal M_{\mathrm{GPB}}$. 

\subsubsection{Setup}
Let $\mathcal M_{X_0}$ be the Gieseker moduli space of stable rank-2, degree-1 bundles on $X_0$, of dimension $n$, with Whitney stratification
\[
\mathcal M_{X_0} = \overline{M^0} \supset \overline{M^1} \supset M^2,
\]
where $M^0$ is the open locus of stable bundles on $X_0$ itself, $\overline{M^1}$ (codimension $1$) is the locus of stable Gieseker bundles on the length-one expanded curve, and $M^2$ (codimension $2$) on the length-two expanded curve. Let $\pi\colon Y\to \mathcal M_{X_0}$ be the normalization $Y := \mathcal M^{\alpha=1}_{\mathrm{GVBD}}$ (the map $B_2$ in the flip-flop diagram), identified as in \S4.1 above with $\widetilde{\mathcal M_{\operatorname{GPB}}}$. As established in \S2, $\pi^{-1}(\overline{M^1}) = \overline{D_1}\cup\overline{D_2}\cup E_1\cup E_2$ has \emph{four} irreducible components, each smooth of dimension $n-1$, meeting in the connected chain $E_1\cup\overline{D_1}\cup\overline{D_2}\cup E_2$: the three consecutive intersections $E_1\cap\overline{D_1}$, $\overline{D_1}\cap\overline{D_2}$, and $\overline{D_2}\cap E_2$ are each isomorphic to $M^2$ (via the identification with $\operatorname{PBun}_{2,1}(\tilde X_0;x^+,x^-)$ from \S2), while the remaining pairs are disjoint, $E_1\cap E_2=\varnothing$, $E_1\cap\overline{D_2}=\varnothing$, $\overline{D_1}\cap E_2=\varnothing$; all of $\pi^{-1}(\overline{M^1})$ is identified with $\overline{M^1}$, and each of the three overlap loci with $M^2$, by $\pi$.

\subsubsection{$\pi$ is small, so $IH^*(\mathcal M_{X_0})$ needs no correction terms}

Recall that a proper, surjective, generically finite morphism $f\colon Y\to X$ is \emph{small} \cite{GoreskyMacPherson1983} if
\[
\dim\{x\in X : \dim f^{-1}(x)\ge r\} < \dim X - 2r \qquad \text{for every } r\ge 1.
\]

Since $\pi\colon Y\to \mathcal M_{X_0}$ is the normalization map, every fibre of $\pi$ is $0$-dimensional, so $\{x : \dim\pi^{-1}(x)\ge 1\} = \emptyset$ for every $r\ge 1$, and the smallness inequality holds vacuously. (This uses only that $\pi$ is generically finite and proper  the count of irreducible components in $\pi^{-1}(\overline{M^1})$ above plays no role here.)

We recall the following theorem. 

\begin{theorem} \cite[\S 6.2 Small maps and Resolutions]{GoreskyMacPherson1983}\label{thm:small}
There is a canonical isomorphism $R\pi_*IC_Y \cong IC_{\mathcal M_{X_0}}$ in $D^b_c(\mathcal M_{X_0},\mathbb Q)$, with no other summands. Since $Y$ is smooth, $IC_Y = \mathbb{Q}_Y[n]$, so $R\pi_*\mathbb{Q}_Y[n] \cong IC_{\mathcal M_{X_0}}$.
\end{theorem}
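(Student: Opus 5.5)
The plan is to deduce the statement from the general Goresky--MacPherson principle that small proper maps from smooth (or rationally smooth) spaces compute intersection cohomology. Set $X:=\mathcal M_{X_0}$, so that $\pi\colon Y\to X$ is the normalization and $n=\dim X$. First we check the hypotheses. The map $\pi$ is proper, since $Y$ is projective, being an iterated blow-up of the Grassmannian bundle over $\mathcal M_{\operatorname{Bun}}$. It is surjective and birational, and since it is finite its fibres are zero-dimensional, which makes the smallness condition hold vacuously, as recorded just above. Smoothness of $Y$ comes from \S2: $Y=\widetilde{\mathcal M_{\operatorname{GPB}}}$ is obtained from a smooth Grassmannian bundle over the smooth fine moduli space by blowing up the smooth centres $W_1\cong W_2\cong\mathcal M_{\operatorname{Bun}}$. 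Hence $\mathbb Q_Y[n]$ is perverse and self-dual, and $IC_Y=\mathbb Q_Y[n]$.

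Next we show directly that $K:=R\pi_*\mathbb Q_Y[n]$ satisfies the characterizing axioms of $IC_X$ for the Whitney stratification $\overline{M^0}\supset\overline{M^1}\supset M^2$. Because $\pi$ is finite, $R\pi_*=\pi_*$ is exact and $t$-exact, and it commutes with Verdier duality (since $\pi_!=\pi_*$). So $K$ is a self-dual perverse sheaf. Over the open dense stratum $M^0$, where $\pi$ is an isomorphism, $K$ restricts to $\mathbb Q[n]$. It then remains to verify the strict support conditions: for each stratum $S$ of codimension $c\ge1$ we need $\mathcal H^{i}(K)|_S=0$ for $i\ge -n+c$, and the dual condition follows from self-duality. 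Since $K$ is concentrated in degree $-n$, this reduces to the inequality $-n<-n+c$, which holds trivially for $c\ge1$. It follows that $K\cong IC_X$, and by the uniqueness of the intermediate extension $j_{!*}$ the isomorphism is canonical. Finally, $K$ is simple as an object of the category of perverse sheaves, because $Y$ is irreducible: $\pi_*$ of the simple perverse sheaf $\mathbb Q_Y[n]$ under a birational finite map is again $j_{!*}\mathbb Q[n]$. Therefore no other summands occur.

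As an alternative route, one could invoke the Decomposition Theorem (BBD) and observe that the support condition kills every summand supported on $\overline{M^1}$ or $M^2$. The direct argument above avoids this machinery entirely.

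The main obstacle is not the sheaf theory but the identification it relies on: that the normalization of $\mathcal M_{X_0}$ is indeed the smooth space $\widetilde{\mathcal M_{\operatorname{GPB}}}$, i.e.\ the Theorem of \S2 together with the Kausz/Sun identification of $Y=\mathcal M^{\alpha=1}_{\mathrm{GVBD}}$. In particular, one has to make sure that the blow-up centres are smooth, so that $Y$ is genuinely smooth and not merely normal. If only rational smoothness were available, I would instead argue that the blow-up centres and $Y$ have quotient singularities at worst, which suffices for $IC_Y=\mathbb Q_Y[n]$. Given that input, the corollary $IH^*(\mathcal M_{X_0})\cong H^{*}(Y)$ follows by taking hypercohomology.
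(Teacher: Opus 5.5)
Your proposal is correct and follows the paper's route. The normalization $\pi$ is finite, so the Goresky--MacPherson smallness condition holds vacuously, and smoothness of $Y$ gives $IC_Y=\mathbb Q_Y[n]$. The only difference is that the paper simply cites Goresky--MacPherson \S6.2 at this point, whereas you verify the support and cosupport conditions for $\pi_*\mathbb Q_Y[n]$ directly, using that a finite pushforward is $t$-exact and commutes with Verdier duality; this is just the Goresky--MacPherson argument specialized to finite birational maps.
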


\begin{corollary}[Intersection Poincar\'e polynomial of $\mathcal M_{X_0}$]\label{cor:main}
\[
P_{IH}(\mathcal M_{X_0}, t) = P(Y, t),
\]
with no correction terms involving $\overline{D_1}$, $\overline{D_2}$, $E_1$, $E_2$, or $M^2$.
\end{corollary}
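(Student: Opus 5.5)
The corollary should follow from Theorem~\ref{thm:small} by taking hypercohomology. Since $\pi\colon Y\to\mathcal M_{X_0}$ is proper, $R\Gamma(\mathcal M_{X_0},R\pi_*\mathbb Q_Y[n])\cong R\Gamma(Y,\mathbb Q_Y[n])$. Combined with the isomorphism $R\pi_*\mathbb Q_Y[n]\cong IC_{\mathcal M_{X_0}}$, this gives $IH^{i}(\mathcal M_{X_0})=\mathbb H^{i-n}(\mathcal M_{X_0},IC_{\mathcal M_{X_0}})\cong H^{i}(Y,\mathbb Q)$ for every $i$. The intersection Poincar\'e polynomial is the generating function of $\dim IH^i$, so $P_{IH}(\mathcal M_{X_0},t)=\sum_i \dim H^i(Y,\mathbb Q)\,t^i=P(Y,t)$.

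The steps, in order, are as follows.
\begin{enumerate}
\item Confirm the hypotheses of Theorem~\ref{thm:small}. $\pi$ is finite, hence proper, and birational, since a normalization is an isomorphism over the normal locus, which contains $M^0$. Finite fibres make the smallness condition vacuous, as noted in the text.
\item Use that $Y\cong\widetilde{\mathcal M_{\operatorname{GPB}}}$ is smooth. This comes from the two-step blow-up of the smooth Grassmannian bundle $\operatorname{Gr}_2(\mathbb E_{x^+}\oplus\mathbb E_{x^-})$ over the smooth $\mathcal M_{\operatorname{Bun}}$ along the smooth centres $W_1,W_2\cong\mathcal M_{\operatorname{Bun}}$. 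Hence $IC_Y=\mathbb Q_Y[n]$.
\item Take global hypercohomology of the isomorphism from Theorem~\ref{thm:small} and read off dimensions.
\end{enumerate}

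The absence of correction terms is automatic. The decomposition theorem for $R\pi_*\mathbb Q_Y[n]$ could in principle contribute summands $IC_{\overline{M^1}}(\mathcal L)$ or $IC_{M^2}(\mathcal L')$. Smallness excludes these, because the isomorphism in Theorem~\ref{thm:small} has no other summands. This holds even though the local system $\pi_*\mathbb Q$ on $M^1$ has rank equal to the number of preimages, which is nontrivial here. That local system is absorbed into $IC_{\mathcal M_{X_0}}$ itself: the stalks of $IC$ at non-unibranch points reflect the several branches.

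The main obstacle is not the formal deduction but its input: identifying $Y$ with the normalization and knowing that $Y$ is smooth and proper over $\mathcal M_{X_0}$. Both are taken from the earlier theorem that $\widetilde{\mathcal M_{\operatorname{GPB}}}$ is the normalization and from Kausz's description of $KGL_2$. I would only double-check that the blow-up centres are smooth of the right codimension, so that $Y$ is genuinely smooth rather than merely normal. With that, computing $P(Y,t)$ explicitly, via the projective-bundle and blow-up formulas over $\mathcal M_{\tilde X_0}(2,1)$, becomes a separate calculation not needed for this corollary.
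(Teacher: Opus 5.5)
Your proposal is correct and is essentially the paper's argument. The finite normalization map is vacuously small, so Theorem~\ref{thm:small} together with smoothness of $Y$ gives $R\pi_*\mathbb Q_Y[n]\cong IC_{\mathcal M_{X_0}}$, and taking hypercohomology yields $IH^i(\mathcal M_{X_0})\cong H^i(Y,\mathbb Q)$; the paper leaves this hypercohomology step implicit, and you have written it out.
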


\begin{proposition}
Let $\mathcal M_{X_0}$ be the Gieseker moduli space of stable rank-2, degree-1 bundles on the irreducible nodal curve $X_0$, and write $g:=g(X_0)$ for its arithmetic genus, so $g(\tilde X_0)=g-1$. Let $Y = \mathcal M^{\alpha=1}_{\mathrm{GVBD}}$ be its smooth resolution via the top GIT chamber, with $\pi\colon Y\to \mathcal M_{X_0}$ the normalisation morphism. Then, for all perversities,
\[
P_{IH}(\mathcal M_{X_0}, t) = P(Y,t) = P(\mathcal M_{\mathrm{VB}},t)\cdot P(\mathrm{KGL}_2,t) + P(\Delta_2,t) - P(\Delta_1,t),
\]
with $P(\mathcal M_{\mathrm{VB}},t)$, $P(\mathrm{KGL}_2,t)$, $P(\Delta_1,t)$, $P(\Delta_2,t)$ as computed below.
\end{proposition}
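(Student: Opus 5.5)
The first equality $P_{IH}(\mathcal M_{X_0},t)=P(Y,t)$ is already \cref{cor:main}. It comes from \cref{thm:small}, since $\pi$ is finite and hence small, and $Y$ is smooth, so $IC_Y=\mathbb Q_Y[n]$; none of this depends on the perversity. What remains is to compute the ordinary Poincar\'e polynomial of the smooth projective variety $Y=\mathcal M^{\alpha=1}_{\mathrm{GVBD}}$. Because $Y$ is smooth and projective, its cohomology is pure, so $P(Y,t)$ agrees with its virtual Poincar\'e polynomial up to the sign convention $t\mapsto -t$ on odd degrees. I will therefore compute additively in $K_0(\mathrm{Var})$, or equivalently via the motivic classes of the variation-of-GIT chambers, and read off $P(Y,t)$ at the end.

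The plan runs along the flip-flop diagram for $\alpha\in[0,1]$.

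\textbf{Step 1 (the $\alpha=0^+$ chamber).} At $\alpha=0^+$ only stability of $E$ matters. By Kausz's structure theorem, $\mathrm{GVBD}$ is a Zariski-locally trivial $\mathrm{KGL}_2$-fibration over $\mathrm{Bun}_{\tilde X_0}(2,1)$, so the $\alpha=0^+$ coarse space is a Zariski-locally trivial $\mathrm{KGL}_2$-fibration over $\mathcal M_{\mathrm{VB}}=\mathcal M_{\tilde X_0}(2,1)$. Multiplicativity then gives $P(\mathcal M^{0}_{\mathrm{GVBD}},t)=P(\mathcal M_{\mathrm{VB}},t)\,P(\mathrm{KGL}_2,t)$.

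\textbf{Step 2 (the wall-crossings).} By \cref{def:alpha-stab}, the only walls in $(0,1]$ are where a line subbundle $F$ with $\deg F+\alpha\dim S_F=\tfrac12(1+2\alpha)$ destabilizes. This happens at finitely many $\alpha$, and the relevant $F$ have degree $0$ with $\dim S_F=1$. Following Thaddeus-style VGIT for coprime rank $2$, as in Sun, at each wall a locus $\Delta_1$ of pairs $(E,S)$ destabilized from one side is replaced by a locus $\Delta_2$ destabilized from the other side. These are projective bundles of extension or Hom data over products of Jacobians of $\tilde X_0$ and flag data at $x^\pm$. The scissor relation then gives $[Y]=[\mathcal M^0]+[\Delta_2]-[\Delta_1]$, summed over walls. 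I expect a single wall for rank $2$, degree $1$, which matches the two-term correction in the statement. I will identify $\Delta_1$ and $\Delta_2$ concretely, as $\mathbb P^{a}$- and $\mathbb P^{b}$-bundles over $\mathrm{Pic}^0(\tilde X_0)\times\mathrm{Pic}^1(\tilde X_0)$ twisted by the appropriate choices of lines in $E_{x^\pm}$, and compute the fibre dimensions $a,b$ by Riemann--Roch on $\tilde X_0$ of genus $g-1$.

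\textbf{Step 3 (evaluation).} To evaluate the pieces, I will use the Atiyah--Bott/Harder--Narasimhan formula $P(\mathcal M_{\mathrm{VB}},t)=(1+t)^{2(g-1)}\bigl((1+t^3)^{2(g-1)}-t^{2(g-1)}(1+t)^{2(g-1)}\bigr)/\bigl((1-t^2)(1-t^4)\bigr)$. For $P(\mathrm{KGL}_2,t)$, I will start from $\mathrm{Gr}(2,4)$, which has $1+t^2+2t^4+t^6+t^8$, and add the two blow-ups along the vertices, each of which contributes $P(\mathbb P^3)-1$. This gives $1+3t^2+4t^4+3t^6+t^8$, in agreement with the closed-form theorem in the introduction. $P(\Delta_i,t)$ is a product of Jacobian factors $(1+t)^{2(g-1)}$ with projective-space factors.

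The main obstacle is Step 2: pinning down the exact wall loci $\Delta_1,\Delta_2$ and their fibre dimensions, and making sure that no further walls, or strictly semistable points at the walls, contribute. If the bookkeeping is right, $P(\Delta_2)-P(\Delta_1)$ should reduce to $(t^{2(g-1)}+t^{2g})(1+t)^{4(g-1)}$. I would check this against the $g=2$ case and against Poincar\'e duality of degree $2\dim Y$ as sanity tests.
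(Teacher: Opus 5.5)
Your skeleton is the paper's. Smallness of the finite map $\pi$ gives $P_{IH}=P(Y)$. The $\alpha=0$ chamber of $\mathrm{GVBD}$ is a $\mathrm{KGL}_2$-fibration over $\mathcal M_{\mathrm{VB}}$, with $P(\mathrm{KGL}_2,t)=P(\mathrm{Gr}(2,4),t)+2(P(\mathbb P^3,t)-1)$. One wall-crossing then contributes $P(\Delta_2,t)-P(\Delta_1,t)$. The paper does not derive the wall data: it imports from the ICTP notes and Gieseker that $\operatorname{Cent}(B_i)\cap\Delta_i=\emptyset$, and that $\Delta_1,\Delta_2$ are projectivizations of bundles of ranks $g-1$ and $g+1$ over $J_0\times J_1$.

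Where you try to derive this yourself (Step 2), the setup is wrong, and this is exactly the step that produces the correction term. For $\dim S_F=1$ the inequality of \cref{def:alpha-stab} reads $\deg F+\alpha<\tfrac12+\alpha$. The $\alpha$'s cancel, so such $F$ never create a wall; following your plan literally you would find no wall at all. A wall requires $\alpha(\dim S_F-1)=\tfrac12-\deg F$. In $(0,1]$ this happens only at $\alpha=\tfrac12$, with $(\deg F,\dim S_F)=(0,2)$ destabilizing on one side and $(1,0)$ on the other. Moreover, the flip loci carry no flag data at $x^\pm$. Any factor recording lines in $E_{x^\pm}$ would multiply $P(\Delta_i,t)$ by powers of $1+t^2$, and you could not reach $(t^{2(g-1)}+t^{2g})(1+t)^{4(g-1)}$.

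\textbf{The correct flip loci.} $\Delta_1\subset\mathcal M^{\alpha=0}_{\mathrm{GPB}}$ is the locus of $(E,S)$ with $E$ a nonsplit extension $0\to F\to E\to G\to 0$, $F\in J_0$, $G\in J_1$, and $S=F_{x^+}\oplus F_{x^-}$. Here $S$ is forced by $F$, and the fibre is $\mathbb P H^1(G^{-1}F)\cong\mathbb P^{g-2}$.

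$\Delta_2\subset\mathcal M^{\alpha=1}_{\mathrm{GPB}}$ consists of $(E,S)$ with a subbundle $G\in J_1$, $S\cap(G_{x^+}\oplus G_{x^-})=0$, and quotient $F\in J_0$. Its fibre is $\mathbb P\operatorname{Ext}^1_{\mathrm{GPB}}\bigl((F,F_{x^+}\oplus F_{x^-}),(G,0)\bigr)$, of dimension $-\chi(F^{-1}G)+\dim\operatorname{Hom}(F_{x^+}\oplus F_{x^-},G_{x^+}\oplus G_{x^-})=(g-3)+4=g+1$. The GPB-Hom vanishes because a map $F\to G$ killing both fibres is a section of a line bundle of degree $-1$. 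So Riemann--Roch on $\tilde X_0$ alone, as you planned, gives the wrong rank; the extra $4$ comes from the choice of $S$.

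\textbf{Transport to $\mathrm{GVBD}$.} \cref{def:alpha-stab} is a stability condition on GPBs, whereas $Y=\mathcal M^{\alpha=1}_{\mathrm{GVBD}}$. To move the wall-crossing over to $\mathrm{GVBD}$, you need the blow-up centres $W_1,W_2$ to avoid $\Delta_1,\Delta_2$. These centres are the loci $S=E_{x^+}\oplus 0$, resp.\ $0\oplus E_{x^-}$, with $E$ stable. This is the paper's $\operatorname{Cent}(B_i)\cap\Delta_i=\emptyset$, and it follows from the descriptions above: $E_{x^+}\oplus 0$ is never of the form $F_{x^+}\oplus F_{x^-}$, and it always meets $G_{x^+}\oplus G_{x^-}$ nontrivially. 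With these inputs your scissor argument yields the stated formula.
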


\subsection{Poincar\'e polynomial of $Y$, via variation of GIT}

We compute $P(Y,t)$ for the moduli spaces of generalized parabolic bundles (GPBs) and Gieseker vector bundles (GVBs) of rank $2$, degree $1$, built from the smooth normalization $\tilde X_0$, which has genus $g-1$ (recall $g:=g(X_0)$).

Semistability for these objects depends on a real parameter $\alpha\in[0,1]$; as $\alpha$ varies, the coarse moduli spaces undergo a sequence of birational transformations (flip-flops): (for details refer to \cite[Theorem 6.2]{ICTP}, we will borrow the notations from this note)
with $Y := \mathcal{M}^{\alpha=1}_{\mathrm{GVBD}}$. We use the following facts (see \cite{ICTP, Gieseker1984}):

\begin{itemize}
\item $\operatorname{Cent}(B_i)\cap\Delta_i = \emptyset$ for $i=1,2$, where $\operatorname{Cent}(B_i)$ denotes the center of the blow-up $B_i$.
\item Writing $J_0,J_1$ for the Picard varieties of degree $0$ and degree $1$ on $\tilde X_0$, there exist vector bundles $F_1,F_2$ over $J_0\times J_1$ of ranks $g-1$ and $g+1$ respectively, with $\mathbb{P}(F_i)\cong \Delta_i$.
\item The blow-up loci and exceptional divisors fit into
\[
\begin{tikzcd}
& E := \Delta_1 \times \Delta_2 \\
\Delta_1 \arrow[ur] & & \Delta_2 \arrow[ul]
\end{tikzcd}
\]
\end{itemize}

\subsubsection{Explicit Poincar\'e polynomials}

The flip-flop diagram gives the recursive relations
\[
P(\mathcal{M}^{\alpha=1}_{\mathrm{GPB}},t) = P(\mathcal{M}^{\alpha=0}_{\mathrm{GPB}},t) + P(\Delta_2,t) - P(\Delta_1,t), \qquad
P(Y,t) = P(\mathcal{M}^{\alpha=0}_{\mathrm{GVBD}},t) + P(\Delta_2,t) - P(\Delta_1,t),
\]
where the open-stratum spaces fiber over the moduli space of vector bundles:
\[
P(\mathcal{M}^{\alpha=0}_{\mathrm{GPB}},t) = P(\mathcal{M}_{\mathrm{VB}},t)\cdot P(\mathrm{Gr}(2,4),t), \qquad
P(\mathcal{M}^{\alpha=0}_{\mathrm{GVBD}},t) = P(\mathcal{M}_{\mathrm{VB}},t)\cdot P(\mathrm{KGL}_2,t),
\]
with $\mathrm{KGL}_2$ the kernel of the natural map $\mathrm{GL}_2\to \mathrm{Gr}(2,4)$.

The standard Poincar\'e polynomials needed are
\[
P(\mathbb{P}^n,t) = 1+t^2+\cdots+t^{2n}, \qquad
P(\mathrm{Gr}(2,4),t) = 1+t^2+2t^4+t^6+t^8,
\]
\[
P(\mathrm{KGL}_2,t) = P(\mathrm{Gr}(2,4),t) + 2P(\mathbb{P}^3,t) - 2P(\mathrm{pt},t) = 1+3t^2+4t^4+3t^6+t^8.
\]
The Poincar\'e polynomial of the moduli space of (varying-determinant) stable rank-2 bundles on the genus-$(g-1)$ curve $\tilde X_0$ is
\[
P(\mathcal{M}_{\mathrm{VB}},t) = \frac{(1+t)^{2(g-1)}\bigl((1+t^3)^{2(g-1)} - t^{2(g-1)}(1+t)^{2(g-1)}\bigr)}{(1-t^2)(1-t^4)},
\]
and the Picard varieties satisfy $P(J_0,t) = P(J_1,t) = (1+t)^{2(g-1)}$.

Using $\mathbb{P}(F_1)\cong\Delta_1$, $\mathbb{P}(F_2)\cong\Delta_2$ of relative dimensions $g-2$ and $g$ over $J_0\times J_1$:
\[
P(\Delta_1,t) = \Bigl(\sum_{k=0}^{g-2}t^{2k}\Bigr)(1+t)^{4(g-1)}, \qquad
P(\Delta_2,t) = \Bigl(\sum_{k=0}^{g}t^{2k}\Bigr)(1+t)^{4(g-1)}.
\]

Combining these,
\begin{multline*}
P(Y,t) = \frac{(1+t)^{2(g-1)}\bigl((1+t^3)^{2(g-1)}-t^{2(g-1)}(1+t)^{2(g-1)}\bigr)}{(1-t^2)(1-t^4)}\cdot\bigl(1+3t^2+4t^4+3t^6+t^8\bigr) \\
+ \Bigl(\sum_{k=0}^{g}t^{2k}\Bigr)(1+t)^{4(g-1)} - \Bigl(\sum_{k=0}^{g-2}t^{2k}\Bigr)(1+t)^{4(g-1)}.
\end{multline*}

\begin{theorem}[Intersection Poincar\'e Polynomial of $\mathcal M_{X_0}$, closed form]\label{thm:IH-closed-form}
Let $\mathcal M_{X_0}$ be the Gieseker moduli space of stable rank-2, degree-1 bundles on the irreducible nodal curve $X_0$, and write $g := g(X_0)$ for its arithmetic genus, so that $g(\tilde X_0) = g-1$. Then
\[
\resizebox{0.98\linewidth}{!}{$
\displaystyle
P_{IH}(\mathcal M_{X_0}, t)
= \frac{(1+t)^{2(g-1)}\bigl((1+t^3)^{2(g-1)}-t^{2(g-1)}(1+t)^{2(g-1)}\bigr)}{(1-t^2)(1-t^4)}
\times \bigl(1+3t^2+4t^4+3t^6+t^8\bigr)
+ \bigl(t^{2(g-1)}+t^{2g}\bigr)(1+t)^{4(g-1)}
$}
\]
\end{theorem}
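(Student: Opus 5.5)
The plan is to assemble the theorem from three results already established: Corollary~\ref{cor:main}, the proposition giving $P(Y,t)$, and the explicit Poincar\'e polynomials of the pieces. Since the normalisation $\pi\colon Y\to\mathcal M_{X_0}$ is finite, Theorem~\ref{thm:small} applies. It gives $R\pi_*\mathbb Q_Y[n]\cong IC_{\mathcal M_{X_0}}$. Taking hypercohomology yields $IH^*(\mathcal M_{X_0})\cong H^*(Y)$ degree by degree, so $P_{IH}(\mathcal M_{X_0},t)=P(Y,t)$ with no correction terms. The whole proof therefore reduces to computing $P(Y,t)$ in closed form.

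For $P(Y,t)$ I will use the variation-of-GIT recursion from the flip-flop diagram:
\[
P(Y,t)=P(\mathcal M^{\alpha=0}_{\mathrm{GVBD}},t)+P(\Delta_2,t)-P(\Delta_1,t).
\]
The first term factors as $P(\mathcal M_{\mathrm{VB}},t)\cdot P(\mathrm{KGL}_2,t)$, because $\mathcal M^{\alpha=0}_{\mathrm{GVBD}}$ is a Zariski-locally trivial $\mathrm{KGL}_2$-fibration over $\mathcal M_{\mathrm{VB}}$ (Kausz). Its cohomology is then the tensor product by Leray--Hirsch, since $\mathrm{KGL}_2$ has only algebraic cohomology. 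I then substitute the two known inputs:
\begin{itemize}
\item the Harder--Narasimhan/Atiyah--Bott formula for $P(\mathcal M_{\mathrm{VB}},t)$ on the genus-$(g-1)$ curve $\tilde X_0$;
\item $P(\mathrm{KGL}_2,t)=1+3t^2+4t^4+3t^6+t^8$, obtained from $\mathrm{Gr}(2,4)$ by two blow-ups along copies of a point, each replaced by a $\mathbb P^3$.
\end{itemize}
This produces exactly the first summand of the stated formula.

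For the correction term I use $\Delta_i\cong\mathbb P(F_i)$ over $J_0\times J_1$, with $\operatorname{rk}F_1=g-1$ and $\operatorname{rk}F_2=g+1$. Projective-bundle multiplicativity gives $P(\Delta_i,t)=P(\mathbb P^{\operatorname{rk}F_i-1},t)\,(1+t)^{4(g-1)}$. Hence
\[
P(\Delta_2,t)-P(\Delta_1,t)=\Bigl(\sum_{k=0}^{g}t^{2k}-\sum_{k=0}^{g-2}t^{2k}\Bigr)(1+t)^{4(g-1)}=\bigl(t^{2(g-1)}+t^{2g}\bigr)(1+t)^{4(g-1)},
\]
since the telescoping difference leaves only the top two terms, $k=g-1$ and $k=g$. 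Adding this to the first summand gives the closed form.

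The arithmetic is routine. The step I expect to need the most care is justifying the recursion $P(Y)=P(\mathcal M^{\alpha=0})+P(\Delta_2)-P(\Delta_1)$ at the level of genuine (not virtual) Poincar\'e polynomials. The wall-crossing is a standard flip: blow up along $\Delta_1$, then blow down to $\Delta_2$, with common exceptional divisor $\Delta_1\times_{J_0\times J_1}\Delta_2$. The blow-up formula $H^*(\mathrm{Bl}_ZX)=H^*(X)\oplus\bigoplus_{j=1}^{c-1}H^{*-2j}(Z)$ applied on both sides gives $P(\mathcal M^{\alpha=1})=P(\mathcal M^{\alpha=0})+P(\mathbb P^{g})P(J_0\times J_1)-P(\mathbb P^{g-2})P(J_0\times J_1)$. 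Here the codimensions of the two centres are $g+1$ and $g-1$, matching the projective-bundle ranks. I would verify this codimension matching first, using $\dim\mathcal M^{\alpha}=n$ and $\dim\Delta_i$, together with the disjointness $\operatorname{Cent}(B_i)\cap\Delta_i=\emptyset$. That disjointness lets the $\mathrm{KGL}_2$ blow-ups and the GIT flips be carried out independently.
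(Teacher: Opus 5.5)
Your proposal is correct and follows essentially the same route as the paper. Finiteness of the normalisation makes $\pi$ small, so $P_{IH}(\mathcal M_{X_0},t)=P(Y,t)$. Then $P(Y,t)$ is assembled from the flip recursion $P(Y,t)=P(\mathcal M^{\alpha=0}_{\mathrm{GVBD}},t)+P(\Delta_2,t)-P(\Delta_1,t)$, the $\mathrm{KGL}_2$-fibration over $\mathcal M_{\mathrm{VB}}$, and the projective-bundle structure of $\Delta_1,\Delta_2$, whose telescoping difference is $\bigl(t^{2(g-1)}+t^{2g}\bigr)(1+t)^{4(g-1)}$. What you add is a justification of that recursion for genuine Betti numbers: the blow-up formula on both sides of the flip, with centres of codimension $g+1$ and $g-1$. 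The paper simply reads the recursion off the flip-flop diagram.
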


\nocite{*}
\bibliography{biblio}
\bibliographystyle{amsalpha}
\end{document}